\newtheorem{thm}{Theorem}[section]
\newtheorem{cor}[thm]{Corollary}
\newtheorem{prop}[thm]{Proposition}
\newtheorem*{prob*}{Problem}
\newtheorem*{thm*}{Theorem}
\theoremstyle{definition}
\newtheorem{defn}[thm]{Definition}
\newtheorem{example}[thm]{Example}
\newtheorem*{defn*}{Definition}
\newtheorem{rem}[thm]{Remark}
\newtheorem*{Remarks}{Remarks}
\newtheorem*{rem*}{Remark}
\numberwithin{equation}{section}
\DeclareMathOperator{\Tr}{Tr}
\DeclareMathOperator{\Ind}{Ind}
\DeclareMathOperator{\SYM}{SYM}
\DeclareMathOperator{\Res}{Res}
\DeclareMathOperator{\R}{\mathbb{R}}
\DeclareMathOperator{\Y}{\mathbb{Y}}
\DeclareMathOperator{\C}{\mathbb{C}}
\DeclareMathOperator{\diag}{diag}
\DeclareMathOperator{\DIM}{DIM}
\DeclareMathOperator{\Ewens}{Ewens}
\DeclareMathOperator{\Reg}{Reg}
\begin{document}
\title[Probability measures on families of partitions]
{\bf{Probability measures on families of partitions related to harmonic analysis on big wreath products}}

\author{Eugene Strahov}
\address{Department of Mathematics, The Hebrew University of Jerusalem, Givat Ram, Jerusalem 91904, Israel}
\email{strahov@math.huji.ac.il}
\keywords{Representation theory of infinite  wreath products, the infinite symmetric group, harmonic analysis on groups, generalized regular representations,  characters, probability measures on families of partitions.}
\commby{}
\begin{abstract}
We construct generalized regular representations of the wreath product of a compact group with the infinite symmetric group. The characters of these representations are determined by probability measures on families
of partitions called the $z$-measures for the wreath product of a compact group with the symmetric group in the present paper. Our main result is an explicit formula for these $z$-measures which holds true for an arbitrary compact group. The result enables us to describe the spectral measures of the generalized regular representations of big wreath products.
\end{abstract}
\maketitle
\section{Introduction}
\subsection{Formulation of the problem and description of main results}
Let $S_{\infty}$ be the infinite symmetric group, that is, the group of finite permutations
of $\{1,2,\ldots\}$.
The representation theory of $S_{\infty}$ and other big groups is an active subject of current research, with many connections to different areas of mathematics.
We refer the reader to the book by Borodin and Olshanski \cite{BorodinOlshanskiBook}, to the book by Kerov \cite{KerovDissertation}, and to the lectures by Hora \cite{HoraLectures} for an introduction
to the representation theory of $S_{\infty}$ and descriptions of some of its applications.

The harmonic analysis on $S_{\infty}$ is developed by Kerov, Olshanski and Vershik \cite{KerovOlshanskiVershikAnnouncement, KerovOlshanskiVershik}, and by Borodin and Olshanski;
see Refs. \cite{Borodin1, Borodin2, BorodinOlshanskiLetters}, and the survey papers by Olshanski \cite{OlshanskiPointProcesses, OlshanskiNato}.  The theory is based on the construction
of generalized regular representations of the infinite symmetric group, and on the description
of the characters of these representations in terms of certain probability measures on partitions
called the $z$-measures for the symmetric group.

A natural direction of research is to extend the results on the representation theory
of the infinite symmetric group to other big groups. An example is the infinite
unitary group; see Olshanski \cite{OlshanskiUnitary}, Borodin, and Olshanski \cite{BorodinOlshanskiUnitary} for different results in this case.
The  papers by  Gorin, Kerov, and  Vershik \cite{GorinKerovVershik}, Cuenca and Olshanski \cite{CuencaOlshanski1}
are devoted to characters and  representations of the group of infinite matrices over a finite field.

Let $G$ be a compact group.  Denote by $S_{\infty}(G)$ the wreath product of $G$ with $S_{\infty}$.
The group $S_{\infty}(G)$ called the big wreath product is an important generalization of the infinite symmetric group $S_{\infty}$.
The works of Hora, Hirai and Hirai
\cite{HoraHiraiHiraiII}, Hirai, Hirai, and Hora \cite{HiraiHiraiHoraI},
Hora and Hirai \cite{HoraHirai} describe the characters of $S_{\infty}(G)$.
The purpose of the present paper is to extend some results of the harmonic analysis of $S_{\infty}$ to $S_{\infty}(G)$. That is, we will construct generalized regular representations
of $S_{\infty}(G)$, and we will derive formulas for the characters of these representations.
We will see that these characters are determined by probability measures on families of partitions.
For these probability measures (called the $z$-measures for the wreath product $S_n(G)$ of a compact group $G$ with the symmetric group $S_n$), we will derive explicit formulae. Then we will use these formulae to describe the spectral measures of the generalized regular representations.
In the case of a finite group $G$, our results are reduced to those obtained in Strahov \cite{StrahovIsr25}.

On the technical level, there is an essential difference between the case where
$G$ is a compact group with an infinite number of elements (like the unitary group $U(n)$) and the case where $G$ is a finite group (considered by Strahov \cite{StrahovIsr25}). Indeed, in the case of a finite $G$
there is an analogue of the Frobenius theory which relates the characters of $S_n(G)$
with the theory of symmetric functions; see Macdonald \cite{Macdonald}, Appendix B.
This relation was used in Strahov \cite{StrahovIsr25} to derive the formulae
for the $z$-measures. If $G$ is not a finite group, an analogue of Frobenius theory is absent for
the characters of $S_n(G)$. To overcome this difficulty, we will use the construction of irreducible representations and formulas for the characters of $S_n(G)$ from the works of Hirai, Hirai, Hora \cite{HiraiHiraiHoraI},
Hora, Hirai, Hirai \cite{HoraHiraiHiraiII} and Hora and Hirai \cite{HoraHirai}.

In the following, we give a short description of the main results obtained in this paper.
\subsubsection{The Ewens measure on a wreath product of a compact group with the symmetric group} There is a natural one-parameter family of distributions $\left(P^{\Ewens}_{n,\theta}\right)_{\theta>0}$ on the symmetric group $S_n$ defined by
\begin{equation}\label{Res.1}
P^{\Ewens}_{n,\theta}(s)=\frac{\theta^{l(s)}}{\theta(\theta+1)\ldots(\theta+n-1)},\; s\in S_n,
\end{equation}
where $l(s)$ denotes the number of cycles in $s$. The distribution
$P^{\Ewens}_{n,\theta}$ is a deformation of the uniform distribution on $S_n$, and it is
called the Ewens measure on $S_n$ with the parameter $\theta>0$.
It is not hard to see that $P_{n,\theta}^{\Ewens}$ is invariant under the action
of $S_n$ on itself by conjugations. Moreover, the Ewens measure is consistent with respect to the natural projection $p_{n,n-1}:\; S_n\longrightarrow S_{n-1}$.
These fundamental properties of $P_{n,\theta}^{\Ewens}$ lead to applications in the representation theory of the infinite symmetric group, and in population genetics.
In particular, $P_{n,\theta}^{\Ewens}$ is closely related to the Ewens sampling formula; see, for example, Kingman \cite{Kingman1,Kingman2}, Tavar$\acute{\mbox{e}}$ \cite{Tavare}.

In the present paper, we introduce the Ewens-type measure $\mu_{S_n(G)}^{\Ewens}$ on the wreath product $S_n(G)$, where $G$ is a compact group, see Definition \ref{Definition2.1}.
The measure $\mu_{S_n(G)}^{\Ewens}$ is determined by a \textit{continuous central function} $z:\; G\longrightarrow\C\setminus\{0\}$, and is a probability measure on $S_n(G)$ invariant under the action of $S_n(G)$ on itself by conjugations.  We define the canonical projection $p_{n,n+1}:\; S_{n+1}(G)\longrightarrow S_n(G)$ that preserves the colors of the cycles.
The fundamental property of $\mu_{S_n(G)}^{\Ewens}$ is its consistency with $p_{n,n+1}$.
If $G$ contains the unit element only, then $\mu_{S_n(G)}^{\Ewens}$
turns into the Ewens measure on the symmetric group $S_n$. If $G$ is a finite group, then
$\mu_{S_n(G)}^{\Ewens}$ leads to a refinement of the Ewens sampling formula described in Strahov \cite{StrahovRefinement}. In this paper, we use $\mu_{S_n(G)}^{\Ewens}$
to construct generalized regular representations of the wreath product $S_{\infty}(G)$
of a compact group $G$ with the infinite symmetric group $S_{\infty}$.
%%%%%%%%%%%%%%%%%%%%%%%%%%%%%%%%%%%%%%%%%%%%%%%%%%%%%%%%%%%%%%%%%%%%%%%%%%%%%%%%%%
%%%%%%%%%%%%%%%%%%%%%%%%%%%%%%%%%%%%%%%%%%%%%%%%%%%%%%%%%%%%%%%%%%%%%%%%%%%%%%%%%%%%%%
%%%%%%%%%%%%%%%%%%%%%%%%%%%%%%%%%%%%%%%%%%%%%%%%%%%%%%%%%%%%%%%%%%%%%%%%%%%%%%%%%%%%%
%%%%%%%%%%%%%%%%%%%%%%%%%%%%%%%%%%%%%%%%%%%%%%%%%%%%%%%%%%%%%%%%%
\subsubsection{The generalized regular representations $T_z$ of $S_{\infty}(G)$.}
%%%%%%%%%%%%%%%%%%%%%%%%%%%%%%%%%%%%%%%%%%%%%%%%%%%%%%%%%%%%%%%%%%%%%%%%%%%%%%%%%%
%%%%%%%%%%%%%%%%%%%%%%%%%%%%%%%%%%%%%%%%%%%%%%%%%%%%%%%%%%%%%%%%%%%%%%%%%%%%%%%%%%%%%%
%%%%%%%%%%%%%%%%%%%%%%%%%%%%%%%%%%%%%%%%%%%%%%%%%%%%%%%%%%%%%%%%%%%%%%%%%%%%%%%%%%%%%
%%%%%%%%%%%%%%%%%%%%%%%%%%%%%%%%%%%%%%%%%%%%%%%%%%%%%%%%%%%%%%%%%
We introduce the space $\mathfrak{S}_G$ of $G$-virtual permutations as a projective limit
$$
S_1(G)\longleftarrow S_2(G)\longleftarrow\ldots\longleftarrow S_n(G)\longleftarrow\ldots
$$
under the fundamental projection $p_{n,n+1}$. Since the family $\left(\mu^{\Ewens}_{S_n(G)}\right)_{n=1}^{\infty}$ is consistent with $p_{n,n+1}$, we obtain a probability space
$\left(\mathfrak{S}_G, \mu_{\mathfrak{S}_G}^{\Ewens}\right)$, where
$\mathfrak{S}_G=\underset{\longleftarrow}{\lim}\;S_n(G)$, and $\mu_{\mathfrak{S}_G}^{\Ewens}
=\underset{\longleftarrow}{\lim}\;\mu^{\Ewens}_{S_n(G)}$. Then we define the action of $S_{\infty}(G)\times S_{\infty}(G)$ on $\mathfrak{S}_G$, and construct a family
$\left(T_z,L^2\left(\mathfrak{S}_G,\mu_{\mathfrak{S}_G}^{\Ewens}\right)\right)$
of representations of $S_{\infty}(G)\times S_{\infty}(G)$ parameterized by the continuous central function $z:\; G\longrightarrow \C\setminus\{0\}$, see Definition \ref{12.2.1}.
If $G$ contains the unit element only, the representations
$\left(T_z,L^2\left(\mathfrak{S}_G,\mu_{\mathfrak{S}_G}^{\Ewens}\right)\right)$
are equivalent to the generalized regular representations of the infinite symmetric group. In the case where $G$ is a finite group, the representations $\left(T_z,L^2\left(\mathfrak{S}_G,\mu_{\mathfrak{S}_G}^{\Ewens}\right)\right)$ were constructed in Strahov \cite{StrahovIsr25}.
%%%%%%%%%%%%%%%%%%%%%%%%%%%%%%%%%%%%%%%%%%%%%%%%%%%%%%%%%%%%%%%%%%%%%%%%%%%%%%%%%%
%%%%%%%%%%%%%%%%%%%%%%%%%%%%%%%%%%%%%%%%%%%%%%%%%%%%%%%%%%%%%%%%%%%%%%%%%%%%%%%%%%%%%%
%%%%%%%%%%%%%%%%%%%%%%%%%%%%%%%%%%%%%%%%%%%%%%%%%%%%%%%%%%%%%%%%%%%%%%%%%%%%%%%%%%%%%
%%%%%%%%%%%%%%%%%%%%%%%%%%%%%%%%%%%%%%%%%%%%%%%%%%%%%%%%%%%%%%%%%
\subsubsection{A formula for the characters of $T_z$. The $z$-measures for the wreath products $S_{n}(G)$}
%%%%%%%%%%%%%%%%%%%%%%%%%%%%%%%%%%%%%%%%%%%%%%%%%%%%%%%%%%%%%%%%%%%%%%%%%%%%%%%%%%
%%%%%%%%%%%%%%%%%%%%%%%%%%%%%%%%%%%%%%%%%%%%%%%%%%%%%%%%%%%%%%%%%%%%%%%%%%%%%%%%%%%%%%
%%%%%%%%%%%%%%%%%%%%%%%%%%%%%%%%%%%%%%%%%%%%%%%%%%%%%%%%%%%%%%%%%%%%%%%%%%%%%%%%%%%%%
%%%%%%%%%%%%%%%%%%%%%%%%%%%%%%%%%%%%%%%%%%%%%%%%%%%%%%%%%%%%%%%%%
The character $\chi_z$ of $T_z$ can be defined as a spherical function of a suitable spherical representation of the Gelfand pair
$\left(S_{\infty}(G)\times S_{\infty}(G),\diag\left(S_{\infty}(G)\right)\right)$, see Section \ref{Section12.4}. In the paper, we give a formula for $\chi_z$ assuming that the set $\widehat{G}$
of equivalence classes of continuous unitary irreducible representations of $G$ is at most countable.
\begin{defn}\label{DefinitionZMES} Let $z: G\longrightarrow\C\setminus\{0\}$ be a continuous central function, and let
$$
\left(T_z,L^2\left(\mathfrak{S}_G,\mu^{\Ewens}_{\mathfrak{S}_G}\right)\right)
$$
be the generalized regular representation of $S_{\infty}(G)\times S_{\infty}(G)$.
Denote by $\chi_z$ the character of $T_z$ as defined in Section
\ref{Section12.4}. Consider the expansion of $\chi_z\vert_{S_n(G)}$
into the sum of normalized irreducible characters,
\begin{equation}\label{13.1.2}
\chi_z\vert_{S_n(G)}(x)=\sum\limits_{\Lambda\in\Y_n(\widehat{G})}
M_z^{(n)}\left(\Lambda\right)\frac{\chi^{\Lambda}(x)}{\DIM\Lambda},
\end{equation}
where $\Y_n\left(\widehat{G}\right)$ is the set of all families of Young diagrams parameterizing
nonequivalent classes of irreducible representations of $S_n(G)$,
and $\DIM\Lambda$ is the dimension of the irreducible representation of $S_n(G)$ parameterized by $\Lambda$. The coefficient $M_z^{(n)}(\Lambda)$ in
this expansion is a probability measure on $\Y_n(\widehat{G})$ determined by the function $z$. This coefficient is called the $z$-measure for the wreath product $S_n(G)$.
\end{defn}
Theorem \ref{Theorem13.2.2} below gives an explicit formula for these $z$-measures.
\begin{thm}\label{Theorem13.2.2}
Assume that an irreducible representation of $S_n(G)$ is parameterized by a
collection $\Lambda$ of Young diagrams
\begin{equation}\label{13.2.2.1}
\Lambda=\left(\Lambda(\zeta)\right)_{\zeta\in\widehat{G}},\;
\sum\limits_{\zeta\in\widehat{G}}|\Lambda(\zeta)|=n.
\end{equation}
Then we have
\begin{equation}\label{13.2.2.2}
M_z^{(n)}\left(\Lambda\right)=\frac{n!}{\left(I\right)_n}
\prod\limits_{\zeta\in\widehat{G}}\;\prod\limits_{\Box\in\Lambda(\zeta)}
\frac{\left(\alpha(\zeta)+c(\Box)\right)\left(\overline{\alpha(\zeta)}+c(\Box)\right)}{h^2(\Box)},
\end{equation}
where $I=\left<z,z\right>_G$, $(I)_n=I(I+1)\ldots (I+n-1)$.
The function $\alpha : \widehat{G}\longrightarrow\C$ is defined by
\begin{equation}\label{13.2.2.3}
\alpha(\zeta)=\left<z,\chi^{\pi^\zeta}\right>_G,
\end{equation}
where $\chi^{\pi^{\zeta}}$ denotes the character of the irreducible representation $\pi^{\zeta}$ of $G$ parameterized by $\zeta\in\widehat{G}$. The inner product  $\left<.,.\right>_G$ is defined by
\begin{equation}
\left<\varphi,\psi\right>_G=\int\limits_G\varphi(g)\overline{\psi(g)}d\mu_G(g),
\end{equation}
where $\mu_G$ is the normalized Haar measure on the group $G$.

For a box $\Box=(i,j)$ of a Young diagram $\lambda=(\lambda_1,\lambda_2,\ldots)$
located in row $i$ and column $j$ we have used notation
$$
c(\Box)=j-i,\;\; h(\Box)=\lambda_i-i+\lambda_j'-j+1,
$$
where $\lambda'$ stands for the transposed diagram.
\end{thm}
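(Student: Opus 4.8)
\emph{Plan of proof.} The plan is to start from the defining expansion \eqref{13.1.2} and invert it by Schur orthogonality. Since $G$ is compact, $S_n(G)$ is a compact group and the irreducible characters $\chi^{\Lambda}$, $\Lambda\in\Y_n(\widehat{G})$, are orthonormal with respect to the normalized Haar measure on $S_n(G)$. Pairing \eqref{13.1.2} with $\chi^{\Lambda}$ therefore gives
\begin{equation*}
M_z^{(n)}(\Lambda)=\DIM\Lambda\cdot\int\limits_{S_n(G)}\chi_z\vert_{S_n(G)}(x)\,\overline{\chi^{\Lambda}(x)}\,dx .
\end{equation*}
Thus the whole computation reduces to two inputs: an explicit class-function formula for $\chi_z\vert_{S_n(G)}$, and the explicit irreducible characters $\chi^{\Lambda}$.

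For the first input I would use the description of $\chi_z$ as the spherical function of the Gelfand pair of Section \ref{Section12.4}, i.e.\ the matrix coefficient $\chi_z(x)=\left(T_z(x)\mathbf 1,\mathbf 1\right)$ taken in $L^2\!\left(\mathfrak S_G,\mu^{\Ewens}_{\mathfrak S_G}\right)$, and evaluate it against the Ewens measure of Definition \ref{Definition2.1}. I expect this to produce a class function that factorizes over the cycles of the underlying permutation, the factor attached to a cycle depending only on its length and on its cycle-product (a conjugacy class of $G$) through the central function $z$, with overall normalization $(I)_n^{-1}$. Because the spherical function is a matrix coefficient, it is sesquilinear in the data carried by $z$; this is the structural source of the two factors $\alpha(\zeta)+c(\Box)$ and $\overline{\alpha(\zeta)}+c(\Box)$ in \eqref{13.2.2.2}. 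I would pin the formula down, including normalizations, by the two consistency checks already available: it must reduce to the classical spherical function of the generalized regular representation of $S_\infty$ when $G=\{e\}$, and to the finite-group formula of \cite{StrahovIsr25} when $G$ is finite.

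For the second input, since $G$ is infinite there is no Frobenius theory for $S_n(G)$, so I would invoke the construction of the irreducible representations and the character formula of Hirai, Hirai and Hora \cite{HiraiHiraiHoraI,HoraHiraiHiraiII,HoraHirai}, the analogue for compact $G$ of Macdonald's wreath-product characteristic map \cite{Macdonald}. This encodes $\chi^{\Lambda}$ through products $\prod_{\zeta}s_{\Lambda(\zeta)}$ of Schur functions in power-sum variables built from the irreducible characters $\chi^{\pi^{\zeta}}$ of $G$ evaluated on cycle-products. Substituting both inputs, the integral over $S_n(G)$ becomes a sum over conjugacy-class data (cycle lengths together with cycle-product conjugacy classes). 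Expanding $z=\sum_{\zeta}\alpha(\zeta)\chi^{\pi^{\zeta}}$ in the orthonormal basis of irreducible characters, with $\alpha(\zeta)=\langle z,\chi^{\pi^{\zeta}}\rangle_G$ and $I=\langle z,z\rangle_G=\sum_{\zeta}|\alpha(\zeta)|^2$, the orthogonality of the characters of $G$ forces the computation to factorize over the colors $\zeta$: the contribution of color $\zeta$ is a one-diagram computation for $\Lambda(\zeta)$ with the single complex parameter $\alpha(\zeta)$. For each color this is exactly the symmetric-function identity underlying the classical $z$-measure, which collapses the cycle sum to the content--hook product $\prod_{\Box\in\Lambda(\zeta)}\left(\alpha(\zeta)+c(\Box)\right)\left(\overline{\alpha(\zeta)}+c(\Box)\right)/h^2(\Box)$. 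Reassembling the per-color factors together with the multinomial count for distributing the $n$ boxes among the colors and the global normalization $n!/(I)_n$ yields \eqref{13.2.2.2}.

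The main obstacle is twofold. First, establishing the precise cycle-factorized formula for $\chi_z\vert_{S_n(G)}$: the absence of Frobenius theory means the spherical-function computation must be carried out through the Hirai--Hirai--Hora machinery, and the correct joint appearance of $z$ and $\overline z$ must be extracted from the cocycle and the Hermitian pairing defining $T_z$. Second, the symmetric-function step producing the content--hook products per color, where the delicate points are the clean factorization over the \emph{infinite} index set $\widehat G$ and its convergence, both controlled by the finiteness of $I=\sum_{\zeta}|\alpha(\zeta)|^2$. The single-color identity is the compact-group incarnation of the classical computation; the genuinely new content is showing that the full computation decouples over $\widehat G$ so that each color reduces to it.
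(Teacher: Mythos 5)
Your overall architecture --- extract $M_z^{(n)}(\Lambda)$ from \eqref{13.1.2} by orthogonality of the irreducible characters of $S_n(G)$, describe $\chi^{\Lambda}$ via the Hirai--Hirai--Hora construction, and reduce each color $\zeta\in\widehat{G}$ to a content--hook specialization of a Schur function --- is the same as the paper's (Sections \ref{SECTIONGENERALIZEDREGULARREPRESENTATIONS}--\ref{SECTIONPROOFTHEOREMCH}). But your first input contains a genuine gap that would derail the computation. You expect $\chi_z\vert_{S_n(G)}$ to ``factorize over the cycles of the underlying permutation.'' It does not: normalized characters that are multiplicative over cycles are precisely the extreme (Thoma-type) characters in the Hora--Hirai classification, whereas $\chi_z$ is a nontrivial mixture of extreme characters --- that is exactly the content of the spectral decomposition \eqref{ZcharacterRepresentation}. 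What is multiplicative over cycles is the central function $\varphi_z(x)=\prod_{c\in[G]}(z(c))^{[x](c)}$ of \eqref{ch1.1.1}; the character itself is, up to the factor $n!/(I)_n$, the \emph{convolution} of $\varphi_z$ with $\overline{\varphi_z}$, see \eqref{12.6.6}. The paper obtains this by intertwining $T_z$ with the biregular representation through the multiplication operator $F_z^{(n)}$ (Proposition \ref{Prop12.6.3}); orthogonality of the $\chi^{\Lambda}$ then yields \eqref{12.6.9}, equivalently \eqref{13.2.1}:
\begin{equation*}
M_z^{(n)}(\Lambda)=\frac{n!}{(I)_n}\,\Bigl|\left<\varphi_z,\chi^{\Lambda}\right>_{S_n(G)}\Bigr|^2 .
\end{equation*}
So the sesquilinearity you correctly anticipate enters as the squared modulus of a \emph{single} Fourier coefficient of a cycle-multiplicative function, not as a doubled factor inside a cycle-multiplicative character. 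Your fallback --- pinning the formula down by the consistency checks $G=\{e\}$ and $G$ finite --- cannot repair this: such checks can falsify a multiplicative ansatz (and would, already for finite $G$ and $n\geq 4$), but they cannot produce the convolution structure, which is the actual key step.

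Once that identity is in place, the remainder of your plan does coincide with the paper's proof. The coefficient $a(\Lambda)=\left<\varphi_z,\chi^{\Lambda}\right>_{S_n(G)}$ is computed by Theorem \ref{Theoremch1.3.2} (proved through the Hirai--Hirai--Hora induced-representation construction and Frobenius reciprocity, with the per-cycle expansion of $z$ in the characters $\chi^{\pi^{\zeta}}$ and orthogonality on $G$ carried out in Proposition \ref{PropPT6}), giving $a(\Lambda)=\prod_{\zeta\in\widehat{G}}\widehat{s}_{\Lambda(\zeta)}(\zeta)$; the specialization $p_r\equiv z$ makes $\widehat{p}_r(\zeta)=\alpha(\zeta)$ for all $r$, and Macdonald's formula \eqref{ch2.1.5} collapses each $\widehat{s}_{\Lambda(\zeta)}(\zeta)$ to $\prod_{\Box\in\Lambda(\zeta)}\bigl(\alpha(\zeta)+c(\Box)\bigr)/h(\Box)$, whence \eqref{13.2.2.2}. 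Note that the paper's route is cleaner than handling $z$ and $\overline{z}$ simultaneously inside the orthogonality integral as you propose: the symmetric-function work is done once, holomorphically in $z$, and the pair of factors $(\alpha(\zeta)+c(\Box))$, $(\overline{\alpha(\zeta)}+c(\Box))$ appears only at the last step from $|a(\Lambda)|^2$; moreover, the decoupling over the infinite set $\widehat{G}$ that you flag as a delicate point is automatic in this formulation, with no convergence issue beyond $I=\sum_{\zeta}|\alpha(\zeta)|^2<\infty$.
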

\begin{Remarks}
\textbf{(a)} Theorem \ref{Theorem13.2.2} holds for any compact group $G$ such that the set $\widehat{G}$ of equivalence classes of continuous unitary irreducible representations of $G$ is at most countable. It generalizes the formulas for the $z$-measures for the symmetric group, and for the wreath product of a finite group with the infinite symmetric group available in the literature. The $z$-measures  for the symmetric group were studied
by several authors. These measures were first defined in the context of harmonic analysis in
Kerov, Olshanski, and Vershik \cite{KerovOlshanskiVershikAnnouncement, KerovOlshanskiVershik}.
Later it was observed that for special values of parameters $z$-measures for the symmetric group
turn into discrete orthogonal polynomial ensembles, and are to related probabilistic models of statistical mechanics,
see Borodin and Olshanski \cite{BorodinOlshanskiRSK}.  In Refs.\cite{Borodin1, Borodin2, BorodinOlshanskiLetters,
BorodinOlshanskiKernel, BorodinOlshanskiMarkov, BorodinOlshanskiStrahov} the reader can find many results
on the $z$-measures for the symmetric group, and on associated determinantal point processes.
In addition, the $z$-measures for the symmetric group are a particular case of the Schur measure introduced in Okounkov \cite{OkounkovSchur}. Strahov \cite{StrahovMPS} (see equation (8.3)) gives a formula for the $z$-measures associated with the wreath product of a finite group $G$ with the symmetric group $S_n$.\\
(\textbf{b}) The functions $z: G:\longrightarrow\C\setminus{0}$ and
$\alpha: \widehat{G}\longrightarrow\C$ in Theorem \ref{Theorem13.2.2} are related
by the Parseval identity for  compact groups,
\begin{equation}\label{Parseval}
\left<z,z\right>_G=\sum\limits_{\zeta\in\widehat{G}}\alpha(\zeta)\overline{\alpha(\zeta)}.
\end{equation}
(\textbf{c}) If $G=U(1)$, then $\widehat{G}=\mathbb{Z}$,  the irreducible characters
$\chi^{\pi^{l}}$ are functions on the unit circle,
$\chi^{\pi^{l}}(\varphi)=e^{il\varphi}$, and $M_z^{(n)}$ is a probability measure on $S_n(U(1))$. It follows that
\begin{equation}\label{Parseval1}
\sum\limits_{\Lambda\in\Y_n(\mathbb{Z})}
\prod\limits_{l=-\infty}^{+\infty}\prod\limits_{\Box\in\lambda^{(l)}}
\frac{(\alpha(l)+c(\Box))(\overline{\alpha(l)}+c(\Box))}{h^2(\Box)}
=\frac{1}{n!}\left(\frac{1}{2\pi}\int\limits_0^{2\pi}|z(e^{i\theta})|^2d\theta\right)_n,
\end{equation}
where the sum is over the set $\Y_n(\mathbb{Z})$ of all families $\Lambda=\left(\lambda^{(l)}\right)_{l=-\infty}^{+\infty}$ of Young diagrams such that
$$
\sum\limits_{l=-\infty}^{+\infty}\left|\lambda^{(l)}\right|=n,
$$
and where
$$
\alpha(l)=\frac{1}{2\pi}\int\limits_0^{2\pi}z(e^{i\varphi})e^{-il\varphi}d\varphi, \;\;l\in\mathbb{Z},
$$
are the Fourier coefficients of the function $z$ on the unit circle.
As $n=1$, equation (\ref{Parseval1})  is reduced to
\begin{equation}
\sum\limits_{l=-\infty}^{+\infty}\alpha(l)\overline{\alpha(l)}=
\frac{1}{2\pi}\int\limits_0^{2\pi}\left|z(e^{i\varphi})\right|^2d\varphi,
\end{equation}
which is
the usual Parseval identity for the Fourier coefficients $(\alpha(l))_{l=-\infty}^{+\infty}$ of the function $z$ on the unit circle.
\end{Remarks}

%%%%%%%%%%%%%%%%%%%%%%%%%%%%%%%%%%%%%%%%%%%%%%%%%%%%%%%%%%%%%%%%%%%%%%%%%%%%%%%%%%
%%%%%%%%%%%%%%%%%%%%%%%%%%%%%%%%%%%%%%%%%%%%%%%%%%%%%%%%%%%%%%%%%%%%%%%%%%%%%%%%%%%%%%
%%%%%%%%%%%%%%%%%%%%%%%%%%%%%%%%%%%%%%%%%%%%%%%%%%%%%%%%%%%%%%%%%%%%%%%%%%%%%%%%%%%%%
%%%%%%%%%%%%%%%%%%%%%%%%%%%%%%%%%%%%%%%%%%%%%%%%%%%%%%%%%%%%%%%%%
\subsubsection{Description of the spectral $z$-measures}
%%%%%%%%%%%%%%%%%%%%%%%%%%%%%%%%%%%%%%%%%%%%%%%%%%%%%%%%%%%%%%%%%%%%%%%%%%%%%%%%%%
%%%%%%%%%%%%%%%%%%%%%%%%%%%%%%%%%%%%%%%%%%%%%%%%%%%%%%%%%%%%%%%%%%%%%%%%%%%%%%%%%%%%%%
%%%%%%%%%%%%%%%%%%%%%%%%%%%%%%%%%%%%%%%%%%%%%%%%%%%%%%%%%%%%%%%%%%%%%%%%%%%%%%%%%%%%%
%%%%%%%%%%%%%%%%%%%%%%%%%%%%%%%%%%%%%%%%%%%%%%%%%%%%%%%%%%%%%%%%%
An analogue of Thoma's theorem \cite{Thoma} for the big wreath product $S_{\infty}(G)$ of a compact group $G$ with the infinite symmetric  group $S_{\infty}$ can be found in Hora and Hirai \cite{HoraHirai}. According to this theorem the character $\chi_z$ of the generalized  regular representation $\left(T_z,L^2\left(\mathfrak{S}_G,\mu_{\mathfrak{S}_G}^{\Ewens}\right)\right)$
can be written as
\begin{equation}\label{ZcharacterRepresentation}
\chi_z(\varrho)=\int\limits_{\triangle}f_{\omega}(\varrho)dP_z(\omega),
\end{equation}
where $\triangle$ is a generalized Thoma set, $\chi_z(\varrho)$ is the value of $\chi_z$ at an element of the conjugacy class of $S_{\infty}(G)$
parameterized by the family $\varrho$ of partitions,  $P_z$ is a probability measure on $\triangle$, and $f_{\omega}(\varrho)$ is a kernel. The formulae for $f_{\omega}(\varrho)$ and $\triangle$ are given in Hora and Hirai \cite{HoraHirai}, see Theorem 3.4. The problem of harmonic analysis is to describe the measure $P_z$. In representation-theoretical terms, this is equivalent to the description of the decomposition of $T_z$ into irreducible components.
Our Theorem \ref{THEOREMSPECTRAL} gives the description of $P_z$ in terms of the spectral $z$-measures associated with the generalized regular representations of the infinite symmetric group. The spectral $z$-measures for the infinite symmetric group were studied in detail by Borodin and Olshanski in Refs. \cite{Borodin1, Borodin2, BorodinOlshanskiLetters}.
%%%%%%%%%%%%%%%%%%%%%%%%%%%%%%%%%%%%%%%%%%%%%%%%%%%%%%%%%%%%%%%%%%%%%%%%%%%%%%%%%%
%%%%%%%%%%%%%%%%%%%%%%%%%%%%%%%%%%%%%%%%%%%%%%%%%%%%%%%%%%%%%%%%%%%%%%%%%%%%%%%%%%%%%%
\subsection{Organization of the paper}
The paper is organized as follows. In \textbf{Section \ref{SECTIONNOTATION}} we introduce the notation for
the wreath product $S_n(G)$ of a compact group $G$ with the symmetric group $S_n$, and review the basic facts on conjugacy classes and irreducible representations of $S_n(G)$.
In \textbf{Section \ref{SECTIONEWENSMEASURE}} we define the Ewens measure $\mu_{S_n(G)}^{\Ewens}$ on the wreath product $S_n(G)$ and prove that it is a probability measure. The canonical projection
$p_{n,n+1}:\; S_{n+1}(G)\longrightarrow S_n(G)$ is introduced in \textbf{Section \ref{SECTIONCANONICALPROJECTION}}.
In addition, in Section \ref{SECTIONCANONICALPROJECTION} we show that the probability measures
$\mu_{S_n(G)}^{\Ewens}$ are preserved by $p_{n,n+1}$. In \textbf{Section \ref{THESPACEOFVIRTUALPERMUTATIONS}}
we introduce the probability space $\left(\mathfrak{S}_G,\mu^{\Ewens}_{\mathfrak{S}_G}\right)$,
where $\mathfrak{S}_{G}$ is the space of $G$-virtual permutations. We study the action of $S_{\infty}(G)\times S_{\infty}(G)$ on this space and prove that $\mu^{\Ewens}_{\mathfrak{S}_G}$ is quasi-invariant
with respect to this action. The results of Section \ref{THESPACEOFVIRTUALPERMUTATIONS} are used in
\textbf{Section \ref{SECTIONGENERALIZEDREGULARREPRESENTATIONS}} to construct the generalized
regular representations of $S_{\infty}(G)\times S_{\infty}(G)$, and to obtain
some formulae for their characters.

\textbf{Section \ref{SECTIONPROOFT1}} is devoted to
the proof of Theorem \ref{Theorem13.2.2}. To compute the $z$-measures explicitly, we introduce
algebras of symmetric functions $\widehat{\SYM}(\zeta)$, where $\zeta$ belongs to the set $\widehat{G}$
of equivalence classes of continuous unitary irreducible representations o $G$. Theorem \ref{Theoremch1.3.2}
of Section \ref{SECTIONPROOFT1} relates the characters $\chi^{\Lambda}$ of the irreducible
representations of $S_n(G)$ with the products $\prod_{\zeta\in\widehat{G}}\widehat{s}_{\Lambda(\zeta)}(\zeta)$,
where $\widehat{s}_{\lambda}(\zeta)$ is the Schur function in $\widehat{\SYM}(\zeta)$ parameterized by a Young diagram $\lambda$. Then we show how to obtain Theorem \ref{Theorem13.2.2} from Theorem \ref{Theoremch1.3.2}.
Theorem \ref{Theoremch1.3.2} is of independent interest and is proved in \textbf{Section \ref{SECTIONPROOFTHEOREMCH}}.
The proof is based on the Hirai, Hirai, and Hora construction of irreducible
representations of $S_n(G)$.

\textbf{Section \ref{SECTIONHF1}} relates the $z$-measures for $S_n(G)$ with harmonic functions on a certain branching graph, which enables us to obtain an integral representation
for the characters $\chi_z$ of the generalized regular representations in \textbf{Section \ref{SECTIONSPECTRAL}}.
Finally, Theorem \ref{THEOREMSPECTRAL} of Section \ref{SECTIONSPECTRAL} gives a description of the spectral
$z$-measures.
%%%%%%%%%%%%%%%%%%%%%%%%%%%%%%%%%%%%%%%%%%%%%%%%%%%%%%%%%%%%%%%%%%%%%%%%%%%%%%%%%%%%%
%%%%%%%%%%%%%%%%%%%%%%%%%%%%%%%%%%%%%%%%%%%%%%%%%%%%%%%%%%%%%%%%%

\textbf{Acknowledgement.} I am very grateful to G. Olshanski and A. Sodin for discussions.

%%%%%%%%%%%%%%%%%%%%%%%%%%%%%%%%%%%%%%%%%%%%%%%%%%%%%%%%%%%%%%%%%%%%%%%%%%%%%%%%%%
%%%%%%%%%%%%%%%%%%%%%%%%%%%%%%%%%%%%%%%%%%%%%%%%%%%%%%%%%%%%%%%%%%%%%%%%%%%%%%%%%%%%%%
%%%%%%%%%%%%%%%%%%%%%%%%%%%%%%%%%%%%%%%%%%%%%%%%%%%%%%%%%%%%%%%%%%%%%%%%%%%%%%%%%%%%%
%%%%%%%%%%%%%%%%%%%%%%%%%%%%%%%%%%%%%%%%%%%%%%%%%%%%%%%%%%%%%%%%%
\section{Notation}\label{SECTIONNOTATION}
%%%%%%%%%%%%%%%%%%%%%%%%%%%%%%%%%%%%%%%%%%%%%%%%%%%%%%%%%%%%%%%%%%%%%%%%%%%%%%%%%%
%%%%%%%%%%%%%%%%%%%%%%%%%%%%%%%%%%%%%%%%%%%%%%%%%%%%%%%%%%%%%%%%%%%%%%%%%%%%%%%%%%%%%%
\subsection{The symmetric group and Young diagrams}
For the symmetric group and Young diagrams, we adopt the notation and definitions from Macdonald \cite{Macdonald}.
In particular, let $S_n$ be the symmetric group of degree $n$, that is, the group of permutations of the finite set $I_n=\left\{1,\ldots,n\right\}$. Denote by $\Y_n$ the set of all Young diagrams with  $n$ boxes, and by $\Y$ the set of all Young diagrams. If $\varrho\in\Y_n$, then we can write
$$
\varrho=1^{m_1(\varrho)}2^{m_2(\varrho)}\ldots n^{m_n(\varrho)},
$$
where $m_i(\varrho)$ is the number of rows in $\varrho$ equal to $i$. The number of elements of $S_n$ in the conjugacy class given by $\varrho\in\Y_n$ is
$\frac{n!}{z_{\varrho}}$, where
$$
z_\varrho=1^{m_1(\varrho)}2^{m_2(\varrho)}\ldots n^{m_n(\varrho)}\; m_1(\varrho)! m_2(\varrho)!\ldots m_n(\varrho)!.
$$
The Frobenius formula for the irreducible characters of $S_n$ is
$$
p_{\varrho}=\sum\limits_{\lambda\in\Y_n}\chi^{\lambda}_{\varrho}s_{\lambda},
$$
where $\chi^{\lambda}_{\varrho}$ is the character of the irreducible representation of $S_n$ parameterized  by $\lambda$, and evaluated at the conjugacy class of $S_n$ parameterized by $\varrho$. Finally, if $\lambda\in\Y_n$, then $|\lambda|$ denotes the number of boxes in $\lambda$.
%%%%%%%%%%%%%%%%%%%%%%%%%%%%%%%%%%%%%%%%%%%%%%%%%%%%%%%%%%%%%%%%%%%%%%%%%%%%%%%%%%%%%
%%%%%%%%%%%%%%%%%%%%%%%%%%%%%%%%%%%%%%%%%%%%%%%%%%%%%%%%%%%%%%%%%
\subsection{The wreath product $S_n\left(G\right)$.}
Let $G$ be a compact group.  Set
\begin{equation}
D_n(G)=G^n=\left\{d=(g_1,\ldots,g_n)\vert g_1\in G,\ldots,g_n\in G\right\}.
\end{equation}
The action of $S_n$ on $D_n(G)=G^n$ is defined by
$$
s:\; d=\left(g_1,\ldots,g_n\right)\longrightarrow s(d)=\left(g_{s^{-1}(1)},\ldots,g_{s^{-1}(n)}\right).
$$
Denote by $S_n(G)$ the wreath product of $G$ with $S_n$. By definition, $S_n(G)$ is the semidirect product $S_n(G)=D_n(G)\rtimes S_n$ (that is, each element of $S_n(G)$ has the form
$(d,s)$, $d\in D_n(G)$, $s\in S_n$, and $(d,s)(d',s')=(ds(d'),ss')$).
In other words,  the wreath product $S_n\left(G\right)$
is a group whose underlying set is
$$
G^n\times S_n=\left\{\left(\left(g_1,\ldots,g_n\right),s\right):\; g_i\in G, s\in S_n\right\}.
$$
The multiplication in $S_n\left(G\right)$ can be expressed explicitly as
$$
\left(\left(g_1,\ldots,g_n\right),s\right)
\left(\left(h_1,\ldots,h_n\right),t\right)
=\left(\left(g_1h_{s^{-1}(1)},\ldots,g_nh_{s^{-1}(n)}\right),st\right).
$$
When $n=1$, $S_1\left(G\right)$ is $G$.
%%%%%%%%%%%%%%%%%%%%%%%%%%%%%%%%%%%%%%%%%%%%%%%%%%%%%%%%%%%%%%%%%%%%%%%%%%%%%%%%%%
%%%%%%%%%%%%%%%%%%%%%%%%%%%%%%%%%%%%%%%%%%%%%%%%%%%%%%%%%%%%%%%%%%%%%%%%%%%%%%%%%%%%%%
%%%%%%%%%%%%%%%%%%%%%%%%%%%%%%%%%%%%%%%%%%%%%%%%%%%%%%%%%%%%%%%%%%%%%%%%%%%%%%%%%%%%%
%%%%%%%%%%%%%%%%%%%%%%%%%%%%%%%%%%%%%%%%%%%%%%%%%%%%%%%%%%%%%%%%%
%%%%%%%%%%%%%%%%%%%%%%%%%%%%%%%%%%%%%%%%%%%%%%%%%%%%%%%%%%%%%%%%%%%%%%%%%%
\subsection{Conjugacy classes and irreducible representations of $G$}\label{NotSection1.2}
%%%%%%%%%%%%%%%%%%%%%%%%%%%%%%%%%%%%%%%%%%%%%%%%%%%%%%%%%%%%%%%%%%%%%%%%%%%%%%%%%%%%%%%%%%%%%%%%%%
%%%%%%%%%%%%%%%%%%%%%%%%%%%%%%%%%%%%%%%%%%%%%%%%%%%%%%%%%%%%%%%%%%%%%%%%%%%%%%%%%%
%%%%%%%%%%%%%%%%%%%%%%%%%%%%%%%%%%%%%%%%%%%%%%%%%%%%%%%%%%%%%%%%%%%%%%%%%%%%%%%%%%%%%%
%%%%%%%%%%%%%%%%%%%%%%%%%%%%%%%%%%%%%%%%%%%%%%%%%%%%%%%%%%%%%%%%%%%%%%%%%%%%%%%%%%%%%
%%%%%%%%%%%%%%%%%%%%%%%%%%%%%%%%%%%%%%%%%%%%%%%%%%%%%%%%%%%%%%%%%
Denote by $\left[G\right]$ the set of conjugacy classes
of $G$, and by $\widehat{G}$ the set of equivalence classes of continuous irreducible
unitary representations of $G$. We assume that $\widehat{G}$ is at most countable.
Therefore, an element $\zeta\in\widehat{G}$ is an equivalent class of continuous unitary irreducible representations of $G$.  The notation $\pi^{\zeta}\in\zeta$ means that $\pi^{\zeta}$ is a representation of class $\zeta$. We denote by $V^{\zeta}$ the representation space
of $\pi^{\zeta}$, and by $\chi^{\pi^\zeta}$ the character of $\pi^{\zeta}$.
%%%%%%%%%%%%%%%%%%%%%%%%%%%%%%%%%%%%%%%%%%%%%%%%%%%%%%%%%%%%%%%%%%%%%%%%%%%%%%%%%%
%%%%%%%%%%%%%%%%%%%%%%%%%%%%%%%%%%%%%%%%%%%%%%%%%%%%%%%%%%%%%%%%%%%%%%%%%%%%%%%%%%%%%%
%%%%%%%%%%%%%%%%%%%%%%%%%%%%%%%%%%%%%%%%%%%%%%%%%%%%%%%%%%%%%%%%%%%%%%%%%%%%%%%%%%%%%
%%%%%%%%%%%%%%%%%%%%%%%%%%%%%%%%%%%%%%%%%%%%%%%%%%%%%%%%%%%%%%%%%
\subsection{Representations of $D_n(G)=G^n$}\label{SubsectionDn}
%%%%%%%%%%%%%%%%%%%%%%%%%%%%%%%%%%%%%%%%%%%%%%%%%%%%%%%%%%%%%%%%%%%%%%%%%%%%%%%%%%
%%%%%%%%%%%%%%%%%%%%%%%%%%%%%%%%%%%%%%%%%%%%%%%%%%%%%%%%%%%%%%%%%%%%%%%%%%%%%%%%%%%%%%
%%%%%%%%%%%%%%%%%%%%%%%%%%%%%%%%%%%%%%%%%%%%%%%%%%%%%%%%%%%%%%%%%%%%%%%%%%%%%%%%%%%%%
%%%%%%%%%%%%%%%%%%%%%%%%%%%%%%%%%%%%%%%%%%%%%%%%%%%%%%%%%%%%%%%%%
Denote by $\widehat{D_n(G)}$ the set of equivalence classes of continuous unitary irreducible representations of $D_n(G)$.  We can represent $\widehat{D_n(G)}$ as
$$
\widehat{D_n(G)}=\left\{\eta=\left(\zeta^1,\ldots,\zeta^n\right)\vert
\zeta^1\in\widehat{G},\ldots,\zeta^n\in\widehat{G}\right\}.
$$
Each representation of $D_n(G)$ from the equivalence class $\eta=\left(\zeta^1,\ldots,\zeta^n\right)$ can be written as
\begin{equation}\label{reps1}
\pi^{\zeta^1}\boxtimes\ldots\boxtimes\pi^{\zeta^n},
\end{equation}
where $\pi^{\zeta^j}$ is a representation of $G$ from the equivalence class $\zeta^{j}\in\widehat{G}$, and $\boxtimes$ denotes the outer tensor product of representations
The representation space of (\ref{reps1}) is
\begin{equation}\label{reps3}
V^{\zeta^1}\otimes\ldots\otimes V^{\zeta^n},
\end{equation}
where $\otimes$ denotes the outer tensor product of vector spaces.
The action of the symmetric group $S_n$ on $\widehat{D_n(G)}$  is defined by
$$
s\eta=\left(\zeta^{s^{-1}(1)},\ldots, \zeta^{s^{-1}(n)}\right),\;\;
\eta=\left(\zeta^1,\ldots,\zeta^n\right),\;\; s\in S_n.
$$
We note that the action of $S_n$ on $\widehat{D_n(G)}$ can be understood as
that of $S_n$ on $D_n(G)$:
\begin{equation}\label{reps4}
\left(s\pi^{\eta}\right)(g_1,\ldots,g_n)\overset{\mbox{def}}{=}\pi^{\eta}\left(g_{s(1)},\ldots,g_{s(n)}\right)=\pi^{\zeta^{s^{-1}(1)}}(g_1)
\boxtimes\ldots\boxtimes\pi^{\zeta^{s^{-1}(n)}}(g_n).
\end{equation}
We denote by $S^{\eta}$ a subgroup of $S_n$ defined by
\begin{equation}\label{reps5}
S^{\eta}=\left\{s\in S_n\vert s\eta=\eta\right\}.
\end{equation}
\subsection{The conjugacy classes of $S_n\left(G\right)$}
In order to describe the conjugacy classes of $S_n\left(G\right)$ it is convenient to identify each conjugacy class $c\in\left[G\right]$ with a color. For example, if $G=U(1)$, then $\left[G\right]$
is the unit circle on the complex plane, $\left[G\right]=\left\{c\in\C\vert |c|=1\right\}$, and each point $c$ of the unit circle
is understood as a specific color.

Assume that $x=\left(\left(g_1,\ldots,g_n\right),s\right)\in S_n\left(G\right)$, and write $s\in S_n$ as a product of disjoint cycles. If $\left(i_1i_2\ldots i_r\right)$ is one of these cycles,
its color is determined by that of the conjugacy class of $g_{i_r}g_{i_{r-1}}\ldots g_{i_1}\in G$.
That is, if $g_{i_r}g_{i_{r-1}}\ldots g_{i_1}$ belongs to the conjugacy class $c$, $c\in\left[G\right]$,
then we say that the color of $\left(i_1i_2\ldots i_r\right)$ is $c$.
In this way, the element
$x=\left(\left(g_1,\ldots,g_n\right),s\right)\in S_n\left(G\right)$ gives rise to a Young diagram $\varrho$ with $n$ boxes and with colored rows. Each row of $\varrho$ represents a cycle of $s$, and the color of the row is that of the corresponding cycle.
The Young diagram $\varrho$ can be understood as a collection of colored Young diagrams,
$\varrho=\left(\varrho(c)\right)_{c\in\left[G\right]}$, or as a map
$$
\varrho:\;\; \left[G\right]\longrightarrow \Y,
$$
from the set $\left[G\right]$ of the conjugacy classes of $G$ to the set $\Y$ of all Young diagrams such that
\begin{equation}\label{Condition1}
\sum\limits_{c\in\left[G\right]}\left|\varrho(c)\right|=n.
\end{equation}
The map $\varrho$ is called the \textit{type} of $x$, $x\in S_n\left(G\right)$. It is known that two elements
$x$ and $y$ of $S_n\left(G\right)$ belong to the same conjugacy class of $S_n\left(G\right)$
if and only if they have the same type. The proof of this fact in the case
of an arbitrary compact group can be found in Hora and Hirai \cite{HoraHirai}, Section 1.3.
In what follows, we denote by $\Y_n\left(\left[G\right]\right)$ the set of all
mappings $\varrho$ from $\left[G\right]$ to $\Y$ satisfying condition (\ref{Condition1}).
We can write
\begin{equation}\label{Yn[G]}
\Y_n\left(\left[G\right]\right)
=\left\{\varrho=\left(\varrho(c)\right)_{c\in \left[G\right]}
\biggl|\varrho(c)\in\Y,\;\sum\limits_{c\in\left[G\right]}|\varrho(c)|=n\right\}.
\end{equation}
\begin{example}Let $x=\left(\left(g_1,g_2,g_3,g_4,g_5\right),s\right)$ be an element of $S_5\left(U(1)\right)$, and assume that
$$
g_1=e^{i\pi},\; g_2=e^{i\frac{\pi}{4}},\;g_3= e^{i\frac{3\pi}{4}},\;
g_4=e^{i\pi},\; g_5=e^{i\frac{2\pi}{3}},
$$
and that $s=(152)(34)$. Then the color of $(34)$ is $g_4g_3=e^{i\frac{7\pi}{4}}$,
and the color of $(152)$ is $g_2g_5g_1=e^{i\frac{23\pi}{12}}$. We obtain
the Young diagram $\varrho=(3,2)$, where the first row is of color $c=e^{i\frac{23\pi}{12}}$, and the second row is of color $c=e^{i\frac{\pi}{2}}$.
The Young diagram $\varrho$ can be understood as a map from the unit circle to $\Y$ such that
$$
\varrho(c)=
\left\{
  \begin{array}{ll}
    \Box\Box\Box, & c=e^{i\frac{23\pi}{12}},\\
    \Box\Box, & c=e^{i\frac{7\pi}{4}}, \\
    \emptyset, & \hbox{otherwise.}
  \end{array}
\right.
$$
The map $\varrho$ is the type of $x=\left(\left(g_1,g_2,g_3,g_4,g_5\right),s\right)$, and determines the conjugacy class of
$x=\left(\left(g_1,g_2,g_3,g_4,g_5\right),s\right)$ in $S_5\left(U(1)\right)$.
\end{example}
\subsection{Irreducible unitary representations of $S_n\left(G\right)$}
It is known (see Hirai, Hirai, Hora \cite{HiraiHiraiHoraI}, $\S 3$ and $\S 4$, and Hora and Hirai \cite{HoraHirai}, Section 1.4) that
the equivalence classes of irreducible unitary representations of $S_n\left(G\right)$ are parameterized by maps
$$
\Lambda:\; \widehat{G}\longrightarrow\Y
$$
such that
$$
\sum\limits_{\zeta\in \widehat{G}}\left|\Lambda(\zeta)\right|=n.
$$
The set of all such maps is denoted by $\Y_n\left(\widehat{G}\right)$. This set can be represented as
\begin{equation}\label{YnG}
\Y_n\left(\widehat{G}\right)=\left\{\Lambda=\left(\Lambda(\zeta)\right)_{\zeta\in\widehat{G}}
\biggl|\Lambda(\zeta)\in\Y,\;\sum\limits_{\zeta\in\widehat{G}}|\Lambda(\zeta)|=n\right\}.
\end{equation}
%%%%%%%%%%%%%%%%%%%%%%%%%%%%%%%%%%%%%%%%%%%%%%%%%%%%%%%%%%%%%%%%%%%%%%%%%%%%%
%%%%%%%%%%%%%%%%%%%%%%%%%%%%%%%%%%%%%%%%%%%%%%%%%%%%%%%%%%%%%%%%%%%%%%%%%%%%%%%%%%%%
%%%%%%%%%%%%%%%%%%%%%%%%%%%%%%%%%%%%%%%%%%%%%%%%%%%%%%%%%%%%
\section{The Ewens measure on $S_n\left(G\right)$}\label{SECTIONEWENSMEASURE}
%%%%%%%%%%%%%%%%%%%%%%%%%%%%%%%%%%%%%%%%%%%%%%%%%%%%%%%%%%%%%%%%%%%%%%%%%%%%%
%%%%%%%%%%%%%%%%%%%%%%%%%%%%%%%%%%%%%%%%%%%%%%%%%%%%%%%%%%%%%%%%%%%%%%%%%%%%%%%%%%%%
%%%%%%%%%%%%%%%%%%%%%%%%%%%%%%%%%%%%%%%%%%%%%%%%%%%%%%%%%%%%
%%%%%%%%%%%%%%%%%%%%%%%%%%%%%%%%%%%%%%%%%%%%%%%%%%%%%%%%%%%%%%%%%%%%%%%%%%%%%
%%%%%%%%%%%%%%%%%%%%%%%%%%%%%%%%%%%%%%%%%%%%%%%%%%%%%%%%%%%%%%%%%%%%%%%%%%%%%%%%%%%%
%%%%%%%%%%%%%%%%%%%%%%%%%%%%%%%%%%%%%%%%%%%%%%%%%%%%%%%%%%%%
Equip the group $S_n\left(G\right)$ with a measure $\mu_{S_n\left(G\right)}$ which is the product measure
of the normalized Haar measures on $G$, and the uniform measure on $S_n$. In particular, if
$f:\; S_n\left(G\right)\longrightarrow\C$ is  a continuous function defined on $S_n\left(G\right)$, then we have
\begin{equation}
\int\limits_{S_n\left(G\right)}f(x)d\mu_{S_n\left(G\right)}(x)=
\frac{1}{n!}\sum\limits_{s\in S_n}\int\limits_{G}\ldots\int\limits_{G}
f\left(\left(\left(g_1,\ldots,g_n\right),s\right)\right)
d\mu_G\left(g_1\right)\ldots d\mu_G\left(g_n\right),
\end{equation}
where $\mu_G$ is the normalized Haar measure on $G$. We agree that if $x=\left(\left(g_1,\ldots,g_n\right),s\right)\in
S_n\left(G\right)$, then $[x](c)$ denotes the number of cycles of color $c$ in $s$, $s\in S_n$.
\begin{defn}\label{Definition2.1}Let $z:\; G\longrightarrow\C\setminus\{0\}$ be a continuous central function.
Set
\begin{equation}
P_{S_n\left(G\right)}^{\Ewens}(x)=n!\;\frac{\prod\limits_{c\in[G]}\left(z(c)\right)^{[x](c)}\left(\overline{z(c)}\right)^{[x](c)}}{
I(I+1)(I+n-1)
},\; x\in S_n(G),
\end{equation}
where $I=\int\limits_{G}\left|z(g)\right|^2d\mu_{G}(g)$. The measure
$\mu^{\Ewens}_{S_n(G)}$ defined by
\begin{equation}\label{TheRelationBetweenMeasures}
d\mu_{S_n\left(G\right)}^{\Ewens}(x)=P_{S_n\left(G\right)}^{\Ewens}(x)d\mu_{S_n\left(G\right)}(x)
\end{equation}
is called the Ewens measure on $S_n\left(G\right)$.
\end{defn}
\begin{prop}The Ewens measure $\mu_{S_n\left(G\right)}^{\Ewens}$ is a probability measure
on $S_n\left(G\right)$.
\end{prop}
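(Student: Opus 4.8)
The plan is to verify the two defining properties of a probability measure for the density $P^{\Ewens}_{S_n(G)}$: nonnegativity and total mass one. Nonnegativity is immediate, since the numerator of $P^{\Ewens}_{S_n(G)}(x)$ equals $\prod_{c\in[G]}|z(c)|^{2[x](c)}\ge 0$, while $I=\int_G|z(g)|^2\,d\mu_G(g)>0$ because $z$ is continuous and nowhere vanishing; hence each factor $I,I+1,\dots,I+n-1$ of the denominator is strictly positive. Thus everything reduces to computing
\[
J:=\int_{S_n(G)}\prod_{c\in[G]}|z(c)|^{2[x](c)}\,d\mu_{S_n(G)}(x)
\]
and checking that $J=(I)_n/n!$, since then the normalizing prefactor $n!/(I)_n$ makes the total mass equal to $1$.

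First I would rewrite the integrand cycle by cycle. Since $z$ is a central function, for $x=((g_1,\dots,g_n),s)$ the factor $\prod_{c}|z(c)|^{2[x](c)}$ equals $\prod_{\gamma}|z(g_\gamma)|^2$, the product over the cycles $\gamma$ of $s$, where $g_\gamma$ is the product of the entries $g_i$, $i\in\gamma$, taken in the order that determines the color of $\gamma$. Unfolding the definition of $\mu_{S_n(G)}$ then gives
\[
J=\frac{1}{n!}\sum_{s\in S_n}\int_{G^n}\prod_{\gamma}|z(g_\gamma)|^2\,\prod_{i=1}^n d\mu_G(g_i).
\]

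The key step is that, because the index sets of distinct cycles of $s$ are disjoint, the inner integral factorizes over the cycles, and each factor equals $I$ regardless of the cycle length. Indeed, for a cycle $\gamma$ of length $r$, fix all of its variables but one; then $g_\gamma$ is a fixed left-translate of the remaining variable, so by left-invariance of the Haar measure $\int_G|z(g_\gamma)|^2\,d\mu_G=\int_G|z(h)|^2\,d\mu_G=I$, and integrating out the other $r-1$ probability measures leaves $I$ unchanged. Hence the inner integral equals $I^{l(s)}$, where $l(s)$ is the number of cycles of $s$, and $J=\frac{1}{n!}\sum_{s\in S_n}I^{l(s)}$.

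Finally I would invoke the classical cycle-counting identity $\sum_{s\in S_n}t^{l(s)}=t(t+1)\cdots(t+n-1)=(t)_n$ (the generating function of the unsigned Stirling numbers of the first kind) with $t=I$, which yields $J=(I)_n/n!$ and hence total mass $1$. The only genuinely substantive point is the Haar-invariance computation showing that each cycle contributes exactly the factor $I$ independently of its length; the rest is the unfolding of definitions together with a standard symmetric-group identity.
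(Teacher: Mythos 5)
Your proposal is correct and follows essentially the same route as the paper: factor the integrand over the cycles of $s$, show that the integral over $G^n$ contributes $I^{l(s)}$, and conclude via the classical identity $\sum_{s\in S_n}I^{l(s)}=(I)_n$ (which the paper phrases as the normalization of the Ewens measure $P^{\Ewens}_{n,I}$ on $S_n$). Your argument even spells out the Haar bi-invariance step that makes each cycle contribute exactly $I$ independently of its length, a point the paper's proof asserts without detail.
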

\begin{proof}Set $t(c)=z(c)\overline{z(c)}$, and write $x=\left(\left(g_1,\ldots,g_n\right),s\right)$. Assume that
$s\in S_n$ can be represented as
$$
s=\left(i_1i_2\ldots i_l\right)\left(j_1j_2\ldots j_m\right)\ldots\left(\nu_1\nu_2\ldots\nu_r\right).
$$
Then
$$
\prod\limits_{c\in\left[G\right]}\left(t(c)\right)^{[x](c)}=
t\left(g_{i_l}\ldots g_{i_2}g_{i_1}\right)\ldots
t\left(g_{j_m}\ldots g_{j_2}g_{j_1}\right)t\left(g_{\nu_r}\ldots g_{\nu_2}g_{\nu_1}\right).
$$
This gives
\begin{equation}
\int\limits_{G}\ldots\int\limits_{G}\prod\limits_{c\in\left[G\right]}
\left(t(c)\right)^{[x](c)}d\mu_{G}(g_1)\ldots d\mu_G(g_n)= I^{m_1+m_2+\ldots+m_n},
\end{equation}
where $I$ is defined as in Definition \ref{Definition2.1},
and $m_i$ denotes the number of cycles of length $i$ in $s$. As a result, we obtain
\begin{equation}\label{TRIPYAT}
\begin{split}
\int\limits_{S_n\left(G\right)}d\mu^{\Ewens}_{S_n\left(G\right)}(x)&=
\frac{1}{I(I+1)\ldots (I+n-1)}\\
&\times\sum\limits_{m_1+2m_2+\ldots+nm_n=n}
\frac{n!I^{m_1+m_2+\ldots+m_n}}{1^{m_1}m_1!2^{m_2}m_2!\ldots n^{m_n}m_n!},
\end{split}
\end{equation}
where $\frac{n!}{1^{m_1}m_1!2^{m_2}m_2!\ldots n^{m_n}m_n!}$ is the number of elements in $S_n$ whose cycle structure is $1^{m_1}2^{m_2}\ldots n^{m_n}$.
The expression on the right-hand side of equation (\ref{TRIPYAT}) can be rewritten as
$$
\sum\limits_{s\in S_n}P_{n,I}^{\Ewens}(s)=1,
$$
where $P^{\Ewens}_{n,I}$ is defined by (\ref{Res.1}) (with $\theta$ replaced by $I$).
\end{proof}
\begin{rem}
(a) Assume that $G$ contains only the unit element. Then $z$ is a non-zero complex parameter, $S_n(G)$ can be identified with the symmetric group $S_n$, and $\mu_{S_n(G)}^{\Ewens}$ turns into the Ewens measure on the symmetric group defined by equation (\ref{Res.1}) with the parameter $\theta=z\bar{z}$.\\
(b) Let $G$ be a finite group, and let $c_1$, $\ldots$, $c_k$ be the conjugacy classes of $G$. Denote by $z_l$ the value of the function $z$ on the conjugacy class $c_l$.
Then
$$
I=\sum\limits_{l=1}^k\frac{t_l}{\zeta_{c_l}},
$$
where $\zeta_{c_l}=\frac{|G|}{|c_l|}$ and $t_l=z_l\bar{z}_l$.  We obtain
$$
\mu_{S_n(G)}^{\Ewens}(x)=\frac{t_1^{[x](c_1)}\ldots t_k^{[x](c_k)}}{|G|^n
\left(\frac{t_1}{\zeta_{c_1}}+\ldots+\frac{t_k}{\zeta_{c_k}}\right)_n},
$$
where $(b)_n=b(b+1)\ldots(b+n-1)$ is the Pochhammer symbol.
This is the Ewens distribution on the wreath product of a finite group $G$ with the symmetric group $S_n$ introduced in Strahov \cite{StrahovMPS}, Proposition 4.1.
\end{rem}
%%%%%%%%%%%%%%%%%%%%%%%%%%%%%%%%%%%%%%%%%%%%%%%%%%%%%%%%%%%%%%%%%%%%%%%%%%%%%
%%%%%%%%%%%%%%%%%%%%%%%%%%%%%%%%%%%%%%%%%%%%%%%%%%%%%%%%%%%%%%%%%%%%%%%%%%%%%%%%%%%%
%%%%%%%%%%%%%%%%%%%%%%%%%%%%%%%%%%%%%%%%%%%%%%%%%%%%%%%%%%%%
\section{The canonical projection}\label{SECTIONCANONICALPROJECTION}
%%%%%%%%%%%%%%%%%%%%%%%%%%%%%%%%%%%%%%%%%%%%%%%%%%%%%%%%%%%%%%%%%%%%%%%%%%%%%
%%%%%%%%%%%%%%%%%%%%%%%%%%%%%%%%%%%%%%%%%%%%%%%%%%%%%%%%%%%%%%%%%%%%%%%%%%%%%%%%%%%%
%%%%%%%%%%%%%%%%%%%%%%%%%%%%%%%%%%%%%%%%%%%%%%%%%%%%%%%%%%%%
Here we define the canonical projection
$$
p_{n,n+1}:\; S_{n+1}\left(G\right)\longrightarrow S_n\left(G\right).
$$
The definition below is the same as that for the wreath products of a finite group with symmetric groups; see Strahov \cite{StrahovIsr25}, Section 2.2.

Let $\tilde{x}=\left(\left(g_1,\ldots,g_{n+1}\right),\tilde{s}\right)$ be an element of $S_{n+1}(G)$.
Represent $\tilde{s}$ in terms of cycles. If $n+1$ is a fixed point of $\tilde{s}$, then we set
$s=\tilde{s}$, and $p_{n,n+1}\left(\tilde{x}\right)=\left(\left(g_1,\ldots,g_{n}\right), s\right)$.
If $n+1$ belongs to a cycle,
\begin{equation}\label{3.1}
i_1\rightarrow\ldots\rightarrow i_m\rightarrow n+1\rightarrow i_{m+1}\rightarrow\ldots\rightarrow i_r,
\end{equation}
then we remove $n+1$ out of the cycle, and replace
\begin{equation}\label{3.2}
\tilde{g}=\left(g_1,\ldots,g_n,g_{n+1}\right)
\end{equation}
by
\begin{equation}\label{3.3}
g=\left(g_1,\ldots, g_{i_{m+1}}g_{n+1},\ldots,g_n\right).
\end{equation}
Thus, $g$ is obtained from $\tilde{g}$ by removing the $n+1$th element $g_{n+1}$ from $\tilde{g}$, and by
replacing the element $g_{i_{m+1}}$ of $\tilde{g}$ by $g_{i_{m+1}}g_{n+1}$.

An important property of the canonical projection is that it \textit{preserves the colors of the cycles}. In fact, the color of the cycle (\ref{3.1}) is determined by the conjugacy class of $g_{i_r}\ldots g_{i_{m+1}}g_{n+1}g_{i_m}\ldots g_{i_1}$
in $G$. Under the canonical projection the cycle (\ref{3.1}) turns into the cycle
\begin{equation}\label{3.4}
i_1\rightarrow\ldots\rightarrow i_m\rightarrow i_{m+1}\rightarrow\ldots\rightarrow i_r,
\end{equation}
the element $\tilde{g}$ defined by (\ref{3.2}) turns into the element $g$ defined by (\ref{3.3}), so the color of the cycle (\ref{3.4}) is again determined by the conjugacy class of $g_{i_r}\ldots g_{i_{m+1}}g_{n+1}g_{i_m}\ldots g_{i_1}$
in $G$.
\begin{prop}\label{PROPOSITIONCONISTECYEWENS}We have
\begin{equation}
\int\limits_{O(x_n)}d\mu^{\Ewens}_{S_{n+1}(G)}(x_{n+1}')=d\mu_{S_n(G)}^{\Ewens}(x_n),
\end{equation}
where  $O(x_n)$ is a subset of $S_{n+1}(G)$ defined by
\begin{equation}
O(x_n)=\left\{x_{n+1}':\; x_{n+1}'\in S_{n+1}(G),\; p_{n,n+1}(x_{n+1}')=x_n\right\}.
\end{equation}
In other words, the canonical projections $p_{n-1,n}$ preserve the measures $\mu^{\Ewens}_{S_{n}(G)}$.
\end{prop}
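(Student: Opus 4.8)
The plan is to prove the equivalent pushforward statement: for every continuous $f:S_n(G)\to\C$,
\[
\int_{S_{n+1}(G)} f\bigl(p_{n,n+1}(\tilde x)\bigr)\,d\mu^{\Ewens}_{S_{n+1}(G)}(\tilde x)=\int_{S_n(G)} f(x)\,d\mu^{\Ewens}_{S_n(G)}(x),
\]
which is exactly the assertion that fibrewise integration of $\mu^{\Ewens}_{S_{n+1}(G)}$ over $O(x_n)$ reproduces $\mu^{\Ewens}_{S_n(G)}$. I would write $t(c)=z(c)\overline{z(c)}$ and recall, as in the proof of the preceding proposition, that the numerator of the density equals $\prod_{c}t(c)^{[x](c)}=\prod_{\text{cyc}}t(\text{color})$. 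Using the definition of the reference measure and of $P^{\Ewens}$, the left-hand side becomes
\[
\frac{1}{(I)_{n+1}}\sum_{\tilde s\in S_{n+1}}\int_{G^{n+1}} f\bigl(p_{n,n+1}(\tilde x)\bigr)\prod_{\text{cyc of }\tilde s}t\;d\mu_G(g_1)\cdots d\mu_G(g_{n+1}).
\]

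The key combinatorial step is to reorganize the sum over $\tilde s\in S_{n+1}$ according to the position of the symbol $n+1$ in its cycle decomposition. Every $\tilde s$ falls into exactly one of $n+1$ types relative to an underlying $s\in S_n$: either $n+1$ is a fixed point, so $\tilde s=s\sqcup(n+1)$, or $n+1$ is inserted immediately before one of the symbols $i\in\{1,\ldots,n\}$ in a cycle of $s$. This gives a bijection $\tilde s\leftrightarrow(s,\text{slot})$ accounting for all $(n+1)\,n!=(n+1)!$ elements, and I would evaluate the inner integral separately in the two regimes.

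Then I would integrate out $g_{n+1}$ in each regime. In the fixed-point regime $p_{n,n+1}(\tilde x)=((g_1,\dots,g_n),s)$, the cycles of $\tilde s$ are those of $s$ together with the new $1$-cycle $(n+1)$, so the color product acquires an extra factor $t(g_{n+1})$; integrating $\int_G t(g_{n+1})\,d\mu_G=I$ produces the weight $I$ times the base integral attached to $s$. In each insertion regime, color preservation (established just before the statement) shows that the product of group elements around the merged cycle is unchanged, so $\prod_{\text{cyc of }\tilde s}t$ equals the color product of $p_{n,n+1}(\tilde x)$ and, together with $f$, depends on $g_i$ and $g_{n+1}$ only through the product $g_ig_{n+1}$; substituting $h_i=g_ig_{n+1}$ by right invariance of Haar measure and integrating the now-trivial $g_{n+1}$ gives $\int_G d\mu_G=1$ times the same base integral for $s$. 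Summing the $n+1$ slots for a fixed $s$ therefore yields the total weight $I+\underbrace{1+\cdots+1}_{n}=I+n$.

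Finally I would collect the constants. The telescoping identity $(I)_{n+1}=(I)_n(I+n)$ turns the prefactor $\tfrac{I+n}{(I)_{n+1}}$ into $\tfrac{1}{(I)_n}$, so the left-hand side reduces to $\tfrac{1}{(I)_n}\sum_{s\in S_n}\int_{G^n} f\,\prod_{\text{cyc of }s}t\,d\mu_G^{\otimes n}$, which is precisely $\int_{S_n(G)}f\,d\mu^{\Ewens}_{S_n(G)}$ once one inserts $P^{\Ewens}_{S_n(G)}=n!\,\prod_{\text{cyc}}t/(I)_n$. The main obstacle is the bookkeeping of the fiber: enumerating the $n+1$ slots correctly and verifying that the single fixed-point slot contributes the factor $I$ while each of the $n$ insertion slots contributes $1$, so that the total slot-weight $I+n$ cancels exactly against the ratio of Pochhammer symbols. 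Everything else is the routine Haar-invariance change of variables and the already-established color-preservation property of $p_{n,n+1}$.
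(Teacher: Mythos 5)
Your proposal is correct and takes essentially the same route as the paper's proof: both decompose according to whether $n+1$ is a fixed point of the permutation (contributing the factor $I$ after integrating $t(g_{n+1})$ over $G$) or is inserted into one of the $n$ slots of an existing cycle (each contributing $1$, by color preservation and Haar invariance), and both conclude via the telescoping $(I)_{n+1}=(I)_n(I+n)$. The only difference is presentational: you state the claim as a pushforward identity against test functions, whereas the paper integrates directly over the fiber $O(x_n)=O'(x_n)\sqcup O''(x_n)$; the underlying computation is identical.
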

\begin{proof}Fix $x_n=\left(\left(g_1,\ldots,g_n\right),s\right)\in S_n\left(G\right)$. Assume that
$x_{n+1}'=\left(\left(g_1',\ldots,g_{n+1}'\right),s'\right)\in S_{n+1}(G)$ is such that $p_{n,n+1}(x_{n+1}')=x_n$, and
such that $n+1$ is a fixed point of $s'$. Then $x_{n+1}'$ can be written as
$$
x_{n+1}'=\left(\left(g_1,\ldots,g_n,g_{n+1}'\right),s(n+1)\right).
$$
Denote by $O'(x_n)$ the subset of $O(x_n)$ consisting of all such elements. If $x_{n+1}'\in O'(x_n)$, then
\begin{equation}\label{10.1.5}
\left[x_{n+1}'\right](c)=\left\{
\begin{array}{ll}
\left[x_{n}\right](c)+1, &  g_{n+1}'\in c,\\
\left[x_{n}\right](c), & \mbox{otherwise}.
\end{array}
\right.
\end{equation}
This gives
\begin{equation}\label{10.1.6}
\prod\limits_{c\in\left[G\right]}\left(z(c)\right)^{\left[x_{n+1}'\right](c)}
\left(\overline{z(c)}\right)^{\left[x_{n+1}'\right](c)}
=z\left(g_{n+1}'\right)\overline{z\left(g_{n+1}'\right)}
\prod\limits_{c\in\left[G\right]}\left(z(c)\right)^{\left[x_{n}\right](c)}
\left(\overline{z(c)}\right)^{\left[x_{n}\right](c)},
\end{equation}
where we have used the fact that  $z:\; G\rightarrow \C\setminus\{0\}$ is a central function.
Taking into account (\ref{10.1.6}) we find
\begin{equation}\label{O(1)}
\begin{split}
&\int\limits_{O'(x_n)}d\mu^{\Ewens}_{S_{n+1}(G)}(x_{n+1}')=\frac{\int\limits_{G}z\left(g_{n+1}'\right)\overline{z\left(g_{n+1}'\right)}d\mu_{G}\left(g_{n+1}'\right)}
{I+n}d\mu_{S_n(G)}^{\Ewens}(x_n)\\
&=\frac{I}{I+n}d\mu_{S_n(G)}^{\Ewens}\left(x_n\right),
\end{split}
\end{equation}
where we have used the explicit form of the Ewens measures $\mu_{S_n(G)}^{\Ewens}$, see Definition \ref{Definition2.1}.

Assume that
$x_{n+1}'=\left(\left(g_1',\ldots,g_{n+1}'\right),s'\right)\in S_{n+1}(G)$ is such that $p_{n,n+1}(x_{n+1}')=x_n$, and such that $s'$ has the form
\begin{equation}
s'=\left(i_1i_2\ldots i_r\right)\ldots\left(\nu_1\nu_2\ldots\nu_mn+1\nu_{m+1}\ldots\nu_l\right)\ldots\left(j_1j_2\ldots j_p\right),
\end{equation}
i.e. $n+1$ is situated inside of an existing cycle of $s$. Then $s$ is given by
\begin{equation}
s=\left(i_1i_2\ldots i_r\right)\ldots\left(\nu_1\nu_2\ldots\nu_m\nu_{m+1}\ldots\nu_l\right)\ldots\left(j_1j_2\ldots j_p\right),
\end{equation}
and
\begin{equation}
g_1=g_1',\ldots,g_{\nu_{m+1}}=g'_{\nu_{m+1}}g_{n+1}',\ldots,g_n=g_n'.
\end{equation}
Denote by $O''(x_n)$ the set of all such elements $x_{n+1}'$. Since the canonical projection preserves the colors of
cycles, and since for the elements of $O''(x_n)$ the number of cycles in $s'$ is equal to the number of cycles in $s$, we find
\begin{equation}\label{O(2)}
\begin{split}
&\int\limits_{O''(x_n)}d\mu^{\Ewens}_{S_{n+1}(G)}(x_{n+1}')=\frac{n\int\limits_{G}d\mu_{G}\left(g_{n+1}'\right)}
{I+n}d\mu_{S_n(G)}^{\Ewens}(x_n)\\
&=\frac{n}{I+n}d\mu_{S_n(G)}^{\Ewens}\left(x_n\right).
\end{split}
\end{equation}
Here we have used the fact that there are $n$ possibilities to insert $n+1$ into an existing cycle of $s$, $s\in S_{n}$. Finally,
\begin{equation}
\begin{split}
&\int\limits_{O(x_n)}d\mu^{\Ewens}_{S_{n+1}(G)}(x_{n+1}')=
\int\limits_{O'(x_n)}d\mu^{\Ewens}_{S_{n+1}(G)}(x'_{n+1})+\int\limits_{O''(x_n)}d\mu^{\Ewens}_{S_{n+1}(G)}(x_{n+1}')\\
&=d\mu_{S_n(G)}^{\Ewens}\left(x_n\right),
\end{split}
\end{equation}
as it follows from equations (\ref{O(1)}) and (\ref{O(2)}).
\end{proof}
\begin{prop} The projection $p_{n,n+1}:\; S_{n+1}\left(G\right)\longrightarrow S_n\left(G\right)$ is equivariant with respect to the two-sided action of $S_n\left(G\right)$, i.e.
$$
p_{n,n+1}\left((\kappa,\pi)(g,s)(h,t)\right)=(\kappa,\pi)p_{n,n+1}\left((g,s)\right)(h,t),
$$
for each $(g,s)\in G_{n+1}(G)$, and each $(\kappa,\pi)\in S_n(G)$, $(h,t)\in S_n(G)$.
\end{prop}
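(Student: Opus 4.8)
The plan is to reduce the two-sided identity to two one-sided identities and then verify each by a direct computation that compares the $S_n$-component and the $n$ $G$-components of $p_{n,n+1}$ separately. Throughout I regard $(\kappa,\pi),(h,t)\in S_n(G)$ as elements of $S_{n+1}(G)$ via the natural embedding $((\kappa_1,\ldots,\kappa_n),\pi)\mapsto((\kappa_1,\ldots,\kappa_n,e),\pi)$, where $e\in G$ is the identity and $\pi$ is viewed as the element of $S_{n+1}$ fixing $n+1$; this is precisely the meaning of the products in the statement. To compare two elements of $S_n(G)$ I will use that the $i$-th $G$-coordinate of $(a,\sigma)(b,\tau)$ is $a_i\,b_{\sigma^{-1}(i)}$ and that the permutation part is $\sigma\tau$.

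First I would observe that, by associativity in $S_{n+1}(G)$, it suffices to prove the \emph{left} identity $p_{n,n+1}((\kappa,\pi)(g,s))=(\kappa,\pi)\,p_{n,n+1}((g,s))$ and the \emph{right} identity $p_{n,n+1}((g,s)(h,t))=p_{n,n+1}((g,s))(h,t)$ for every $(g,s)\in S_{n+1}(G)$. Indeed, applying the right identity to $((\kappa,\pi)(g,s))(h,t)$ and then the left identity to $(\kappa,\pi)(g,s)$ yields the two-sided identity. Both one-sided identities hold for arbitrary $(g,s)$, so no extra hypotheses are needed for this splicing.

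For each one-sided identity I would split into the two cases of the definition of $p_{n,n+1}$: the case where $n+1$ is a fixed point, and the case (\ref{3.1}) where $n+1$ lies in a nontrivial cycle $i_1\to\cdots\to i_m\to n+1\to i_{m+1}\to\cdots\to i_r$. The key structural remark is that, because $\pi$ and $t$ fix $n+1$, the product permutations $\pi s$ and $s t$ fix $n+1$ exactly when $s$ does, so the two cases are respected by multiplication. Moreover the neighbours of $n+1$ transform controllably: for $\pi s$ the predecessor of $n+1$ stays $i_m$ while the successor becomes $\pi(i_{m+1})$, and for $st$ the successor stays $i_{m+1}$ while the predecessor becomes $t^{-1}(i_m)$. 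Matching the $S_n$-parts on both sides is then the short check that removing $n+1$ commutes with pre- and post-composition by a permutation fixing $n+1$, and the fixed-point case is immediate.

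The main obstacle is the nontrivial-cycle case for the $G$-coordinate at the \emph{merge site}, i.e. at the successor of $n+1$, where the definition (\ref{3.3}) splices in the factor $g_{n+1}$. Here I expect the argument to hinge on two cancellations produced by the embedding. For the right identity, multiplying first and then projecting produces at position $i_{m+1}$ the factor $(g_{i_{m+1}}h_{s^{-1}(i_{m+1})})(g_{n+1}h_{i_m})$, whereas projecting first and then multiplying gives $(g_{i_{m+1}}g_{n+1})h_{i_m}$; these agree because $s^{-1}(i_{m+1})=n+1$ and $h_{n+1}=e$, so the spurious factor $h_{s^{-1}(i_{m+1})}$ collapses to $e$. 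For the left identity the matching is cleaner still: the $(n+1)$-coordinate of $(\kappa,\pi)(g,s)$ equals $\kappa_{n+1}g_{n+1}=g_{n+1}$ since $\kappa_{n+1}=e$ and $\pi$ fixes $n+1$, so both orders of operations produce $\kappa_{\pi(i_{m+1})}g_{i_{m+1}}g_{n+1}$ at the merge site $\pi(i_{m+1})$. The remaining $G$-coordinates away from the merge site are untouched by the splice and match immediately. Assembling these computations gives the two one-sided identities, and hence the proposition.
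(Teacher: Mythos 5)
Your proposal is correct, but note that the paper offers no argument of its own to compare against: its ``proof'' of this proposition is a single sentence deferring to Proposition 2.1 of Strahov \cite{StrahovIsr25} (the finite-group case). Your computation therefore supplies precisely the details the paper leaves to a citation, and I have checked that they are sound. The reduction via associativity to the left identity $p_{n,n+1}((\kappa,\pi)(g,s))=(\kappa,\pi)\,p_{n,n+1}((g,s))$ and the right identity $p_{n,n+1}((g,s)(h,t))=p_{n,n+1}((g,s))(h,t)$ is valid, since both one-sided identities are proved for arbitrary $(g,s)\in S_{n+1}(G)$. The case dichotomy is respected because $\pi$ and $t$ fix $n+1$, so $\pi s$ and $st$ fix $n+1$ exactly when $s$ does; your identification of the neighbours of $n+1$ in $\pi s$ and $st$ (successor $\pi(i_{m+1})$, respectively predecessor $t^{-1}(i_m)$) is right, and the accompanying permutation-part identities, namely that removing $n+1$ from $st$ gives $s't$ and removing it from $\pi s$ gives $\pi s'$ (where $s'$ denotes $s$ with $n+1$ deleted from its cycle), do hold. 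The two merge-site cancellations are also correct, including the order of the factors: the projection in the paper produces $g_{i_{m+1}}g_{n+1}$ at position $i_{m+1}$ (in that order), and both of your computations preserve this order, which matters since $G$ need not be abelian. The only point worth making explicit is the fixed-point case, where $p_{n,n+1}$ simply discards the $(n+1)$-st coordinate; there one should note that on both sides the discarded entry is the same element $g_{n+1}$ (because $\kappa_{n+1}=e$ and $h_{n+1}=e$), so no information mismatch occurs --- but this is indeed, as you say, immediate.
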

\begin{proof}
The proof is the same as in the case of wreath products of a finite group $G$ with the symmetric groups
$S_n$, see Proposition 2.1 in Strahov \cite{StrahovIsr25}.
\end{proof}
%%%%%%%%%%%%%%%%%%%%%%%%%%%%%%%%%%%%%%%%%%%%%%%%%%%%%%%%%%%%%%%%%%%%%%%%%%%%%%%%%%%%%%%
%%%%%%%%%%%%%%%%%%%%%%%%%%%%%%%%%%%%%%%%%%%%%%%%%%%%%%%%%%%%%%%%%%%%%%%%%%%%%%%%%%%%%%%%%%%
%%%%%%%%%%%%%%%%%%%%%%%%%%%%%%%%%%%%%%%%%%%%%%%%%%%%%%%%%%%%%%%%%%%%%%%%%%%%%%%%%%%%%%%%%%%%%%%%%%
\section{The space of $G$-virtual permutations}\label{THESPACEOFVIRTUALPERMUTATIONS}
%%%%%%%%%%%%%%%%%%%%%%%%%%%%%%%%%%%%%%%%%%%%%%%%%%%%%%%%%%%%%%%%%%%%%%%%%%%%%%%%%%%%%%%
%%%%%%%%%%%%%%%%%%%%%%%%%%%%%%%%%%%%%%%%%%%%%%%%%%%%%%%%%%%%%%%%%%%%%%%%%%%%%%%%%%%%%%%%%%%
%%%%%%%%%%%%%%%%%%%%%%%%%%%%%%%%%%%%%%%%%%%%%%%%%%%%%%%%%%%%%%%%%%%%%%%%%%%%%%%%%%%%%%%%%%%%%%%%%%
\subsection{The probability space $\left(\mathfrak{S}_{G},\mu_{\mathfrak{S}_G}^{\Ewens}\right)$}\label{Section4.1}
%%%%%%%%%%%%%%%%%%%%%%%%%%%%%%%%%%%%%%%%%%%%%%%%%%%%%%%%%%%%%%%%%%%%%%%%%%%%%%%%%%%%%%%
%%%%%%%%%%%%%%%%%%%%%%%%%%%%%%%%%%%%%%%%%%%%%%%%%%%%%%%%%%%%%%%%%%%%%%%%%%%%%%%%%%%%%%%%%%%
%%%%%%%%%%%%%%%%%%%%%%%%%%%%%%%%%%%%%%%%%%%%%%%%%%%%%%%%%%%%%%%%%%%%%%%%%%%%%%%%%%%%%%%%%%%%%%%%%%
The space $\mathfrak{S}_{G}$ of $G$-virtual permutations is introduced in the same way as in the case of a finite group $G$, see Strahov \cite{StrahovIsr25}, Section 2.3. Namely, we consider the sequence of canonical projections
$$
S_1(G)\overset{p_{1,2}}{\longleftarrow}\ldots\overset{p_{n-1,n}}{\longleftarrow} S_n(G)\overset{p_{n,n+1}}{\longleftarrow} S_{n+1}(G)\overset{p_{n+1,n+2}}{\longleftarrow}\ldots,
$$
and set
$$
\mathfrak{S}_G=\underset{\longleftarrow}{\lim}\;S_n(G).
$$
Thus $\mathfrak{S}_G$ is the projective limit of the sets $S_n(G)$.  By definition, the elements of $\mathfrak{S}_{G}$
are sequences $x=\left(x_1,x_2,\ldots\right)$ such that $x_n\in S_n\left(G\right)$, and $p_{n,n+1}\left(x_{n+1}\right)=x_n$.

By Proposition \ref{PROPOSITIONCONISTECYEWENS} the family $\left(\mu_{S_n(G)}^{\Ewens}\right)_{n=1}^{\infty}$
is consistent with the canonical projection $p_{n,n+1}:\; S_{n+1}(G)\longrightarrow S_n(G)$.
Therefore, the sequence $\left(\mu^{\Ewens}_{S_n(G)}\right)_{n=1}^{\infty}$ gives rise to a probability measure
$$
\mu_{\mathfrak{S}_G}^{\Ewens}=\underset{\longleftarrow}{\lim}\;\mu^{\Ewens}_{S_n(G)}
$$
on the space of virtual $G$-permutations $\mathfrak{S}_{G}$.
\subsection{The wreath product $S_{\infty}(G)$. The action of $S_{\infty}(G)\times S_{\infty}(G)$
on $\mathfrak{S}_G$
}\label{SubSectionWreathInfty}
Let $S(\infty)$ be the group of finite permutations of the set $\left\{1,2,\ldots\right\}$, and $G$ be a compact group.
Denote by $D_{\infty}\left(G\right)$ the restricted direct product of $G$, i.e.
$$
D_{\infty}\left(G\right)=\left\{g=\left(g_1,g_2,\ldots\right)\in G^{\infty}:\;g_j=e_{G}\;\mbox{except  finitely many j's}
\right\}.
$$
Here $e_{G}$ denotes the unit element of $G$. The infinite symmetric group $S(\infty)$ acts on
$D_{\infty}\left(G\right)$ according to the formula
\begin{equation}\label{3Action}
s(g)=\left(g_{s^{-1}(1)},g_{s^{-1}(2)},\ldots \right),\; s\in S(\infty),\; g\in D_{\infty}\left(G\right).
\end{equation}
\begin{defn}
The wreath product $S_{\infty}(G)$ of a compact group $G$ with the infinite symmetric group $S(\infty)$ is the semidirect product of $D_{\infty}\left(G\right)$ with $S(\infty)$ defined by action (\ref{3Action}). The underlying set of $S_{\infty}(G)$ is $D_{\infty}\left(G\right)\times S(\infty)$, with multiplication defined by
$$
\left(g,s\right)\left(h,t\right)=\left(gs(h),st\right),
$$
where $g, h\in D_{\infty}\left(G\right)$, and $s, t\in S(\infty)$.
\end{defn}
Under the canonical inclusion $i_n:\; S_{n}(G)\longrightarrow S_{\infty}(G)$, we can regard $S_{\infty}(G)$ as $\bigcup_{n=1}^{\infty}S_n(G)$. In addition, the group $S_{\infty}(G)$ can be identified with the subset of $\mathfrak{S}_{G}$ consisting of stable sequences $\left(x_n\right)$ such that
$$
x_n=\left(g,s\right), \;\; \left(g,s\right)\in D_{\infty}\left(G\right)\times S(\infty),
$$
for sufficiently large $n$.

Let $W=\left(w_1,w_2\right)\in S_{\infty}(G)\times S_{\infty}(G)$. The right action of $S_{\infty}(G)\times S_{\infty}(G)$ on $\mathfrak{S}_{G}$ is defined as
$$
xW=y,\; x=\left(x_1,x_2,\ldots\right),\;y=\left(y_1,y_2,\ldots\right),
$$
where $y_n=w_2^{-1}x_nw_1$ for all large enough $n$. Specifically, the equality just written above holds whenever $n$ is so large that both $w_1$, $w_2$ are already in $S_n(G)$.
\subsection{The fundamental cocycles of the dynamical system $\left(\mathfrak{S}_G,S_{\infty}(G)\times S_{\infty}(G)\right)$.}
For each $c\in[G]$ define a function $C_c(x,W)$ on $\mathfrak{S}_G\times\left(S_{\infty}(G)\times S_{\infty}(G)\right)$ by
\begin{equation}\label{11.3.1}
C_c(x,W)=\left[p_n(xW)\right](c)-\left[p_n(x)\right](c).
\end{equation}
Here $p_n:\;\mathfrak{S}_G\longrightarrow S_n(G)$ is the natural projection, and the
action of $W\in S_{\infty}(G)\times S_{\infty}(G)$ on $x\in\mathfrak{S}_G$ is defined in Section
\ref{SubSectionWreathInfty}. In equation (\ref{11.3.1}) $n$ is large enough so that $W\in S_n(G)\times S_n(G)$.
\begin{prop}\label{Prop4.2} The quantities $C_c(x,W)$ defined by equation (\ref{11.3.1}) do not depend on $n$ provided that $n$ is so large that the element $W$ of $S_{\infty}(G)\times S_{\infty}(G)$
already belongs to $S_n(G)\times S_n(G)$.
\end{prop}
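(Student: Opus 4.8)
The plan is to show that the difference $C_c(x,W)$ is stable in $n$: once $W=(w_1,w_2)$ lies in $S_m(G)\times S_m(G)$, increasing $n$ from $m$ to any larger value changes neither $\left[p_n(xW)\right](c)$ nor $\left[p_n(x)\right](c)$ in a way that alters their difference. The key structural fact I would exploit is that the canonical projection $p_{n,n+1}$ \emph{preserves the colors of the cycles}, as established in Section~\ref{SECTIONCANONICALPROJECTION}. The natural projections satisfy $p_n = p_{n,n+1}\circ p_{n+1}$, so it suffices to prove the claim for the single step from $n+1$ to $n$, i.e. to compare $C_c(x,W)$ computed at level $n+1$ with the same quantity computed at level $n$, under the assumption $W\in S_n(G)\times S_n(G)$.

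\textbf{The single-step comparison.} Write $y=xW$, and let $x_{n+1}=p_{n+1}(x)$, $y_{n+1}=p_{n+1}(y)$, with $x_n=p_{n,n+1}(x_{n+1})$ and $y_n=p_{n,n+1}(y_{n+1})$. I would analyze how the integer $n+1$ sits inside the cycle structure of $x_{n+1}$ and of $y_{n+1}$. Because $W$ already belongs to $S_n(G)\times S_n(G)$, the action $y=xW$ defined in Section~\ref{SubSectionWreathInfty} has $y_{n+1}=w_2^{-1}x_{n+1}w_1$ with both $w_1,w_2$ supported on $\{1,\ldots,n\}$; hence $w_1,w_2$ fix $n+1$ and act trivially on the $(n+1)$th color coordinate. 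The crucial point is that applying $p_{n,n+1}$ removes the symbol $n+1$ from its cycle while \emph{keeping intact the color of that cycle}, by the color-preservation property. Thus for every color $c\in[G]$ the number of cycles of color $c$ either stays the same (when $n+1$ is a fixed point, where one trivial cycle disappears) or is unaffected (when $n+1$ sits inside a longer cycle). The decisive observation is that whatever change the removal of $n+1$ induces in $\left[p_{n+1}(x)\right](c)$ is \emph{exactly the same} change it induces in $\left[p_{n+1}(xW)\right](c)$, precisely because $w_1$ and $w_2$ fix $n+1$ and therefore $n+1$ occupies the \emph{same} position (fixed point, or interior of a cycle of the same color) in both $x_{n+1}$ and $y_{n+1}$.

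\textbf{Conclusion.} Consequently, passing from level $n+1$ to level $n$ subtracts the same quantity from $\left[p_{n+1}(xW)\right](c)$ and from $\left[p_{n+1}(x)\right](c)$, so the difference
\[
C_c(x,W)=\left[p_{n+1}(xW)\right](c)-\left[p_{n+1}(x)\right](c)
\]
is unchanged when computed at level $n$ instead of level $n+1$. Iterating this single-step invariance gives independence of $n$ for all $n\geq m$, where $m$ is the smallest index with $W\in S_m(G)\times S_m(G)$.

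\textbf{The main obstacle} I anticipate is a careful bookkeeping of the color of the cycle containing $n+1$ in the case where $w_1\neq w_2$: although both permutations fix the symbol $n+1$, they act on the $G$-components $g=(g_1,g_2,\ldots)$, and I must check that the color assigned to the cycle through $n+1$ in $xW$ matches the structure in $x$ closely enough that color-preservation under $p_{n,n+1}$ yields matching increments. This reduces to verifying that the two-sided multiplication by an element of $S_n(G)\times S_n(G)$ commutes with the cycle-removal map on the $(n+1)$th coordinate, which is exactly the equivariance of $p_{n,n+1}$ recorded in the preceding proposition; I would invoke that equivariance to close the argument cleanly rather than recompute the cycle colors by hand.
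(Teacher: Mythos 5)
Your argument is correct, but it is genuinely different from what the paper does: the paper's entire proof of this proposition is a one-line citation, declaring it a word-by-word repetition of the finite-group case (Theorem 4.1 in Strahov \cite{StrahovIsr25}). You instead give a self-contained proof. The reduction to a single step from level $n+1$ to level $n$, the case split on whether $n+1$ is a fixed point of the permutation part or lies inside a longer cycle, the observation that $w_1,w_2\in S_n(G)\times S_n(G)$ fix the symbol $n+1$ and have trivial $(n+1)$-th $G$-coordinate, and the appeal to color preservation of $p_{n,n+1}$ together with the equivariance $p_{n,n+1}\left(w_2^{-1}\xi w_1\right)=w_2^{-1}p_{n,n+1}(\xi)w_1$ (which also guarantees that the level-$n$ and level-$(n+1)$ descriptions of $xW$ are compatible) --- this is precisely the bookkeeping that the cited reference performs; what your proof buys is that the present paper would become self-contained on this point.

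One detail is stated more strongly than is true, though this does not damage the proof. In the case where $n+1$ lies inside a longer cycle you assert that $n+1$ sits in a cycle ``of the same color'' in $x_{n+1}$ and in $y_{n+1}=w_2^{-1}x_{n+1}w_1$. This can fail: take $n=1$, $x_2=\left(\left(g_1,g_2\right),(12)\right)$ and $w_1=\left(\left(h_1,e_G\right),e\right)$, $w_2$ trivial; then the cycle $(12)$ has color given by the class of $g_2g_1$ in $x_2$, but by the class of $g_2h_1g_1$ in $x_2w_1$. Fortunately your argument never needs these two colors to agree: when $n+1$ is interior to a long cycle in both elements, the projection $p_{n,n+1}$ removes it without altering the number of cycles of any color in either element separately (this is exactly the color-preservation property applied to $x_{n+1}$ and to $y_{n+1}$ individually), so the increment is $0$ for every $c$ on both sides of the difference. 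The colors must and do agree only in the fixed-point case, where both elements lose one $1$-cycle whose color is the class of $g_{n+1}$: the $(n+1)$-th $G$-coordinate of $y_{n+1}$ equals $g_{n+1}$ because $h_{n+1}=\kappa_{n+1}=e_G$ and $t,\pi$ fix $n+1$. With this reading, your single-step identity $\left[y_{n+1}\right](c)-\left[x_{n+1}\right](c)=\left[y_n\right](c)-\left[x_n\right](c)$ holds for every $c\in[G]$, and iteration over $n\geq m$ finishes the proof.
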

\begin{proof} The proof of this Proposition is a word-by-word repetition of the arguments in the case of a finite $G$, see the proof of Theorem 4.1 in Strahov \cite{StrahovIsr25}.
\end{proof}
\begin{prop}\label{Prop12.1.1}
For each continuous function $z:\; [G]\longrightarrow\C\setminus\{0\}$,
the measure $\mu_{\mathfrak{S}_G}^{\Ewens}$ is quasi-invariant with respect to the action of $S_{\infty}(G)\times S_{\infty}(G)$ on $\mathfrak{S}_G$. More precisely,
the Radon-Nikod$\acute{y}$m derivative is given by
\begin{equation}
\frac{d\mu^{\Ewens}_{\mathfrak{S}_G}(xW)}{d\mu_{\mathfrak{S}_G}^{\Ewens}(x)}=\prod\limits_{c\in [G]}\left(z(c)\overline{z(c)}\right)^{C_c(x,W)},
\end{equation}
where $x\in\mathfrak{S}_G$, $W\in S_{\infty}(G)\times S_{\infty}(G)$, and $C_c(x,W)$ is defined by equation  (\ref{11.3.1}).
\end{prop}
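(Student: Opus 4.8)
The plan is to reduce the statement about the projective-limit measure $\mu^{\Ewens}_{\mathfrak{S}_G}$ to a finite-level computation with the measures $\mu^{\Ewens}_{S_n(G)}$, and then to exploit the explicit density $P^{\Ewens}_{S_n(G)}$ from Definition \ref{Definition2.1}. First I would fix $W=(w_1,w_2)\in S_\infty(G)\times S_\infty(G)$ and choose $n$ so large that $w_1,w_2\in S_n(G)$; by the definition of the right action in Section \ref{SubSectionWreathInfty}, the map $x\mapsto xW$ acts at level $n$ by $y_n=w_2^{-1}x_nw_1$, and this level-$n$ map is compatible with the projections (using the equivariance of $p_{n,n+1}$ established in the preceding proposition). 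Because the projective-limit measure is characterized by its pushforwards $p_n{}_*\mu^{\Ewens}_{\mathfrak{S}_G}=\mu^{\Ewens}_{S_n(G)}$, it suffices to compute, for each cylinder observable depending only on $x_n$, the Radon--Nikod\'ym factor of the transformation $R_{w_1,w_2}:x_n\mapsto w_2^{-1}x_nw_1$ of $S_n(G)$ with respect to $\mu^{\Ewens}_{S_n(G)}$.

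The key step is then the finite-level density ratio. Write $d\mu^{\Ewens}_{S_n(G)}=P^{\Ewens}_{S_n(G)}\,d\mu_{S_n(G)}$, where $\mu_{S_n(G)}$ is the product of Haar measures with the uniform measure on $S_n$. I would observe that $\mu_{S_n(G)}$ is invariant under left and right translation by elements of $S_n(G)$ (Haar invariance on the $G^n$ factor together with invariance of counting measure on $S_n$), so the entire Jacobian comes from the explicit prefactor $P^{\Ewens}_{S_n(G)}$. Since both the denominator $I(I+1)\cdots(I+n-1)$ and the factor $n!$ in $P^{\Ewens}_{S_n(G)}$ are constants independent of $x_n$, the ratio $P^{\Ewens}_{S_n(G)}(w_2^{-1}x_nw_1)/P^{\Ewens}_{S_n(G)}(x_n)$ collapses to
\[
\prod_{c\in[G]}\bigl(z(c)\overline{z(c)}\bigr)^{[w_2^{-1}x_nw_1](c)-[x_n](c)}.
\]
By the definition \eqref{11.3.1} of the cocycle, the exponent is exactly $C_c(x,W)$, and by Proposition \ref{Prop4.2} it does not depend on the chosen $n$. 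This identifies the density at every finite level with the claimed infinite-level formula.

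Finally I would assemble the finite-level identities into the statement for $\mu^{\Ewens}_{\mathfrak{S}_G}$: for any bounded cylinder function $f$ factoring through $p_n$, the change-of-variables computation at level $n$ gives $\int f(xW)\,d\mu^{\Ewens}_{\mathfrak{S}_G}(x)=\int f(x)\prod_{c}(z(c)\overline{z(c)})^{C_c(x,W)}\,d\mu^{\Ewens}_{\mathfrak{S}_G}(x)$, and since cylinder functions are dense this establishes both the quasi-invariance and the stated Radon--Nikod\'ym derivative. The main obstacle I anticipate is \emph{not} the algebra of the density ratio, which is routine once the color-exponents $[x](c)$ are tracked, but rather the care needed to pass rigorously between the finite levels and the projective limit: one must verify that the transformation $R_{w_1,w_2}$ is genuinely well defined and measurable on $\mathfrak{S}_G$ and commutes with the projections for all sufficiently large $n$, so that the $n$-independent cocycle of Proposition \ref{Prop4.2} really does control the global density. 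Once that consistency is in place, the invariance of $\mu_{S_n(G)}$ under two-sided translation does the rest.
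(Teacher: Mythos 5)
Your strategy coincides with the paper's own proof: reduce to finite level via cylinder sets/functions, use Proposition \ref{Prop4.2} to see that the cocycle factor depends only on $p_n(x)$, compute the ratio of the explicit densities of Definition \ref{Definition2.1}, and let the two-sided invariance of the reference measure $\mu_{S_n(G)}$ absorb the translation. The paper's argument (which verifies equation (\ref{12.1.3}) on the sets $V_n(y)$) is exactly this in set form, and your finite-level ratio $P^{\Ewens}_{S_n(G)}(w_2^{-1}x_nw_1)/P^{\Ewens}_{S_n(G)}(x_n)=\prod_{c\in[G]}\left(z(c)\overline{z(c)}\right)^{C_c(x,W)}$ is correct and is the heart of the matter.

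However, your final assembly contains a direction error that makes the displayed identity false as written. The quantity you computed is the density of the \emph{translated-set} measure $V\mapsto\mu^{\Ewens}_{\mathfrak{S}_G}(VW)$ with respect to $\mu^{\Ewens}_{\mathfrak{S}_G}$; that is precisely the content of (\ref{12.1.3}) and of the Proposition. It is \emph{not} the density of the \emph{pushforward} of $\mu^{\Ewens}_{\mathfrak{S}_G}$ under $x\mapsto xW$, which is what your identity $\int f(xW)\,d\mu^{\Ewens}_{\mathfrak{S}_G}(x)=\int f(x)\prod_{c}\left(z(c)\overline{z(c)}\right)^{C_c(x,W)}d\mu^{\Ewens}_{\mathfrak{S}_G}(x)$ asserts. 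Carrying out your own change of variables $u=x_nW$, $x_n=uW^{-1}$ honestly gives $\int f(xW)\,d\mu^{\Ewens}_{\mathfrak{S}_G}(x)=\int f(u)\prod_{c}\left(z(c)\overline{z(c)}\right)^{C_c(u,W^{-1})}d\mu^{\Ewens}_{\mathfrak{S}_G}(u)$, with $W^{-1}$, not $W$, in the exponent, and the two densities genuinely differ: already for trivial $G$, where the factor is $t^{\,l(x_nw)-l(x_n)}$ with $t=z\bar{z}$ and $l$ the number of cycles, taking $x_4=(1234)$, $w_1=(123)$, $w_2=e$ gives $C(x,W)=0$ while $C(x,W^{-1})=2$. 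Since $W\mapsto W^{-1}$ is a bijection of $S_{\infty}(G)\times S_{\infty}(G)$, your argument does yield the Proposition after relabeling; equivalently, you could keep $W$ and state the conclusion as $\int f(x)\,d\mu^{\Ewens}_{\mathfrak{S}_G}(x)=\int f(xW)\prod_{c}\left(z(c)\overline{z(c)}\right)^{C_c(x,W)}d\mu^{\Ewens}_{\mathfrak{S}_G}(x)$, or verify (\ref{12.1.3}) directly on the cylinder sets $V_n(y)$, using $V_n(y)W=V_n(yW)$, as the paper does. But as stated, your last display conflates the translated-set measure with the pushforward measure, and that step needs to be corrected.
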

\begin{proof} The statement of Proposition \ref{Prop12.1.1} is equivalent to the fact that the equation
\begin{equation}\label{12.1.3}
d\mu_{\mathfrak{S}_G}^{\Ewens}\left(VW\right)=\int\limits_{V}\prod\limits_{c\in [G]}\left(z(c)\overline{z(c)}\right)^{C_{c}(x,W)}d\mu_{\mathfrak{S}_G}^{\Ewens}(x)
\end{equation}
holds true for each Borel set $V$ of $\mathfrak{S}_G$. The Borel sets of $\mathfrak{S}_G$ are generated by cylinder sets, and each cylinder set is a disjoint union of the sets of the form
\begin{equation}\label{12.1.4}
V_n(y)=\left\{\left(x_1,x_2,\ldots\right)\in\mathfrak{S}_G,\; x_n=y\right\},\; y\in S_n(G).
\end{equation}
Thus, fix $W\in S_{\infty}(G)\times S_{\infty}(G)$, and choose $m$ so large that $W\in S_m(G)\times S_m(G)$. Our aim is to show that for each $n\geq m$ and for each $y\in S_n(G)$ condition (\ref{12.1.3}) is true for $V=V_n(y)$, where $V_n(y)$ is defined by
(\ref{12.1.4}). From Proposition \ref{Prop4.2} we conclude that
\begin{equation}\label{12.1.5}
\prod\limits_{c\in[G]}\left(z(c)\overline{z(c)}\right)^{C_c(x,W)}
=\prod\limits_{c\in[G]}\left(z(c)\overline{z(c)}\right)^{\left[yW\right](c)-\left[y\right](c)},
\end{equation}
for each $x\in V_n(y)$. This gives
\begin{equation}\label{12.1.6}
\begin{split}
&\int_{V_n(y)}\prod\limits_{c\in[G]}\left(z(c)\overline{z(c)}\right)^{C_c(x,W)}
d\mu^{\Ewens}_{\mathfrak{S}_G}(x)\\
&=\prod\limits_{c\in[G]}\left(z(c)\overline{z(c)}\right)^{\left[yW\right](c)-[y](c)}d\mu^{\Ewens}_{\mathfrak{S}_G}(V_n(y)).
\end{split}
\end{equation}
By the very definition of $\mu_{\mathfrak{S}_G}^{\Ewens}$ in Section \ref{Section4.1} we find
\begin{equation}
d\mu_{\mathfrak{S}_G}^{\Ewens}\left(V_n(y)\right)
=d\mu_{\mathfrak{S}_G}^{\Ewens}(y).
\end{equation}
Taking into account the definition of $\mu_{\mathfrak{S}_G}^{\Ewens}$ (see Definition \ref{Definition2.1}), we find that the right-hand side of (\ref{12.1.6})
is equal to $d\mu_{\mathfrak{S}_G}^{\Ewens}(yW )$. On the other hand,
$$
d\mu_{\mathfrak{S}_G}^{\Ewens}\left(V_n(y)W\right)=d\mu_{\mathfrak{S}_G}^{\Ewens}\left(V_n(yW)\right)=d\mu_{S_n(G)}^{\Ewens}(yW),
$$
so equation (\ref{12.1.3}) holds true  for $V=V_n(y)$.
\end{proof}
%%%%%%%%%%%%%%%%%%%%%%%%%%%%%%%%%%%%%%%%%%%%%%%%%%%%%%%%%%%%%%%%%%%%%%%%%%%
%%%%%%%%%%%%%%%%%%%%%%%%%%%%%%%%%%%%%%%%%%%%%%%%%%%%%%%%%%%%%%%%%%%%%%%%%%%%%%%%
%%%%%%%%%%%%%%%%%%%%%%%%%%%%%%%%%%%%%%%%%%%%%%%%%%%%%%%%%%%%%%%%%%%%%%%%%%%%%%%%%
\section{Generalized regular representations and their characters}\label{SECTIONGENERALIZEDREGULARREPRESENTATIONS}
%%%%%%%%%%%%%%%%%%%%%%%%%%%%%%%%%%%%%%%%%%%%%%%%%%%%%%%%%%%%%%%%%%%%%%%%%%%
%%%%%%%%%%%%%%%%%%%%%%%%%%%%%%%%%%%%%%%%%%%%%%%%%%%%%%%%%%%%%%%%%%%%%%%%%%%%%%%%
%%%%%%%%%%%%%%%%%%%%%%%%%%%%%%%%%%%%%%%%%%%%%%%%%%%%%%%%%%%%%%%%%%%%%%%%%%%%%%%%%
In this Section we present a construction of generalized regular representations of $S_{\infty}(G)\times S_{\infty}(G)$ in the case of a compact group $G$. The construction is a generalization of that in Strahov
\cite{StrahovIsr25}, \S 6,  where the case of a finite group $G$ was considered. Ref. \cite{StrahovIsr25} contains a review of the basic notation and results used in harmonic analysis on big groups, and, in particular,
in Sections \ref{12.2.1} and \ref{Section12.4} below.
%%%%%%%%%%%%%%%%%%%%%%%%%%%%%%%%%%%%%%%%%%%%%%%%%%%%%%%%%%%%%%%%%%%%%%%%%%%
%%%%%%%%%%%%%%%%%%%%%%%%%%%%%%%%%%%%%%%%%%%%%%%%%%%%%%%%%%%%%%%%%%%%%%%%%%%%%%%%
%%%%%%%%%%%%%%%%%%%%%%%%%%%%%%%%%%%%%%%%%%%%%%%%%%%%%%%%%%%%%%%%%%%%%%%%%%%%%%%%%
\subsection{Definition of $\left(T_z, L^2\left(\mathfrak{S}_G,\mu^{\Ewens}_{\mathfrak{S}_G}\right)\right)$}
%%%%%%%%%%%%%%%%%%%%%%%%%%%%%%%%%%%%%%%%%%%%%%%%%%%%%%%%%%%%%%%%%%%%%%%%%%%
%%%%%%%%%%%%%%%%%%%%%%%%%%%%%%%%%%%%%%%%%%%%%%%%%%%%%%%%%%%%%%%%%%%%%%%%%%%%%%%%
%%%%%%%%%%%%%%%%%%%%%%%%%%%%%%%%%%%%%%%%%%%%%%%%%%%%%%%%%%%%%%%%%%%%%%%%%%%%%%%%%
\begin{defn}\label{12.2.1}
Let $z:\;[G]\longrightarrow\C\setminus\{0\}$ be a continuous function and let $
\mu^{\Ewens}_{\mathfrak{S}_G}$ be the Ewens measure on $\mathfrak{S}_G$ associated with this function. Set
\begin{equation}\label{12.2.2}
\left(T_z\left(W\right)f\right)(x)=f(xW)\prod\limits_{c\in[G]}\left(z(c)\right)^{C_c(x,W)},
\end{equation}
where $W\in S_{\infty}(G)\times S_{\infty}(G)$, $f\in L^2\left(\mathfrak{S}_G,\mu_{\mathfrak{S}_G}^{\Ewens}\right)$, the function $C_c(x,W)$ is defined by equation (\ref{11.3.1}),
and the action of $S_{\infty}(G)\times S_{\infty}(G)$ on $\mathfrak{S}_G$ is defined in Section \ref{SubSectionWreathInfty}. Since
\begin{equation}\label{12.3.1}
C_c\left(x,W_2W_1\right)=C_c\left(xW_2,W_1\right)+C_c\left(x,W_2\right)
\end{equation}
is satisfied for all $W_1, W_2\in S_{\infty}(G)\times S_{\infty}(G)$, and all $x\in\mathfrak{S}_G$,  equation (\ref{12.2.2}) correctly defines a representation
$\left(T_z, L^2\left(\mathfrak{S}_G,\mu^{\Ewens}_{\mathfrak{S}_G}\right)\right)$
of $S_{\infty}(G)\times S_{\infty}(G)$. This representation is called the \textit{generalized regular representation} of $S_{\infty}(G)\times S_{\infty}(G)$.
\end{defn}
Note that Proposition \ref{Prop12.1.1} implies that this representation is unitary. Moreover, if $z(c)=1$ for each $c\in [G]$, then this representation becomes the biregular representation of $S_{\infty}(G)\times S_{\infty}(G)$.
It is important that $\left(T_z, L^2\left(\mathfrak{S}_G,\mu^{\Ewens}_{\mathfrak{S}_G}\right)\right)$ can be understood as the inductive limit of the representations
\begin{equation}\label{InductiveSequence}
T_z\biggr|_{L^2\left(S_1(G),\mu^{\Ewens}_{S_1(G)}\right)},\;\;
T_z\biggr|_{L^2\left(S_2(G),\mu^{\Ewens}_{S_2(G)}\right)},\ldots,
\end{equation}
see Section 6 in Strahov \cite{StrahovIsr25}.
%%%%%%%%%%%%%%%%%%%%%%%%%%%%%%%%%%%%%%%%%%%%%%%%%%%%%%%%%%%%%%%%%%%%%%%%%%%
%%%%%%%%%%%%%%%%%%%%%%%%%%%%%%%%%%%%%%%%%%%%%%%%%%%%%%%%%%%%%%%%%%%%%%%%%%%%%%%%%%%%%%%%%%%%%%%%%%%%%%%%%%%%%%%%%%%%%%%%%%%%%%%%%%%%%%%%%%%%%%%%%%%%%%%%%%%%%%%%%
\subsection{The character $\chi_{z}$ of $T_z$}\label{Section12.4}
%%%%%%%%%%%%%%%%%%%%%%%%%%%%%%%%%%%%%%%%%%%%%%%%%%%%%%%%%%%%%%%%%%%%%%%%%%%
%%%%%%%%%%%%%%%%%%%%%%%%%%%%%%%%%%%%%%%%%%%%%%%%%%%%%%%%%%%%%%%%%%%%%%%%%%%%%%%%%%%%%%%%%%%%%%%%%%%%%%%%%%%%%%%%%%%%%%%%%%%%%%%%%%%%%%%%%%%%%%%%%%%%%%%%%%%%%%%%%
Let $\zeta_0\in L^2\left(\mathfrak{S}_G,\mu^{\Ewens}_{\mathfrak{S}_G}\right)$
be the function identically equal  to $1$ on $\mathfrak{S}_G$. This function is invariant under the action of
$$
\diag\left(S_{\infty}(G)\right)=\left\{W=(w,w):\;w\in S_{\infty}(G)\right\},
$$
and has the norm $1$. Therefore, $\left(T_z,\zeta_0\right)$ is a spherical representation of the Gelfand pair $\left(S_{\infty}(G)\times S_{\infty}(G),\diag\left(S_{\infty}\left(G\right)\right)\right)$. The corresponding spherical functions is defined by
\begin{equation}\label{12.4.1}
\Phi_z(W)=\left<T_z(W)\zeta_0,\zeta_0\right>_{L^2\left(\mathfrak{S}_G,\mu_{\mathfrak{S}_G}^{\Ewens}\right)},\;\; W\in S_{\infty}(G)\times S_{\infty}(G).
\end{equation}
The \textit{character} $\chi_z$ of $T_z$ is defined by
\begin{equation}\label{12.4.2}
\chi_z(w)=\Phi_z\left(w,e_{S_{\infty}(G)}\right)=
\left<T_z\left(w,e_{S_{\infty}(G)}\right)\zeta_0,\zeta_0\right>_{L^2\left(\mathfrak{S}_G,\mu_{\mathfrak{S}_G}^{\Ewens}\right)},
\end{equation}
where $w\in S_{\infty}(G)$, and $e_{S_{\infty}(G)}$ is the unit element
of $S_{\infty}(G)$.

Our aim is to describe the expansion of $\chi_z|_{S_n(G)}$  into irreducible characters of $S_n(G)$. First, we note that $\zeta_0$ can be understood as an element of $L^2\left(S_n(G),\mu^{\Ewens}_{S_n(G)}\right)$
for each $n=1,2,\ldots$. Since $\mu_{\mathfrak{S}_G}^{\Ewens}=
\underset{\longleftarrow}{\lim}\mu_{S_n(G)}^{\Ewens}$, and since
$T_z$ is the inductive limit of representations  (\ref{InductiveSequence})
we can write
\begin{equation}\label{12.5.1}
\chi_z|_{S_n(G)}(x)=\left<T_z\left(x,e_{S_n(G)}\right)\zeta_0,\zeta_0\right>_{
L^2\left(S_n(G),\mu^{\Ewens}_{S_n(G)}\right)},\; x\in S_n(G).
\end{equation}
Second, introduce the function $F_z^{(n)}:\; S_n(G)\longrightarrow\C$ by
\begin{equation}\label{12.6.1}
F_z^{(n)}(x)=\frac{\left(n!\right)^{\frac{1}{2}}}{\left(\left(I\right)_n\right)^{\frac{1}{2}}}\;\prod\limits_{c\in [G]}
\left(z(c)\right)^{[x](c)},
\end{equation}
where $I=\int\limits_{G}|z(g)|^2d\mu_G(g)$, and $(I)_n=I(I+1)\ldots (I+n-1)$. In addition, let us introduce
the biregular representation $\Reg_n$ of $S_n(G)\times S_n(G)$. This representation acts in the space
$L^2\left(S_n(G),\mu_{S_n(G)}^{\Ewens}\right)$, and it is defined by
\begin{equation}\label{12.6.2}
\left(\Reg_n(Y)f\right)(x)=f(xY),\;\;Y=(y_1,y_2)\in S_n(G)\times S_n(G),\;\; xY=y_2^{-1}xy_1,
\end{equation}
where $x\in S_n(G)$.
\begin{prop}\label{Prop12.6.3}
We have
\begin{equation}\label{12.6.4}
\left(\Reg_n(Y)F_z^{(n)}\varphi\right)(x)=\left(F_z^{(n)}T_z(Y)\varphi\right)(x),
\end{equation}
for each $\varphi\in L^2\left(S_n(G),\mu_{S_n(G)}^{\Ewens}\right)$, each $Y\in S_n(G)\times S_n(G)$, and each $x\in S_n(G)$. Here $F_z^{(n)}$ denotes the operation of multiplication by $F_z^{(n)}(x)$.
\end{prop}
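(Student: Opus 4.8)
The plan is to verify the identity by a direct pointwise computation, unravelling both sides through the definitions and reducing everything to the defining property of the cocycle $C_c$.

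First I would expand the left-hand side. By the definition of $\Reg_n$ in (\ref{12.6.2}), applying $\Reg_n(Y)$ to the function $F_z^{(n)}\varphi$ and evaluating at $x$ gives $(F_z^{(n)}\varphi)(xY)=F_z^{(n)}(xY)\,\varphi(xY)$. For the right-hand side, the operator $T_z(Y)$ acts by (\ref{12.2.2}) as $(T_z(Y)\varphi)(x)=\varphi(xY)\prod_{c\in[G]}(z(c))^{C_c(x,Y)}$, and multiplication by $F_z^{(n)}$ then produces $F_z^{(n)}(x)\,\varphi(xY)\prod_{c\in[G]}(z(c))^{C_c(x,Y)}$. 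Both sides carry the common factor $\varphi(xY)$, which may be cancelled pointwise, so the claim reduces to the single scalar identity
\begin{equation*}
F_z^{(n)}(xY)=F_z^{(n)}(x)\prod_{c\in[G]}(z(c))^{C_c(x,Y)}.
\end{equation*}

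Next I would insert the explicit form (\ref{12.6.1}) of $F_z^{(n)}$. The prefactor $(n!)^{1/2}/((I)_n)^{1/2}$ depends only on $n$ and hence cancels between $F_z^{(n)}(xY)$ and $F_z^{(n)}(x)$; what remains is to show that $[xY](c)=[x](c)+C_c(x,Y)$ for every $c\in[G]$, since this is precisely the condition under which the two products of powers of $z(c)$ agree. This last equality is exactly the definition of the cocycle once we note that $x\in S_n(G)$, regarded as a stable sequence in $\mathfrak{S}_G$, satisfies $p_n(x)=x$, and likewise $p_n(xY)=xY$ because $xY=y_2^{-1}xy_1$ again lies in $S_n(G)$; substituting into (\ref{11.3.1}) gives $C_c(x,Y)=[xY](c)-[x](c)$, and the desired identity follows immediately.

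There is essentially no genuine obstacle here: the proposition is an intertwining identity built into the very design of $F_z^{(n)}$ and $C_c$, and the only point requiring care is the bookkeeping that makes $C_c(x,Y)$ meaningful for $x\in S_n(G)$ and $Y\in S_n(G)\times S_n(G)$. Concretely, one must check that the projection $p_n$ appearing in (\ref{11.3.1}) restricts to the identity on level-$n$ elements, so that the value of $C_c(x,Y)$ computed from the cocycle formula coincides with the naive difference of cycle-color counts; Proposition \ref{Prop4.2} confirms that this count is unambiguous and stable across levels, so no difficulty arises.
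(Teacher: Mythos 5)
Your proof is correct and follows essentially the same route as the paper: both expand the two sides via the definitions of $\Reg_n$ and $T_z$ and observe that the cocycle factor $\prod_{c\in[G]}(z(c))^{C_c(x,Y)}$ equals the ratio $F_z^{(n)}(xY)/F_z^{(n)}(x)$, since the prefactor $(n!)^{1/2}/((I)_n)^{1/2}$ cancels and $C_c(x,Y)=[xY](c)-[x](c)$ at level $n$. The only difference is presentational: the paper compresses this into writing the right-hand side as $F_z^{(n)}(x)\varphi(xY)\,F_z^{(n)}(xY)/F_z^{(n)}(x)$, whereas you spell out the underlying identity $[xY](c)=[x](c)+C_c(x,Y)$ and its justification via Proposition \ref{Prop4.2}.
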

\begin{proof} We represent the right-hand side of (\ref{12.6.4}) as
$$
F_z^{(n)}(x)\varphi\left(xY\right)\frac{F_z^{(n)}(xY)}{F_z^{(n)}(x)}=
\varphi\left(y_2^{-1}xy_1\right)F_z^{(n)}\left(y_2^{-1}xy_1\right).
$$
On the other hand,
$$
\left(\Reg_n(Y)F_z^{(n)}\varphi\right)(x)=F_z^{(n)}\left(y_2^{-1}xy_1\right)\varphi\left(y_2^{-1}xy_1\right),
$$
by the very definition of $\Reg_n$, see equation (\ref{12.6.2}).
\end{proof}
Using Proposition \ref{Prop12.6.3} we obtain
\begin{equation}\label{12.6.5}
\begin{split}
&\chi_z\vert_{S_n(G)}(w)\\
&=\left<T_z\left(w,e_{S_n(G)}\right)\zeta_0,\zeta_0\right>_{L^2\left(S_n(G),\mu_{S_n(G)}^{\Ewens}\right)}
\\
&=\left<F_z^{(n)}T_z\left(w,e_{S_n(G)}\right)\zeta_0,F_z^{(n)}\zeta_0\right>_{L^2\left(S_n(G),\mu_{S_n(G)}\right)}
\\
&=\left<\Reg_n\left((w,e_{S_n(G)}\right)F_z^{(n)}\zeta_0,F_z^{(n)}\zeta_0\right>_{L^2\left(S_n(G),\mu_{S_n(G)}\right)}\\
&=\int\limits_{S_n(G)}F_z^{(n)}(xw)\overline{F_z^{(n)}(x)}d\mu_{S_n(G)}(x),
\end{split}
\end{equation}
were we have used the relation between $\mu^{\Ewens}_{S_n(G)}$ and $\mu_{S_n(G)}$, equation (\ref{TheRelationBetweenMeasures}).
We then conclude that $\chi_z\vert_{S_n(G)}$ has the following representation
\begin{equation}\label{12.6.6}
\chi_z\vert_{S_n(G)}(w)=\frac{n!}{(I)_n}\int\limits_{S_n(G)}\prod\limits_{c\in [G]}
\left(z(c)\right)^{[xw](c)}\left(\overline{z(c)}\right)^{[x](c)}d\mu_{S_n(G)}(x).
\end{equation}
Given $\Lambda\in\Y_n\left(\widehat{G}\right)$ denote by $\chi^{\Lambda}$ the character of an irreducible unitary representation of $S_n(G)$ whose equivalence class is parameterized by $\Lambda$. The function
\begin{equation}\label{ch1.1.1}
\varphi_z(x)=\prod\limits_{c\in [G]}\left(z(c)\right)^{[x](c)},\;\; x\in S_n(G).
\end{equation}
can be understood as a central function on $S_n(G)$, and it can be expanded as
\begin{equation}\label{12.6.7}
\varphi_z(x)
=\sum\limits_{\Lambda\in\Y_n\left(\widehat{G}\right)}a\left(\Lambda\right)\chi^{\Lambda}(x),
\;\; x\in S_n(G).
\end{equation}
Using (\ref{12.6.6}) and (\ref{12.6.7}) we find
\begin{equation}
\chi_z\vert_{S_n(G)}(w)=\frac{n!}{(I)_n}
\sum\limits_{\Lambda,\Lambda'\in\Y_n\left(\widehat{G}\right)}a\left(\Lambda'\right)\overline{a\left(\Lambda\right)}
\int\limits_{S_n(G)}\chi^{\Lambda'}(xw)\overline{\chi^{\Lambda}(x)}d\mu_{S_n(G)}(x).
\end{equation}
The orthogonality of the irreducible characters of $S_n(G)$ is expressed by the formula
\begin{equation}\label{12.6.8}
\int\limits_{S_n(G)}\chi^{\Lambda'}(xw)\overline{\chi^{\Lambda}(x)}d\mu_{S_n(G)}(x)
=\frac{\chi^{\Lambda}(w)}{\DIM\Lambda}\delta_{\Lambda,\Lambda'}.
\end{equation}
We thus arrive to the following formula for the characters $\chi_z$:
\begin{equation}\label{12.6.9}
\chi_z\vert_{S_n(G)}(w)=\frac{n!}{(I)_n}
\sum\limits_{\Lambda\in\Y_n\left(\widehat{G}\right)}
|a\left(\Lambda\right)|^2\frac{\chi^{\Lambda}(w)}{\DIM\Lambda},
\end{equation}
where $a\left(\Lambda\right)$ is the coefficient in the decomposition (\ref{12.6.7}).
%%%%%%%%%%%%%%%%%%%%%%%%%%%%%%%%%%%%%%%%%%%%%%%%%%%%%%%%%%%%%%%%%%%%%%%%%%%%%
%%%%%%%%%%%%%%%%%%%%%%%%%%%%%%%%%%%%%%%%%%%%%%%%%%%%%%%%%%%%%%%%%%%%%%%%%%%%%%%%%%%%%%%%%%%%%%%%%%%%%%%%%%%%%%%%%%%%%%%%%%%%%%%%%%%%%%%%%%
\section{Proof of Theorem \ref{Theorem13.2.2}}\label{SECTIONPROOFT1}
%%%%%%%%%%%%%%%%%%%%%%%%%%%%%%%%%%%%%%%%%%%%%%%%%%%%%%%%%%%%%%%%%%%%%%%
%%%%%%%%%%%%%%%%%%%%%%%%%%%%%%%%%%%%%%%%%%%%%%%%%%%%%%%%%%%%%%%%%%
%%%%%%%%%%%%%%%%%%%%%%%%%%%%%%%%%%%%%%%%%%%%%%%%%%%%%%%%%%%%%%%%%%%%%%%%%
From equations   (\ref{12.6.9}) and (\ref{13.1.2}) we obtain the following formula
\begin{equation}\label{13.2.1}
M_z^{(n)}\left(\Lambda\right)=\frac{n!}{\left(I\right)_n}|a(\Lambda)|^2,
\end{equation}
where $a(\Lambda)$ is defined by equation (\ref{12.6.7}), and $I=\int\limits_G|z(g)|^2d\mu_G(g)$.
Given $\Lambda\in\Y_n\left(\widehat{G}\right)$, we need to compute
\begin{equation}\label{ch1.1.2}
a(\Lambda)=\left<\varphi_z,\chi^{\Lambda}\right>_{S_n(G)},
\end{equation}
where $\varphi_z$ is defined by equation (\ref{ch1.1.1}). For this purpose, we will consider the inner product
$\left<\Psi,\chi^{\Lambda}\right>_{S_n(G)}$, where $\Psi(x)$
is a central function on $S_n(G)$ defined in terms of Newton
power symmetric functions; see equation (\ref{PSI}) below.
 We will explain in this Section that Theorem \ref{Theorem13.2.2} follows from Theorem \ref{Theoremch1.3.2}, which explicitly determines this inner product.  Theorem \ref{Theoremch1.3.2} is of independent interest and will be proved in Section \ref{SECTIONPROOFTHEOREMCH}.
%%%%%%%%%%%%%%%%%%%%%%%%%%%%%%%%%%%%%%%%%%%%%%%%%%%%%%%%%%%%%%%%%%%%%%%
%%%%%%%%%%%%%%%%%%%%%%%%%%%%%%%%%%%%%%%%%%%%%%%%%%%%%%%%%%%%%%%%%%
%%%%%%%%%%%%%%%%%%%%%%%%%%%%%%%%%%%%%%%%%%%%%%%%%%%%%%%%%%%%%%%%%%%%%%%%%
\subsection{The characters $\chi^{\Lambda}$ and symmetric functions}
Let $p_1$, $p_2$, $\ldots$ be central complex-valued functions defined on $G$.
For each $\zeta\in\widehat{G}$ define
\begin{equation}\label{ch1.3.3}
\widehat{p}_r(\zeta)=\left<p_r,\chi^{\pi^{\zeta}}\right>_G,
\end{equation}
where $r=1,2,\ldots$, and where $\chi^{\pi^{\zeta}}$ is the character of an irreducible representation $\pi^{\zeta}$ of $G$ parameterized by $\zeta\in \widehat{G}$.
Consider $\widehat{p}_1(\zeta)$, $\widehat{p}_2(\zeta)$, $\ldots$ as independent indeterminates over $\C$, and let
\begin{equation}
\widehat{\SYM}(\zeta)=\C\left[\widehat{p}_r(\zeta),\;r\geq 1\right].
\end{equation}
The algebra $\widehat{\SYM}(\zeta)$ is generated by $\widehat{p}_1(\zeta)$, $\widehat{p}_2(\zeta)$, $\ldots$.
Given $\lambda=(\lambda_1,\lambda_2,\ldots)\in \Y$, the power symmetric function $\widehat{p}_{\lambda}(\zeta)$ is defined by
$$
\widehat{p}_{\lambda}(\zeta)=\widehat{p}_{\lambda_1}(\zeta)\widehat{p}_{\lambda_2}(\zeta)\ldots.
$$
The functions $\widehat{p}_{\lambda}(\zeta)$ form a $\C$-basis in $\widehat{\SYM}(\zeta)$, and $\widehat{\SYM}(\zeta)$ can be understood as an algebra of symmetric functions generated by $\widehat{p}_1(\zeta)$, $\widehat{p}_2(\zeta)$, $\ldots$.
The Schur functions $\widehat{s}_{\lambda}(\zeta)$ are defined in terms of $\widehat{p}_{\lambda}(\zeta)$ using the Frobenius formula,
\begin{equation}\label{SF75}
\widehat{s}_{\lambda}(\zeta)=\sum\limits_{\nu\in\Y_{|\lambda|}}\frac{1}{z_{\nu}}
\chi^{\lambda}_{\nu}\widehat{p}_{\nu}(\zeta),
\end{equation}
where
$$
z_{\nu}=\prod\limits_{i=1}^{\infty}i^{m_i(\nu)}m_i(\nu)!,\;\; \nu=\left(1^{m_1(\nu)}2^{m_2(\nu)}\ldots\right)\in\Y_{|\lambda|},
$$
and where $\chi^{\lambda}_{\nu}$ is the value of the character of the irreducible representation of the symmetric group $S_{|\lambda|}$ parameterized by $\lambda$ on the conjugacy class defined by $\nu$.

Let $\varrho\in\Y_n\left([G]\right)$ parametrize the conjugacy class
of $x\in S_n(G)$. The family $\varrho$ can be understood as a map
$$
\varrho:\;[G]\longrightarrow\Y,\;
\sum\limits_{c\in [G]}|\varrho(c)|=n.
$$
For each $c\in[G]$ write
$$
\varrho(c)=1^{m_1(\varrho(c))}2^{m_2(\varrho(c))}\ldots n^{m_n(\varrho(c)}.
$$
Set
\begin{equation}\label{PSI}
\Psi(x)=\prod\limits_{c\in[G]}p_{\varrho(c)}\left(p_1(c),p_2(c),\ldots\right),\;\; x\in S_n(G),
\end{equation}
where $p_1$, $p_2$, $\ldots$ are the given central functions on $G$,
$\varrho$ determines the conjugacy class of $x\in S_n(G)$, and the Newton power symmetric function $p_{\varrho(c)}$ is defined in terms of $p_1(c)$, $p_2(c)$, $\ldots$
by
\begin{equation}\label{ch1.2.1}
p_{\varrho(c)}\left(p_1(c),p_2(c),\ldots\right)=(p_1(c))^{m_1(\varrho(c))}(p_2(c))^{m_2(\varrho(c))}\ldots (p_n(c))^{m_n(\varrho(c))}.
\end{equation}
\begin{thm}\label{Theoremch1.3.2}
Denote by $\widehat{s}_{\lambda}(\zeta)$ the Schur function in
$\widehat{\SYM}(\zeta)$ parameterized by the Young diagram $\lambda$ and defined by equations (\ref{ch1.3.3}), (\ref{SF75}).
We have
\begin{equation}\label{ch1.3.4}
\left<\Psi,\chi^{\Lambda}\right>_{S_n(G)}=\prod\limits_{\zeta\in\widehat{G}}\widehat{s}_{\Lambda(\zeta)}(\zeta),
\end{equation}
where $\chi^{\Lambda}$ is the character of the irreducible representation of $S_n(G)$ parameterized by the map
$$
\Lambda:\;\widehat{G}\longrightarrow\Y,\;\;\sum\limits_{\zeta\in\widehat{G}}|\Lambda(\zeta)|=n,
$$
and where $\Psi$ is defined by equation (\ref{PSI}).
\end{thm}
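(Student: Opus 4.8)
The plan is to reduce the computation to the homogeneous case, in which $\Lambda$ is supported on a single $\zeta\in\widehat{G}$, and then to match that case against the Frobenius formula (\ref{SF75}) by a direct Haar integration. The engine of the reduction is the Hirai--Hirai--Hora construction \cite{HiraiHiraiHoraI}: writing $n_\zeta=|\Lambda(\zeta)|$, the irreducible character $\chi^\Lambda$ is induced from the Young-type subgroup $H=\prod_{\zeta}S_{n_\zeta}(G)\subset S_n(G)$ of the representation $\bigotimes_\zeta\chi_\zeta$, where $\chi_\zeta$ is the character of the irreducible representation of $S_{n_\zeta}(G)$ attached to the pair $(\pi^\zeta,\Lambda(\zeta))$. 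Since $S_n(G)$ is compact and $\Psi$ is a class function, Frobenius reciprocity gives
\[
\langle\Psi,\chi^\Lambda\rangle_{S_n(G)}=\langle\Res_H\Psi,\,\textstyle\bigotimes_\zeta\chi_\zeta\rangle_H .
\]

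First I would check that $\Psi$ restricts multiplicatively to $H$. If $x=\prod_\zeta x_\zeta$ is block-diagonal with $x_\zeta\in S_{n_\zeta}(G)$, then the cycles of $x$ are exactly the cycles of the blocks $x_\zeta$, and the color of each cycle is computed inside its block; hence the type $\varrho$ of $x$ is the union of the types of the $x_\zeta$. By the very definition (\ref{PSI}) of $\Psi$ as a product over cycles of the power-sum values $p_r(c)$, this yields $\Res_H\Psi=\prod_\zeta\Psi_{n_\zeta}$, where $\Psi_{n_\zeta}$ is built from the same functions $p_1,p_2,\ldots$ on $S_{n_\zeta}(G)$. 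Because the normalized Haar measure on $H$ factors as the product of the normalized Haar measures on the $S_{n_\zeta}(G)$, the inner product splits and
\[
\langle\Psi,\chi^\Lambda\rangle_{S_n(G)}=\prod_\zeta\langle\Psi_{n_\zeta},\chi_\zeta\rangle_{S_{n_\zeta}(G)} .
\]
It therefore suffices to prove $\langle\Psi_m,\chi_\zeta\rangle_{S_m(G)}=\widehat{s}_\lambda(\zeta)$ when $\lambda=\Lambda(\zeta)$ has $m=|\lambda|$ boxes and $\chi_\zeta$ is the homogeneous character.

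For the homogeneous case I would use the explicit form of $\chi_\zeta$. Here $\chi_\zeta$ is the character of $\widetilde{\eta}\otimes V^\lambda$, where $\widetilde\eta$ is the canonical extension to $S_m(G)$ of $(\pi^\zeta)^{\boxtimes m}$ (the extension exists because $S_m$ fixes the homogeneous tuple) and $V^\lambda$ is the Specht module pulled back along $S_m(G)\to S_m$. The standard trace of a permuted tensor product gives, for $x=((g_1,\ldots,g_m),s)$ with $s$ having cycles $C_1,\ldots,C_k$ of lengths $r_1,\ldots,r_k$ and colors $c_1,\ldots,c_k$,
\[
\chi_\zeta(x)=\chi^\lambda(s)\prod_{j=1}^{k}\chi^{\pi^\zeta}(c_j),
\]
since the trace along a cycle is $\Tr\pi^\zeta$ of the ordered product defining its color and $\chi^{\pi^\zeta}$ is central. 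Inserting this together with the cycle factorization $\Psi_m(x)=\prod_j p_{r_j}(c_j)$ into $\langle\Psi_m,\chi_\zeta\rangle_{S_m(G)}=\int\Psi_m\overline{\chi_\zeta}\,d\mu_{S_m(G)}$, and using that $\chi^\lambda$ is real-valued, I would integrate over $G^m$ cycle by cycle: for a single cycle the ordered product of its $g$-variables is Haar-distributed by left-invariance, so integrating a central function of the color against $\overline{\chi^{\pi^\zeta}}$ produces exactly $\langle p_{r_j},\chi^{\pi^\zeta}\rangle_G=\widehat p_{r_j}(\zeta)$. Disjointness of cycles makes the integrals factorize, giving
\[
\langle\Psi_m,\chi_\zeta\rangle_{S_m(G)}=\frac1{m!}\sum_{s\in S_m}\chi^\lambda(s)\,\widehat p_{\mu(s)}(\zeta),
\]
where $\mu(s)$ is the cycle type of $s$. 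Grouping by cycle type $\nu\in\Y_m$, which contributes $m!/z_\nu$ permutations, collapses this to $\sum_\nu z_\nu^{-1}\chi^\lambda_\nu\,\widehat p_\nu(\zeta)=\widehat s_\lambda(\zeta)$ by (\ref{SF75}), completing the homogeneous case and hence the theorem.

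The main obstacle is the homogeneous character formula: one must invoke the Hirai--Hirai--Hora realization carefully to justify both the extension $\widetilde\eta$ of $(\pi^\zeta)^{\boxtimes m}$ and its value on a general cycle, matching the paper's color convention (the color of $(i_1\cdots i_r)$ is the class of $g_{i_r}\cdots g_{i_1}$) with the cyclic ordering in the trace of the permuted tensor product. Once that identity and the measure-theoretic Frobenius reciprocity for the compact group $S_n(G)$ are in place, the remaining steps are routine Haar integration and the classical Frobenius formula.
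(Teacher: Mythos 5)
Your proposal is correct and follows essentially the same route as the paper: both rely on the Hirai--Hirai--Hora realization of $\pi^{\Lambda}$ as induced from $H_n\cong\prod_{\zeta}S_{|\Lambda(\zeta)|}(G)$, Frobenius reciprocity together with the multiplicativity of $\Psi$ on block elements, the trace-of-permuted-tensor-product formula $\chi^{\varrho^{\zeta}}(x)=\prod_{\mathrm{cycles}}\chi^{\pi^{\zeta}}(\mathrm{color})$ (the paper's Proposition \ref{chchiProposition5.1.5}), cycle-by-cycle Haar integration giving $\widehat{p}_{\mu(s)}(\zeta)$ (the paper's Proposition \ref{PropPT6}), and finally the Frobenius formula (\ref{SF75}). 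The only difference is cosmetic ordering: you apply reciprocity before computing the homogeneous character, while the paper computes the character first.
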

We will prove Theorem \ref{Theoremch1.3.2} in Section \ref{SECTIONPROOFTHEOREMCH}.
%%%%%%%%%%%%%%%%%%%%%%%%%%%%%%%%%%%%%%%%%%%%%%%%%%%%%%%%%%%%%%%%%%
%%%%%%%%%%%%%%%%%%%%%%%%%%%%%%%%%%%%%%%%%%%%%%%%%%%%%%%%%%%%%%%%%%%%%%%
\subsection{The formula for the $z$-measures}
Here we would like to show how  formula (\ref{13.2.2.2}) for the $z$-measures can be obtained from Theorem \ref{Theoremch1.3.2}. This will prove Theorem \ref{Theorem13.2.2}  under the assumption that
Theorem \ref{Theoremch1.3.2} holds true.

Let all $p_1(c)$, $p_2(c)$, $\ldots$ be equal to the function $z(c)$. If
$\varrho:\;[G]\longrightarrow\C$, $\sum\limits_{c\in[G]}|\varrho(c)|=n$
describes the conjugacy class of $x\in S_n(G)$, then
\begin{equation}\label{ch2.1.1}
\begin{split}
&p_{\varrho(c)}=(p_1(c))^{m_1(\varrho(c))}
(p_2(c))^{m_2(\varrho(c))}\ldots(p_n(c))^{m_n(\varrho(c))}\\
&=\left(z(c)\right)^{m_1(\varrho(c))+m_2(\varrho(c))+\ldots+m_n(\varrho(c))}
=(z(c))^{[x](c)}.
\end{split}
\end{equation}
Therefore, the function $\Psi$ defined by (\ref{PSI}) turns into
\begin{equation}\label{ch2.1.2}
\begin{split}
\Psi(x)
=\prod\limits_{c\in[G]}(z(c))^{[x](c)},
\end{split}
\end{equation}
and
$$
a(\Lambda)=\left<\Psi,\chi^{\Lambda}\right>_{S_n{G}},
$$
see equations (\ref{ch1.1.1}), (\ref{ch1.1.2}),
and (\ref{ch2.1.2}). Formula (\ref{ch1.3.4}) gives
\begin{equation}\label{ch2.1.3}
\begin{split}
a(\Lambda)=\prod\limits_{\zeta\in\widehat{G}}\widehat{s}_{\Lambda(\zeta)}(\zeta),
\end{split}
\end{equation}
where $\widehat{s}_{\Lambda(\zeta)}(\zeta)$ is the Schur function in $\widehat{\SYM}(\zeta)$
parameterized by the Young diagram $\Lambda(\zeta)$. For the algebra $\widehat{\SYM}(\zeta)$,
it is generated by $\widehat{p}_1(\zeta)$, $\widehat{p}_2(\zeta)$, $\ldots$ defined by equation (\ref{ch1.3.3}). We find
\begin{equation}\label{ch2.1.4}
\begin{split}
\widehat{p}_r(\zeta)=\left<z,\chi^{\pi^\zeta}\right>_G,\; r=1,2,\ldots.
\end{split}
\end{equation}
It is known that if in the algebra $\SYM=\C[p_1,p_2,\ldots]$ all the generators
$p_1$, $p_2$, $\ldots$ are equal to a complex number $\alpha$, then the corresponding
Schur function $s_{\lambda}$ is equal to
\begin{equation}\label{ch2.1.5}
\begin{split}
s_{\lambda}
=\prod\limits_{\Box\in\lambda}\frac{\alpha+c(\Box)}{h(\Box)},\;\; \forall \lambda\in\Y,
\end{split}
\end{equation}
see, for example, Macdonald \cite{Macdonald}, Section I.3, Example 4. Therefore, if $\alpha(\zeta)$
is given by equation (\ref{13.2.2.3}), then (\ref{ch2.1.3}), (\ref{ch2.1.4}), and
(\ref{ch2.1.5}) lead to the formula
\begin{equation}\label{ch2.1.6}
\begin{split}
a(\Lambda)
=\prod\limits_{\zeta\in\hat{G}}\prod\limits_{\Box\in\Lambda(\zeta)}\frac{\alpha(\zeta)+c(\Box)}{h(\Box)}.
\end{split}
\end{equation}
Equation (\ref{ch2.1.6}) together with equation (\ref{13.2.1}) give us the formula for the $z$-measures, equation (\ref{13.2.2.2}).

\begin{rem}
We wish to demonstrate by direct computation that $M_z^{(n)}$ given by equation
(\ref{13.2.2.2}) is a probability measure on $\Y_n(\widehat{G})$.
Note that $M_z^{(n)}(\Lambda)$ can be written as
\begin{equation}\label{6.2.1}
M_z^{(n)}\left(\Lambda\right)=\frac{n!}{(I)_n}\prod\limits_{\zeta\in\widehat{G}}
\frac{\left(\alpha(\zeta)
\overline{\alpha(\zeta)}\right)_{|\Lambda(\zeta)|}}{|\Lambda(\zeta)|!}\;
\mathfrak{m}_{\alpha(\zeta)}^{\left(|\Lambda(\zeta)|\right)}
\left(\Lambda(\zeta)\right),
\end{equation}
where
\begin{equation}\label{6.2.2}
\mathfrak{m}_{a}^{(k)}(\lambda)=\frac{k!}{(a\bar{a})_k}\prod\limits_{\Box\in\lambda}
\frac{(a+c(\Box))(\bar{a}+c(\Box))}{h^2(\Box)}
\end{equation}
is the $z$-measure with the parameter $a$ on $\Y_k$  for the symmetric group $S_k$, see, for example, Borodin and Olshanski \cite{BorodinOlshanskiBook}, Proposition 11.8.

First, let us show that
\begin{equation}\label{6.4.1}
\sum\limits_{\Lambda\in\Y_n(\widehat{G})}\left[\prod\limits_{\zeta\in\widehat{G}}
\frac{\left(\alpha(\zeta)\overline{\alpha(\zeta)}\right)_{|\Lambda(\zeta)|}}{|\Lambda(\zeta)|!}\;\mathfrak{m}_{\alpha(\zeta)}^{\left(|\Lambda(\zeta)|\right)}
\left(\Lambda(\zeta)\right)\right]=\frac{\left(\sum\limits_{\zeta\in\widehat{G}}
\alpha(\zeta)\overline{\alpha(\zeta)}\right)_n}{n!}.
\end{equation}
Set
\begin{equation}
\mathcal{P}_n=\left\{p: \widehat{G}\longrightarrow\mathbb{Z}_{\geq 0},\;\; \sum\limits_{\zeta\in\widehat{G}}p(\zeta)=n\right\}.
\end{equation}
The left-hand side of equation (\ref{6.4.1}) is equal to
\begin{equation}
\begin{split}
&\sum\limits_{p\in\mathcal{P}_n}
\underset{|\Lambda(\eta)|=p(\eta),\; \forall\eta\in\widehat{G}.}{\sum\limits_{\Lambda\in\Y_n\left(\widehat{G}\right)}}
\prod\limits_{\zeta\in\widehat{G}}
\frac{\left(\alpha(\zeta)
\overline{\alpha(\zeta)}\right)_{p(\zeta)}}{p(\zeta)!}\;
\mathfrak{m}_{\alpha(\zeta)}^{(p(\zeta))}
\left(\Lambda(\zeta)\right)\\
&=\sum\limits_{p\in\mathcal{P}_n}
\left[\prod\limits_{\zeta\in\widehat{G}}
\frac{\left(\alpha(\zeta)
\overline{\alpha(\zeta)}\right)_{p(\zeta)}}{p(\zeta)!}\right]
\left[\underset{|\Lambda(\eta)|=p(\eta),\; \forall\eta\in\widehat{G}.}{\sum\limits_{\Lambda\in\Y_n\left(\widehat{G}\right)}}
\prod\limits_{\zeta\in\widehat{G}}\mathfrak{m}_{\alpha(\zeta)}^{(p(\zeta))}
\left(\Lambda(\zeta)\right)\right].
\end{split}
\end{equation}
Let us show that the expression in the second bracket (on the right-hand side of the equation written above) is equal to $1$, for each $p\in\mathcal{P}_n$.
In fact, assume that
\begin{equation}
p(\eta)=\left\{
  \begin{array}{ll}
    n_1, & \eta=\zeta_1,\\
    n_2, & \eta=\zeta_2, \\
    \vdots, &  \\
    n_r, & \eta=\zeta_r,\\
    0, & \mbox{otherwise},
  \end{array}
\right.
\end{equation}
where $n_1+n_2+\ldots+n_r=n$. Then each $\Lambda$ in the sum in the second bracket
can be represented as $\Lambda=\left(\lambda^{(1)},\ldots,\lambda^{(r)}\right)$, where
$\lambda^{(1)}$, $\ldots$, $\lambda^{(r)}$ are Young diagrams such that
$|\lambda^{(1)}|=n_1$, $\ldots$, $|\lambda^{(r)}|=n_r$. The  sum can be written as
\begin{equation}
\begin{split}
&\underset{|\Lambda(\eta)|=p(\eta),\; \forall\eta\in\widehat{G}.}{\sum\limits_{\Lambda\in\Y_n\left(\widehat{G}\right)}}
\prod\limits_{\zeta\in\widehat{G}}\mathfrak{m}_{\alpha(\zeta)}^{(p(\zeta))}
\left(\Lambda(\zeta)\right)\\
&=\sum\limits_{\lambda^{(1)}\in\Y_{n_1}}\ldots\sum\limits_{\lambda^{(r)}\in\Y_{n_r}}\mathfrak{m}_{\alpha(\zeta_1)}^{(n_1)}(\lambda^{(1)})\ldots
\mathfrak{m}_{\alpha(\zeta_r)}^{(n_r)}(\lambda^{(r)})=1.
\end{split}
\end{equation}
Second, we observe that
\begin{equation}\label{6.4.4}
\sum\limits_{p\in\mathcal{P}_n}
\prod\limits_{\zeta\in\widehat{G}}
\frac{\left(\alpha(\zeta)
\overline{\alpha(\zeta)}\right)_{p(\zeta)}}{p(\zeta)!}
=\frac{\left(\sum\limits_{\zeta\in\widehat{G}}\alpha(\zeta)\overline{\alpha(\zeta)}\right)_n}{n!}.
\end{equation}
The formula above follows from
\begin{equation}\label{6.4.5}
\underset{m_1+\ldots+m_k=n}{\sum\limits_{m_1,\ldots,m_k\in\left\{0,1,\ldots\right\}}}
\frac{(a_1)_{m_1}(a_2)_{m_2}\ldots (a_k)_{m_k}}{m_1!m_2!\ldots m_k!}=\frac{(a_1+\ldots+a_k)_n}{n!},
\end{equation}
and from the fact that (\ref{6.4.5}) holds true for each $k$, $k=1,2,\ldots$.
\end{rem}

%%%%%%%%%%%%%%%%%%%%%%%%%%%%%%%%%%%%%%%%%%%%%%%%%%%%%%%%%%%%%%%%%%%%
\section{The Hirai, Hirai, and Hora construction and the proof of Theorem \ref{Theoremch1.3.2}}\label{SECTIONPROOFTHEOREMCH}
Our proof of Theorem \ref{Theoremch1.3.2} is based on the Hirai, Hirai, and Hora construction of irreducible representations of $S_n(G)$, which we will describe in the following.

%%%%%%%%%%%%%%%%%%%%%%%%%%%%%%%%%%%%%%%%%%%%%%%%%%%%%%%%%%%%%%%%%%%%%%%%%%
%%%%%%%%%%%%%%%%%%%%%%%%%%%%%%%%%%%%%%%%%%%%%%%%%%%%%%%%%%%%%%%%%%%%%%%%%%
%%%%%%%%%%%%%%%%%%%%%%%%%%%%%%%%%%%%%%%%%%%%%%%%%%%%%%%%%%%%%%%%%%%%%%%%%
\subsection{The Hirai, Hirai, and Hora construction of irreducible
representations of $S_n(G)$}
%%%%%%%%%%%%%%%%%%%%%%%%%%%%%%%%%%%%%%%%%%%%%%%%%%%%%%%%%%%%%%%%%%%%%%%%%%
%%%%%%%%%%%%%%%%%%%%%%%%%%%%%%%%%%%%%%%%%%%%%%%%%%%%%%%%%%%%%%%%%%%%%%%%%%
%%%%%%%%%%%%%%%%%%%%%%%%%%%%%%%%%%%%%%%%%%%%%%%%%%%%%%%%%%%%%%%%%%%%%%%%%
In the case of a compact group $G$ irreducible unitary representations of the wreath product $S_n(G)$ are constructed by Hirai, Hirai, and Hora in Ref. \cite{HiraiHiraiHoraI}.
We also refer the reader to Hora, Hirai, Hirai \cite{HoraHiraiHiraiII},
to Hora and Hirai \cite{HoraHirai},  and to the lectures by Hora \cite{HoraLectures}, where
some aspects of work \cite{HiraiHiraiHoraI}
are explained in detail.

Here we give a short summary of the constriction in Ref. \cite{HiraiHiraiHoraI}. Assume that $\Lambda=\left(\lambda^{\zeta}\right)_{\zeta\in\widehat{G}}$ is such that $\sum_{\zeta\in\widehat{G}}|\lambda^{\zeta}|=n$.
We wish to obtain an irreducible unitary representation $\pi^{\Lambda}$ of $S_n(G)$ parameterized by $\Lambda$.

Consider a partition of $\{1,2,\ldots,n\}$ into blocks of size $\left|\lambda^{\zeta}\right|$:
\begin{equation}\label{HHH3.2.2}
\{1,2,\ldots,n\}=\underset{\zeta\in\widehat{G}}\bigcup I_{n,\zeta},\;\; \left|I_{n,\zeta}\right|=\left|\lambda^{\zeta}\right|.
\end{equation}
The subgroup $\prod\limits_{\zeta\in\widehat{G}}S_{I_{n,\zeta}}$
(where $S_{I_{n,\zeta}}$ is the permutation group of $I_{n,\zeta}$) is isomorphic to $\prod\limits_{\zeta\in\widehat{G}}S_{|\lambda^{\zeta}|}$
(where $S_{|\lambda^{\zeta}|}$ is the permutation group of
$
\left\{1,2,\ldots,\left|\lambda^\zeta\right|\right\}.)
$

Given a partition of $\{1,2,\ldots,n\}$ as in (\ref{HHH3.2.2}) we introduce $\eta\in \widehat{D_n(G)}$ (see the notation in Section \ref{SubsectionDn}) as a list of those $\zeta\in\widehat{G}$ for which $|\lambda^{\zeta}|\neq0$. That is, if $\zeta\in\widehat{G}$ is such that $|\lambda^{\zeta}|\neq 0$, and  $I_{n,\zeta}=\{i_1,\ldots,i_k\}$,  then this specific $\zeta$ occupies the places numbered $i_1$, $\ldots$, $i_k$ in the list $\eta$. It is important that the group $S^{\eta}$ defined by (\ref{reps5})
is isomorphic to $\prod\limits_{\zeta\in\widehat{G}}S_{I_n,\zeta}$, and this is isomorphic to
$\prod\limits_{\zeta\in\widehat{G}}S_{|\lambda^{\zeta}|}$.
Set
\begin{equation}\label{HHH3.4.1}
H_n=G^n\rtimes S^{\eta}\cong
\prod\limits_{\zeta\in\widehat{G}}S_{|\lambda^{\zeta}|}(G).
\end{equation}
Clearly, $H_n$ is a subgroup of $S_n(G)$.

Let $\left(\pi^{\zeta},V^{\zeta}\right)$ be an irreducible unitary representation of $G$.
Assume that $\zeta\in\widehat{G}$ is such that $|\lambda^{\zeta}|\neq 0$.
A representation $\varrho^{\zeta}$ of $S_{|\lambda^{\zeta}|}(G)$ is defined by
\begin{equation}\label{HHH3.5.1}
\varrho^{\zeta}(g,s)=\left(\pi^{\zeta}\right)^{\boxtimes\; |\lambda^{\zeta}|}
(g)I(s),\;\;(g,s)\in S_{|\lambda^{\zeta}|}(G).
\end{equation}
The representation space of $\varrho^{\zeta}$ is $\left(V^{\zeta}\right)^{\otimes\;|\lambda^{\zeta}|}$, and the action of $I(s)$ on this space is defined by
\begin{equation}\label{HHH3.5.2}
I(s)\left(v_1\otimes\ldots\otimes v_{|\lambda^{\zeta}|}\right)=
v_{s^{-1}(1)}\otimes\ldots\otimes v_{s^{-1}(|\lambda^{\zeta}|)},
\end{equation}
where $s\in S_{|\lambda^{\zeta}|}$. It is not difficult to see that $\varrho^{\zeta}$ is an irreducible unitary representation of $S_{|\lambda^{\zeta}|}(G)$.
Since $H_n$ defined by (\ref{HHH3.4.1}) is isomorphic to $\prod\limits_{\zeta\in\widehat{G}}S_{|\lambda^{\zeta}|}(G)$, an irreducible representation
$\varrho^{\eta}$ of $H_n$ can be defined in terms of the representations
$\varrho^{\zeta}$ of $S_{|\lambda^{\zeta}|}(G)$,
\begin{equation}
\varrho^{\eta}=\underset{\zeta\in\widehat{G},\; |\lambda^{\zeta}|\neq 0}{\boxtimes}\varrho^{\zeta}.
\end{equation}
The representation space of
$\varrho^{\eta}$ is
$$
V^{\eta}=\underset{\zeta\in\widehat{G},\; |\lambda^{\zeta}|\neq 0}{\otimes}
\left(V^{\zeta}\right)^{\otimes\;|\lambda^{\zeta}|}.
$$
In addition, set
$$
\xi^{\eta}=\underset{\zeta\in\widehat{G},\; |\lambda^{\zeta}|\neq 0}{\boxtimes}\pi^{\lambda^\zeta},
$$
where $\pi^{\lambda^{\zeta}}$ denotes the irreducible representation of $S_{|\lambda^{\zeta}|}$ parameterized by the Young diagram $\lambda^{\zeta}$. The representation space of $\xi^{\eta}$ is $\underset{\zeta\in\widehat{G},\; |\lambda^{\zeta}|\neq 0}{\otimes}
V^{\lambda^{\zeta}}$, where $V^{\lambda^{\zeta}}$ denotes the representation space of $\pi^{\lambda^{\zeta}}$. Clearly, $\xi^{\eta}$ can be understood as an irreducible representation of $S^{\eta}$. In what follows, we agree that $\xi^{\eta}$ acts as a trivial representation of the subgroup $G^n$ of $H_n$, and we regard $\xi^{\eta}$ as an irreducible representation of $H_n$.

In this way, we obtain an irreducible unitary representation $\varrho^{\eta}\otimes\xi^{\eta}$ of the subgroup $H_n$ of $S_n(G)$.
\begin{thm}\label{TheoremHHH3.8.1}
For $\Lambda=\left(\lambda^{\zeta}\right)_{\zeta\in\widehat{G}}\in\Y_n\left(\widehat{G}\right)$ set
\begin{equation}\label{HHH3.8.2}
\pi^{\Lambda}=\Ind_{H_n}^{S_n(G)}\left(\varrho^{\eta}\otimes\xi^{\eta}\right).
\end{equation}
Then $\pi^{\Lambda}$ is an irreducible unitary representation of $S_n(G)$.
Moreover, any irreducible unitary representation of $S_n(G)$ is equivalent to one of the induced representations $\pi^{\Lambda}$ defined by (\ref{HHH3.8.2}).
\end{thm}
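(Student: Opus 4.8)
The plan is to derive the theorem from Clifford theory (the Mackey machine) applied to the compact group $S_n(G)$ together with its closed normal subgroup $N=D_n(G)=G^n$, whose quotient $S_n(G)/N\cong S_n$ is finite. Since $G$ is compact, $S_n(G)$ is compact and all of its irreducible unitary representations are finite-dimensional, so the classical little-group method is available in its cleanest form. First I would record the dual data: by the Peter--Weyl theorem for $G^n$, the set $\widehat{N}$ is identified with $\widehat{G}^{\,n}$ as in Section \ref{SubsectionDn}, the class $\eta=(\zeta^1,\dots,\zeta^n)$ corresponding to $\pi^{\zeta^1}\boxtimes\cdots\boxtimes\pi^{\zeta^n}$, and $S_n$ acts on $\widehat{N}$ by permuting the tensor factors as in (\ref{reps4}).

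Next I would analyze the orbits. Two points of $\widehat{N}=\widehat{G}^{\,n}$ lie in the same $S_n$-orbit if and only if they have the same multiplicities, so the orbits are parameterized by functions $m:\widehat{G}\to\mathbb{Z}_{\geq 0}$ with $\sum_{\zeta}m(\zeta)=n$. For a given $\Lambda=(\lambda^{\zeta})_{\zeta}$ I would take $m(\zeta)=|\lambda^{\zeta}|$ and the representative $\eta$ built in (\ref{HHH3.2.2}); its stabilizer is precisely the group $S^{\eta}$ of (\ref{reps5}), which is isomorphic to $\prod_{\zeta}S_{|\lambda^{\zeta}|}$, and the associated inertia (isotropy) subgroup is $H_n=G^n\rtimes S^{\eta}$ of (\ref{HHH3.4.1}). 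Clifford theory then asserts a bijection between $\widehat{S_n(G)}$ and pairs consisting of an orbit and an irreducible representation of the corresponding little group, the representation attached to such a pair being obtained by extending $\eta$ to $H_n$, tensoring by the inflated little-group representation, and inducing up to $S_n(G)$.

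The construction-specific heart of the proof is to verify that $\varrho^{\eta}$ is a genuine (honest, not merely projective) unitary representation of $H_n$ restricting to $\eta$ on $N$. For this it suffices to check, factor by factor, that each $\varrho^{\zeta}$ defined by (\ref{HHH3.5.1}) is a homomorphism of $S_{|\lambda^{\zeta}|}(G)$: expanding $\varrho^{\zeta}\bigl((g,s)(h,t)\bigr)$ and using (\ref{HHH3.5.2}), the intertwining identity $I(s)\,(\pi^{\zeta})^{\boxtimes k}(h)\,I(s)^{-1}=(\pi^{\zeta})^{\boxtimes k}(s(h))$ — valid precisely because every tensor slot carries the same $\zeta$ — makes the multiplicativity hold. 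I expect this compatibility computation to be the main obstacle: it is exactly the point where the coincidence of the colours forces the Mackey obstruction class in $H^2(S^{\eta},\mathbb{T})$ to vanish, so that no projective twist is needed. Granting it, $\varrho^{\eta}$ extends $\eta$, while $\xi^{\eta}=\boxtimes_{\zeta}\pi^{\lambda^{\zeta}}$ is an irreducible representation of $S^{\eta}\cong\prod_{\zeta}S_{|\lambda^{\zeta}|}$ inflated trivially across $G^n$.

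Finally I would assemble the pieces. Because $\varrho^{\eta}|_N=\eta$ is $N$-irreducible and $\xi^{\eta}$ is trivial on $N$, the standard Mackey lemma identifies the representations of $H_n$ whose restriction to $N$ is $\eta$-isotypic precisely as $\varrho^{\eta}\otimes\sigma$ with $\sigma\in\widehat{S^{\eta}}$, and shows that $\varrho^{\eta}\otimes\sigma$ is irreducible if and only if $\sigma$ is; taking $\sigma=\xi^{\eta}$ makes $\varrho^{\eta}\otimes\xi^{\eta}$ irreducible. The Mackey irreducibility criterion then yields that $\pi^{\Lambda}=\Ind_{H_n}^{S_n(G)}(\varrho^{\eta}\otimes\xi^{\eta})$ is irreducible, and that two such induced representations are equivalent only when their orbit data and little-group data agree. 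It remains to observe that the parameter set $\Y_n(\widehat{G})$ matches the Clifford data exactly: the orbit is recorded by $(|\lambda^{\zeta}|)_{\zeta}$ and the little-group representation $\xi^{\eta}=\boxtimes_{\zeta}\pi^{\lambda^{\zeta}}$ ranges over all irreducibles of $\prod_{\zeta}S_{|\lambda^{\zeta}|}$ as the $\lambda^{\zeta}$ range over Young diagrams of the prescribed sizes. Hence $\Lambda\mapsto\pi^{\Lambda}$ is a bijection onto $\widehat{S_n(G)}$, which is the assertion of the theorem.
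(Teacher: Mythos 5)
Your proof is correct, and it is essentially the same approach as the paper's: the paper offers no argument of its own here, simply citing Hirai--Hirai--Hora (Section 3.2), whose construction of $\pi^{\Lambda}$ is exactly the little-group/Mackey-machine method you describe, with the vanishing of the Mackey obstruction secured by the blockwise extension $\varrho^{\zeta}(g,s)=\left(\pi^{\zeta}\right)^{\boxtimes |\lambda^{\zeta}|}(g)I(s)$. Your key verification --- the intertwining identity $I(s)\left(\pi^{\zeta}\right)^{\boxtimes k}(h)I(s)^{-1}=\left(\pi^{\zeta}\right)^{\boxtimes k}(s(h))$, valid because every tensor slot carries the \emph{same} representation $\pi^{\zeta}$ --- is indeed the essential point, and the regularity hypotheses of the Mackey machine are trivially satisfied since $\widehat{G}^{\,n}$ is countable and discrete and the quotient $S_n$ is finite.
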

\begin{proof}See Hirai, Hirai, and Hora \cite{HiraiHiraiHoraI}, Section 3.2.
\end{proof}
%%%%%%%%%%%%%%%%%%%%%%%%%%%%%%%%%%%%%%%%%%%%%%%%%%%%%%%%%%%%%%%%%%%%%%%%%%
%%%%%%%%%%%%%%%%%%%%%%%%%%%%%%%%%%%%%%%%%%%%%%%%%%%%%%%%%%%%%%%%%%%%%%%%%%
%%%%%%%%%%%%%%%%%%%%%%%%%%%%%%%%%%%%%%%%%%%%%%%%%%%%%%%%%%%%%%%%%%%%%%%%%
\subsection{A formula for the character  $\chi^{\varrho^{\eta}\otimes\xi^{\eta}}$}
%%%%%%%%%%%%%%%%%%%%%%%%%%%%%%%%%%%%%%%%%%%%%%%%%%%%%%%%%%%%%%%%%%%%%%%%%%
%%%%%%%%%%%%%%%%%%%%%%%%%%%%%%%%%%%%%%%%%%%%%%%%%%%%%%%%%%%%%%%%%%%%%%%%%%
%%%%%%%%%%%%%%%%%%%%%%%%%%%%%%%%%%%%%%%%%%%%%%%%%%%%%%%%%%%%%%%%%%%%%%%%%
\begin{prop}\label{chchiProposition5.1.5}
Let $\left(\pi^{\zeta},V^{\zeta}\right)$ denote the irreducible unitary representation of
a compact group $G$ parameterized by $\zeta\in\widehat{G}$. Assume that
\begin{equation}\label{chchi5.1.2}
s=\left(i_1i_2\ldots i_{l_1}\right)\left(j_1j_2\ldots j_{l_2}\right)\ldots\left(\nu_1\nu_2\ldots\nu_{l_p}\right)\in S_{|\lambda^{\zeta}|},
\end{equation}
so $l_1+l_2+\ldots+l_p=|\lambda^{\zeta}|$.
Then we have
\begin{equation}\label{chchi5.1.3}
\begin{split}
&\chi^{\varrho^{\zeta}}\left[\left((g_1,\ldots,g_{|\lambda^{\zeta}|}),s\right)\right]\\
&=\chi^{\pi^{\zeta}}\left(g_{i_{l_1}}\ldots g_{i_2}g_{i_1}\right)
\chi^{\pi^{\zeta}}\left(g_{j_{l_2}}\ldots g_{j_2}g_{j_1}\right)
\ldots \chi^{\pi^{\zeta}}\left(g_{\nu_{l_p}}\ldots g_{\nu_2}g_{\nu_1}\right),
\end{split}
\end{equation}
where $\chi^{\pi^{\zeta}}$ denotes the character of the irreducible representation $\left(\pi^{\zeta},V^{\zeta}\right)$
of $G$ (see Section \ref{NotSection1.2}),
and $\chi^{\varrho^{\zeta}}$ denotes the character of the representation
$\left(\varrho^{\zeta},\left(V^{\zeta}\right)^{\otimes |\lambda^{\zeta}|}\right)$ of the group
$S_{|\lambda^{\zeta}|}(G)$ defined by (\ref{HHH3.5.1}).
\end{prop}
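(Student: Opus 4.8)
The plan is to compute the character as the trace of the operator $\varrho^{\zeta}(g,s)$ in an orthonormal basis, using the fact that by (\ref{HHH3.5.1}) this operator is the composition of the diagonal operator $\pi^{\zeta}(g_1)\otimes\cdots\otimes\pi^{\zeta}(g_m)$ (writing $m=|\lambda^{\zeta}|$) with the index-shuffling operator $I(s)$ from (\ref{HHH3.5.2}). First I would fix an orthonormal basis $\{e_{\alpha}\}$ of $V^{\zeta}$, so that the vectors $e_{\alpha_1}\otimes\cdots\otimes e_{\alpha_m}$ form an orthonormal basis of $\left(V^{\zeta}\right)^{\otimes m}$. Applying $I(s)$ and then the diagonal operator to such a basis vector gives $\pi^{\zeta}(g_1)e_{\alpha_{s^{-1}(1)}}\otimes\cdots\otimes\pi^{\zeta}(g_m)e_{\alpha_{s^{-1}(m)}}$, and taking the inner product against $e_{\alpha_1}\otimes\cdots\otimes e_{\alpha_m}$ and summing over all $\alpha_k$ yields
\[
\chi^{\varrho^{\zeta}}(g,s)=\sum_{\alpha_1,\ldots,\alpha_m}\prod_{k=1}^{m}\left[\pi^{\zeta}(g_k)\right]_{\alpha_k,\,\alpha_{s^{-1}(k)}},
\]
where $\left[\pi^{\zeta}(g_k)\right]_{\beta,\gamma}=\langle\pi^{\zeta}(g_k)e_{\gamma},e_{\beta}\rangle$ denotes a matrix element. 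The crucial structural feature is that the second index $\alpha_{s^{-1}(k)}$ links the summation variables precisely along the cycles of $s$.

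The key step is then to observe that this sum factorizes over the disjoint cycles of $s$: each index $\alpha_j$ occurs exactly once as a first index (in the factor $k=j$) and once as a second index (in the factor $k=s(j)$), so the variables attached to distinct cycles are decoupled and may be summed independently. For a single cycle $(i_1i_2\ldots i_l)$ with the convention $s(i_a)=i_{a+1}$ (indices mod $l$), the associated factors, after summation over $\alpha_{i_1},\ldots,\alpha_{i_l}$, assemble into the trace of an ordered matrix product via the standard identity
\[
\sum_{\alpha_{i_1},\ldots,\alpha_{i_l}}\left[\pi^{\zeta}(g_{i_l})\right]_{\alpha_{i_l},\alpha_{i_{l-1}}}\cdots\left[\pi^{\zeta}(g_{i_1})\right]_{\alpha_{i_1},\alpha_{i_l}}=\Tr\!\left(\pi^{\zeta}(g_{i_l})\cdots\pi^{\zeta}(g_{i_1})\right),
\]
which by the homomorphism property $\pi^{\zeta}(a)\pi^{\zeta}(b)=\pi^{\zeta}(ab)$ equals $\chi^{\pi^{\zeta}}(g_{i_l}\cdots g_{i_1})$. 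Taking the product over all cycles of $s$ reproduces exactly formula (\ref{chchi5.1.3}).

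The only genuine subtlety, and the step I expect to demand the most care, is the bookkeeping of the cyclic index identifications: I must verify that the convention in (\ref{HHH3.5.2}) (the $k$-th factor of $I(s)(v_1\otimes\cdots\otimes v_m)$ is $v_{s^{-1}(k)}$), combined with the cycle reading $s(i_a)=i_{a+1}$, produces the group elements in the \emph{reversed} order $g_{i_l}\cdots g_{i_1}$ rather than $g_{i_1}\cdots g_{i_l}$. An off-by-one error in the inverse, or a swap of the two matrix indices, would flip this order, so I would confirm the orientation by checking a single transposition or a single $2$-cycle explicitly before stating the general result. Everything else is routine: the factorization over cycles is immediate once the index dependencies are recognized as a disjoint union of oriented cycles, and the reduction of each cyclic sum to a trace is the elementary matrix identity displayed above.
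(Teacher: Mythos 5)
Your proof is correct and follows essentially the same route as the paper's: both compute the trace of $\varrho^{\zeta}(g,s)$ in a product basis as a sum of products of matrix elements $\left[\pi^{\zeta}(g_k)\right]_{\alpha_k,\alpha_{s^{-1}(k)}}$, factor the sum over the disjoint cycles of $s$, and identify each cyclic sum as $\Tr\left(\pi^{\zeta}(g_{i_l})\ldots\pi^{\zeta}(g_{i_1})\right)=\chi^{\pi^{\zeta}}\left(g_{i_l}\ldots g_{i_1}\right)$. The only cosmetic difference is that you work with an orthonormal basis and inner products while the paper expands in an arbitrary basis of $V^{\zeta}$, and the reversed ordering $g_{i_l}\ldots g_{i_1}$ that you flag as the delicate point comes out exactly as in the paper's regrouping.
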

\begin{proof}
Suppose that $\dim V^{\zeta}=m$, and that $\mathcal{B}_{V^{\zeta}}=\left(v_1,\ldots,v_m\right)$
is a basis of $V^{\zeta}$. A suitable basis for $\left(V^{\zeta}\right)^{\otimes|\lambda^{\zeta}|}$
is formed by $v_{a_1}\otimes v_{a_2}\otimes\ldots\otimes v_{a_k}$, where $k=|\lambda^{\zeta}|$, and $a_1,\ldots,a_k$ take values in $\{1,2,\ldots,m\}$. We can write
\begin{equation}
\begin{split}
&\left(\pi^{\zeta}(g_1)\boxtimes\pi^{\zeta}(g_2)\boxtimes\ldots\boxtimes\pi^{\zeta}(g_k)\right)
I(s)\left(v_{a_1}\otimes v_{a_2}\otimes\ldots\otimes v_{a_k}\right)\\
&=\left(\pi^{\zeta}(g_1)\boxtimes\pi^{\zeta}(g_2)\boxtimes\ldots\boxtimes\pi^{\zeta}(g_k)\right)
\left(v_{a_{s^{-1}(1)}}\otimes v_{a_{s^{-1}(2)}}\otimes\ldots\otimes v_{a_{s^{-1}(k)}}\right)\\
&=\sum\limits_{b_1,\ldots,b_k=1}^m
A^{(1)}_{b_1,a_{s^{-1}(1)}}A^{(2)}_{b_2,a_{s^{-1}(2)}}\ldots A^{(k)}_{b_k,a_{s^{-1}(k)}}
v_{b_1}\otimes v_{b_2}\otimes\ldots\otimes v_{b_k},
\end{split}
\end{equation}
where $A^{(1)}$ is the matrix of $\pi^{\zeta}(g_1)$ in the basis $\mathcal{B}_{V^{\zeta}}$,
$\ldots$, $A^{(k)}$ is the matrix of $\pi^{\zeta}(g_k)$ in the basis $\mathcal{B}_{V^{\zeta}}$.
This gives the following formula for $\chi^{\varrho^{\zeta}}$:
\begin{equation}
\chi^{\varrho^{\zeta}}\left[\left((g_1,\ldots,g_{|\lambda^{\zeta}|}),s\right)\right]
=\sum\limits_{a_1,\ldots,a_k=1}^m
A^{(1)}_{a_1,a_{s^{-1}(1)}}A^{(2)}_{a_2,a_{s^{-1}(2)}}\ldots A^{(k)}_{a_k,a_{s^{-1}(k)}},\;\; k=|\lambda^{\zeta}|.
\end{equation}
Next, we use the explicit form of $s$ (equation (\ref{chchi5.1.2})), and the rewrite the right-hand side of the equation above as
\begin{equation}
\begin{split}
&\sum\limits_{a_1,\ldots,a_k=1}^m
\left[A^{(i_{l_1})}_{a_{i_{l_1}},a_{i_{l_1-1}}}\ldots A^{(i_2)}_{a_{i_2},a_{i_1}} A^{(i_1)}_{a_{i_1},a_{i_{l_1}}}\right]
\left[A^{(j_{l_2})}_{a_{j_{l_2}},a_{j_{l_2-1}}}\ldots A^{(j_2)}_{a_{j_2},a_{j_1}} A^{(j_1)}_{a_{j_1},a_{j_{l_2}}}\right]\ldots\\
&\times
\left[A^{(\nu_{l_p})}_{a_{nu_{l_p}},a_{\nu_{l_p-1}}}\ldots A^{(\nu_2)}_{a_{\nu_2},a_{\nu_1}} A^{(\nu_1)}_{a_{\nu_1},a_{\nu_{l_p}}}\right]\\
&=\Tr\left[A^{(i_{l_1})}\ldots A^{(i_2)} A^{(i_1)}\right]
\Tr\left[A^{(j_{l_2})}\ldots A^{(j_2)} A^{(j_1)}\right]\ldots
\Tr\left[A^{(\nu_{l_p})}\ldots A^{(\nu_2)} A^{(\nu_1)}\right].
\end{split}
\end{equation}
Thus we obtain
\begin{equation}
\begin{split}
&\chi^{\varrho^{\zeta}}\left[\left(\left(g_1,\ldots,g_k\right),s\right)\right]=\Tr\left[\pi^{\zeta}\left(g_{i_{l_1}}\right)
\ldots \pi^{\zeta}\left(g_{i_{2}}\right)\pi^{\zeta}\left(g_{i_{1}}\right)\right]
\\
&\times\Tr\left[\pi^{\zeta}\left(g_{j_{l_2}}\right)
\ldots \pi^{\zeta}\left(g_{j_{2}}\right)\pi^{\zeta}\left(g_{j_{1}}\right)\right]
\ldots
\Tr\left[\pi^{\zeta}\left(g_{\nu_{l_p}}\right)
\ldots \pi^{\zeta}\left(g_{\nu_{2}}\right)\pi^{\zeta}\left(g_{\nu_{1}}\right)\right]\\
&=\Tr\left[\pi^{\zeta}\left(g_{i_{l_1}}
\ldots g_{i_{2}}g_{i_{1}}\right)\right]
\Tr\left[\pi^{\zeta}\left(g_{j_{l_2}}
\ldots g_{j_{2}}g_{j_{1}}\right)\right]
\ldots
\Tr\left[\pi^{\zeta}\left(g_{\nu_{l_p}}
\ldots g_{\nu_{2}}g_{\nu_{1}}\right)\right],
\end{split}
\end{equation}
which is the right-hand side of equation (\ref{chchi5.1.3}).
\end{proof}
Assume that
$
\widehat{G}=\left\{\zeta_q\right\}_{q\in\mathcal{K}},
$
where $\mathcal{K}$ is a countable set. Then equivalence classes of the irreducible unitary representations of $S_n(G)$ are parameterized by the maps
\begin{equation}\label{chchi5.2.1}
\Lambda:\;\widehat{G}\longrightarrow\Y,\;\;
\sum\limits_{q\in\mathcal{K}}|\Lambda(\zeta_q)|=n.
\end{equation}
Suppose that a such map  $\Lambda$ is given, and it is defined by
\begin{equation}\label{chchi5.2.2}
\Lambda\left(\zeta_q\right)=
\left\{
  \begin{array}{ll}
    \lambda^{\zeta_{\tau_1}}, & q=\tau_1, \\
    \lambda^{\zeta_{\tau_2}}, & q=\tau_2, \\
    \vdots &  \\
    \lambda^{\zeta_{\tau_r}}, & q=\tau_r, \\
    \emptyset, & \hbox{otherwise.}
  \end{array}
\right.
\end{equation}
Set
$$
n_1=|\lambda^{\zeta_{\tau_1}}|,\ldots, n_r=|\lambda^{\zeta_{\tau_r}}|.
$$
We have $n_1+\ldots+n_r=n$, and the subgroup $H_n$ of $S_n(G)$ is isomorphic to
$$
S_{n_1}(G)\times\ldots\times S_{n_r}(G).
$$
Assume that
$$
\left(s^{(1)},\ldots, s^{(r)}\right)\in S_{n_1}(G)\times\ldots\times S_{n_r}(G),
$$
and write
\begin{equation}
\begin{split}
&s^{(1)}=\left(i^{(1)}_1i^{(1)}_2\ldots i^{(1)}_{l_1^{(1)}}\right)
\left(j^{(1)}_1j^{(1)}_2\ldots j^{(1)}_{l_2^{(1)}}\right)
\ldots
\left(\nu^{(1)}_1\nu^{(1)}_2\ldots \nu^{(1)}_{l_{n_1}^{(1)}}\right),\\
&\vdots\\
&s^{(r)}=\left(i^{(r)}_1i^{(r)}_2\ldots i^{(r)}_{l_1^{(r)}}\right)
\left(j^{(r)}_1j^{(r)}_2\ldots j^{(r)}_{l_2^{(r)}}\right)
\ldots
\left(\nu^{(r)}_1\nu^{(r)}_2\ldots \nu^{(r)}_{l_{n_r}^{(r)}}\right).
\end{split}
\end{equation}
Note that the conditions
\begin{equation}
l_1^{(1)}+l_2^{(1)}+\ldots+l_{n_1}^{(1)}=n_1,\ldots,
l_1^{(r)}+l_2^{(r)}+\ldots+l_{n_r}^{(r)}=n_r
\end{equation}
are satisfied. Since
$
\varrho^{\eta}=\boxtimes_{\zeta\in\widehat{G}}\varrho^{\zeta},
$
we obtain
\begin{equation}\label{chchi5.3.2}
\begin{split}
&\chi^{\varrho^{\eta}}\left[\left(\left(g_1^{(1)},\ldots,g_{n_1}^{(1)}\right)s^{(1)}\right)\ldots
\left(\left(g_1^{(r)},\ldots,g_{n_r}^{(r)}\right)s^{(r)}\right)\right]\\
&=\chi^{\varrho^{\zeta_{\tau_1}}}\left[\left(\left(g_1^{(1)},\ldots,g_{n_1}^{(1)}\right)s^{(1)}\right)\right]
\ldots
\chi^{\varrho^{\zeta_{\tau_r}}}\left[\left(\left(g_1^{(r)},\ldots,g_{n_r}^{(r)}\right)s^{(r)}\right)\right],
\end{split}
\end{equation}
and the characters in the right-hand side of equation (\ref{chchi5.3.2}) are given by equation (\ref{chchi5.1.3}). Finally, we obtain
\begin{equation}\label{chchi5.3.1}
\begin{split}
&\chi^{\varrho^{\eta}\boxtimes\xi^{\eta}}\left[\left(\left(g_1^{(1)},\ldots,g_{n_1}^{(1)}\right)s^{(1)}\right)\ldots
\left(\left(g_1^{(r)},\ldots,g_{n_r}^{(r)}\right)s^{(r)}\right)\right]\\
&=\chi^{\varrho^{\eta}}\left[\left(\left(g_1^{(1)},\ldots,g_{n_1}^{(1)}\right)s^{(1)}\right)\ldots
\left(\left(g_1^{(r)},\ldots,g_{n_r}^{(r)}\right)s^{(r)}\right)\right]
\chi^{\lambda^{\zeta_{\tau_1}}}(s^{(1)})\ldots\chi^{\lambda^{\zeta_{\tau_r}}}(s^{(r)}).
\end{split}
\end{equation}
Equations (\ref{chchi5.3.1}), (\ref{chchi5.3.2}), and (\ref{chchi5.1.3}) give us the desired characters $\chi^{\varrho^{\eta}\otimes\xi^{\eta}}$
of the representation $\varrho^{\eta}\otimes\xi^{\eta}$ explicitly,
in terms of the characters $\chi^{\pi^{\zeta_{\tau_1}}}$, $\ldots$, $\chi^{\pi^{\zeta_{\tau_r}}}$ of the irreducible representations $\pi^{\zeta_{\tau_1}}$, $\ldots$, $\pi^{\zeta_{\tau_r}}$ of $G$, and in terms of  the characters $\chi^{\lambda^{\zeta_{\tau_1}}}$, $\ldots$, $\chi^{\lambda^{\zeta_{\tau_r}}}$ of the irreducible representations of symmetric groups $S_{n_1}$, $\ldots$, $S_{n_r}$.
%%%%%%%%%%%%%%%%%%%%%%%%%%%%%%%%%%%%%%%%%%%%%%%%%%%%%%%%%%%%%%%%%%%%%%%%%%%%
%%%%%%%%%%%%%%%%%%%%%%%%%%%%%%%%%%%%%%%%%%%%%%%%%%%%%%%%%%%%%%%%%%%%%%%%
\subsection{Proof of Theorem \ref{Theoremch1.3.2}}
%%%%%%%%%%%%%%%%%%%%%%%%%%%%%%%%%%%%%%%%%%%%%%%%%%%%%%%%%%%%%%%%%%%%%%%%%%%%
%%%%%%%%%%%%%%%%%%%%%%%%%%%%%%%%%%%%%%%%%%%%%%%%%%%%%%%%%%%%%%%%%%%%%%%%
%%%%%%%%%%%%%%%%%%%%%%%%%%%%%%%%%%%%%%%%%%%%%%%%%%%%%%%%%%%%%%%%%%%%%%%%%%%%
%%%%%%%%%%%%%%%%%%%%%%%%%%%%%%%%%%%%%%%%%%%%%%%%%%%%%%%%%%%%%%%%%%%%%%%%
%%%%%%%%%%%%%%%%%%%%%%%%%%%%%%%%%%%%%%%%%%%%%%%%%%%%%%%%%%%%%%%%%%%%%%%%%%%%
%%%%%%%%%%%%%%%%%%%%%%%%%%%%%%%%%%%%%%%%%%%%%%%%%%%%%%%%%%%%%%%%%%%%%%%%
%%%%%%%%%%%%%%%%%%%%%%%%%%%%%%%%%%%%%%%%%%%%%%%%%%%%%%%%%%%%%%%%%%%%%%%%%%%%
Assume that the map $\Lambda: \widehat{G}\longrightarrow\Y$ that defines an irreducible representation $\pi^{\Lambda}$ is given by equation (\ref{chchi5.2.2}).
Recall that $H_n$ is defined by (\ref{HHH3.4.1}). Since $H_n$ is isomorphic to
$$
S_{|\lambda^{\zeta_{\tau_1}}|}(G)\times\ldots\times S_{|\lambda^{\zeta_{\tau_\tau}}|}(G),
$$
the value of the function $\Psi$  defined by equation (\ref{PSI}) on a group element $x$ of $H_n$ is equal to
$$
\Psi(x^{(1)}\ldots x^{(r)}),\;\;\; x^{(1)}\in S_{|\lambda^{\zeta_{\tau_1}}|}(G),
\ldots, x^{(r)}\in S_{|\lambda^{\zeta_{\tau_\tau}}|}(G).
$$
Here the element $x^{(1)}\ldots x^{(r)}$ corresponds to $x$ under the group isomorphism between
$H_n$ and $S_{|\lambda^{\zeta_{\tau_1}}|}(G)\times\ldots\times S_{|\lambda^{\zeta_{\tau_\tau}}|}(G)$.

From the very definition of the function $\Psi$, see equation (\ref{PSI}), it follows that
\begin{equation}\label{PT1}
\Psi(x^{(1)}\ldots x^{(r)})=\Psi(x^{(1)})\ldots\Psi(x^{(r)}).
\end{equation}
Taking into account the formulae for the character $\chi^{\varrho^{\eta}\otimes\zeta^{\eta}}$ (see equations
(\ref{chchi5.3.1}), (\ref{chchi5.3.2})), and writing
$$
x^{(1)}=\left(d^{(1)},s^{(1)}\right),\ldots,x^{(r)}=\left(d^{(r)},s^{(r)}\right),
$$
we obtain
\begin{equation}
\chi^{\varrho^{\eta}\otimes\xi^{\eta}}\left(x^{(1)}\ldots x^{(r)}\right)=\chi^{\varrho^{\xi_{\tau_1}}}(x^{(1)})
\chi^{\lambda^{\zeta_{\tau_1}}}(s^{(1)})\ldots
\chi^{\varrho^{\xi_{\tau_r}}}(x^{(r)})
\chi^{\lambda^{\zeta_{\tau_r}}}(s^{(r)}).
\end{equation}
Now we use the Frobenius reciprocity, and represent the inner
$\left<\Psi,\chi^{\Lambda}\right>_{S_n(G)}$ as  a product,
\begin{equation}\label{PT3}
\left<\Psi,\chi^{\Lambda}\right>_{S_n(G)}=\left<\Psi(.),\chi^{\varrho^{\zeta_{\tau_1}}}(.)\chi^{\lambda^{\zeta_{\tau_1}}}(.)\right>_{S_{|\lambda^{\zeta_{\tau_1}}|}(G)}\ldots
\left<\Psi(.),\chi^{\varrho^{\zeta_{\tau_r}}}(.)\chi^{\lambda^{\zeta_{\tau_r}}}(.)\right>_{S_{|\lambda^{\zeta_r}|}(G)},
\end{equation}
where we have agreed that
$$
\chi^{\lambda^{\zeta_{\tau_j}}}(x^{(j)})=\chi^{\lambda^{\zeta_{\tau_j}}}\left(
\left(d^{(j)},s^{(j)}\right)\right)=\chi^{\lambda^{\zeta_{\tau_j}}}\left(s^{(j)}\right),\;\; s^{(j)}\in S_{|\lambda^{\zeta_{\tau_j}}|},\;\; j=1,\ldots,r.
$$
Now assume that $\zeta\in\widehat{G}$, $|\lambda^{\zeta}|=n$, and
$\chi^{\varrho^{\zeta}}$ is the character of the representation
$\left(\varrho^{\zeta},\left(V^{\zeta}\right)^{\otimes |\lambda^{\zeta}|}\right)$ of the group
$S_{|\lambda^{\zeta}|}(G)$ defined by (\ref{HHH3.5.1}). Then
we can write
\begin{equation}\label{PT4}
\begin{split}
&\left<\Psi(.),\chi^{\varrho^{\zeta}}(.)\chi^{\lambda^{\zeta}}(.)\right>_{S_{n}(G)}=\left<J_{\zeta}(.),\chi^{\lambda^{\zeta}}(.)\right>_{S_n},
\end{split}
\end{equation}
where
\begin{equation}\label{PT5}
J_{\zeta}(s)=\int\limits_{G}\ldots\int\limits_{G}
\Psi\left[\left((g_1,\ldots,g_n),s\right)\right]
\overline{\chi^{\varrho^{\zeta}}\left[\left((g_1,\ldots,g_n),s\right)\right]}
d\mu_G(g_1)\ldots d\mu_{G}(g_n),
\end{equation}
and where $s\in S_n$.
\begin{prop}\label{PropPT6}
Assume that for each $\zeta\in\widehat{G}$, and each $r=1,2,\ldots $ the functions $\widehat{p}_r(\zeta)$ are defined by equation (\ref{ch1.3.3}). Then $J_{\zeta}(s)$ defined by equation (\ref{PT5}) is given by
\begin{equation}\label{PT7}
J_{\zeta}(s)=\widehat{p}_{\mu(s)}(\zeta),\;\; s\in S_n,
\end{equation}
where the Young diagram $\mu(s)$ determines the cyclic structure of $s$, and
$\widehat{p}_{\mu(s)}(\zeta)$ is the Newton power symmetric function in $\widehat{\SYM}(\zeta)$
parameterized by $\mu(s)$.
\end{prop}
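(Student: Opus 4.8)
The plan is to combine the explicit cycle-product formula for $\chi^{\varrho^{\zeta}}$ from Proposition \ref{chchiProposition5.1.5} with the analogous cycle-product form of $\Psi$, and then to exploit the invariance of the Haar measure to collapse each cycle integral to a single integral over $G$. First I would rewrite the integrand of (\ref{PT5}) cycle by cycle. Write $s\in S_n$ (with $|\lambda^{\zeta}|=n$) as a product of disjoint cycles $\sigma_1,\ldots,\sigma_p$ of lengths $l_1,\ldots,l_p$, so that $l_1+\cdots+l_p=n$, and for the cycle $\sigma_k=\left(i^{(k)}_1\ldots i^{(k)}_{l_k}\right)$ set $w_k=g_{i^{(k)}_{l_k}}\cdots g_{i^{(k)}_1}$. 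By the very definition of the type $\varrho$ of $\left((g_1,\ldots,g_n),s\right)$, the color of $\sigma_k$ is the conjugacy class of $w_k$, and the exponent $m_i(\varrho(c))$ counts the cycles of length $i$ and color $c$. Since $p_1,p_2,\ldots$ are central functions on $G$, formulae (\ref{PSI}) and (\ref{ch1.2.1}) give
$$
\Psi\left[\left((g_1,\ldots,g_n),s\right)\right]=\prod_{k=1}^{p}p_{l_k}(w_k),
$$
while Proposition \ref{chchiProposition5.1.5}, equation (\ref{chchi5.1.3}), yields
$$
\chi^{\varrho^{\zeta}}\left[\left((g_1,\ldots,g_n),s\right)\right]=\prod_{k=1}^{p}\chi^{\pi^{\zeta}}(w_k).
$$
Hence the integrand of (\ref{PT5}) factorizes as $\prod_{k=1}^{p}p_{l_k}(w_k)\overline{\chi^{\pi^{\zeta}}(w_k)}$.

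Next I would observe that the variables entering distinct cycle products are disjoint: each index $j\in\{1,\ldots,n\}$ lies in exactly one cycle, so the sets $\{g_j:j\in\sigma_k\}$ partition $\{g_1,\ldots,g_n\}$. Consequently the product measure $d\mu_G(g_1)\cdots d\mu_G(g_n)$ factorizes accordingly and the integral splits into one integral per cycle,
$$
J_{\zeta}(s)=\prod_{k=1}^{p}\int_{G^{l_k}}p_{l_k}(w_k)\,\overline{\chi^{\pi^{\zeta}}(w_k)}\prod_{j\in\sigma_k}d\mu_G(g_j).
$$

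The key step is then to evaluate a single-cycle integral. For a cycle of length $l$ with variables $h_1,\ldots,h_l$ and product $w=h_l\cdots h_1$, I would integrate out $h_1$ first: the map $h_1\mapsto (h_l\cdots h_2)h_1$ is left translation by a fixed element, so invariance of the normalized Haar measure $\mu_G$ on the compact group $G$ gives, for any function $F$ on $G$, the identity $\int_{G^l}F(h_l\cdots h_1)\prod_{i=1}^{l}d\mu_G(h_i)=\int_G F(g)\,d\mu_G(g)$. Applying this with $F=p_l\,\overline{\chi^{\pi^{\zeta}}}$ and recalling the definition (\ref{ch1.3.3}) of $\widehat{p}_l(\zeta)$ together with the inner product $\left<.,.\right>_G$, each cycle of length $l_k$ contributes exactly
$$
\int_G p_{l_k}(g)\,\overline{\chi^{\pi^{\zeta}}(g)}\,d\mu_G(g)=\left<p_{l_k},\chi^{\pi^{\zeta}}\right>_G=\widehat{p}_{l_k}(\zeta).
$$
Assembling the factors gives $J_{\zeta}(s)=\prod_{k=1}^{p}\widehat{p}_{l_k}(\zeta)=\widehat{p}_{\mu(s)}(\zeta)$, since $\mu(s)$ is precisely the partition with parts $l_1,\ldots,l_p$ and $\widehat{p}_{\mu(s)}(\zeta)$ is the corresponding product of power sums in $\widehat{\SYM}(\zeta)$. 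The only point requiring care, and the crux of the whole argument, is the Haar-invariance reduction of the $l$-fold cycle integral to a single integral over $G$; everything else is bookkeeping of the cycle structure of $s$ and of which variables appear in which cycle product.
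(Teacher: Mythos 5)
Your proof is correct and follows essentially the same route as the paper: both rest on the cycle-product formula of Proposition \ref{chchiProposition5.1.5}, the identification of $\Psi$ with a product of $p_{l_k}$ evaluated at the cycle products, and the Haar-invariance collapse of each cycle integral to $\left<p_{l_k},\chi^{\pi^{\zeta}}\right>_G=\widehat{p}_{l_k}(\zeta)$. The only difference is organizational: the paper verifies the one-cycle and two-cycle cases explicitly and asserts the general case is analogous, whereas you carry out the general factorization over an arbitrary number of disjoint cycles in one pass, which if anything makes the argument more complete.
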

\begin{proof}
Recall that the characters $\chi^{\varrho^{\zeta}}$ are given by Proposition \ref{chchiProposition5.1.5}. First, assume that $s=(i_1i_2\ldots i_n)$. From equation
(\ref{chchi5.1.3}) we obtain
\begin{equation}\label{PT7.1}
\chi^{\varrho^{\zeta}}\left[\left((g_1,\ldots,g_n),(i_1i_2\ldots i_n)\right)\right]
=\chi^{\pi^{\zeta}}\left(g_{i_n}\ldots g_{i_2}g_{i_1}\right).
\end{equation}
The conjugacy class of $\left((g_1,\ldots,g_n),(i_1i_2\ldots i_n)\right)$ in $S_n(G)$
is determined by the map $\varrho: [G]\longrightarrow\Y$ defined by
$$
\varrho(c)=\left\{
  \begin{array}{ll}
    (n), & \hbox{$c$ is the conjugacy class of $g_{i_n}\ldots g_{i_2}g_{i_1}$}, \\
    \emptyset, & \hbox{otherwise}
  \end{array}
\right.
$$
This gives
\begin{equation}\label{PT.8}
\Psi\left[\left((g_1,\ldots,g_n),(i_1i_2\ldots i_n)\right)\right]=p_n\left(y_1(g_{i_n}\ldots g_{i_2}g_{i_1}),
y_2(g_{i_n}\ldots g_{i_2}g_{i_1}),\ldots\right).
\end{equation}
Inserting (\ref{PT7.1}), (\ref{PT.8}) into (\ref{PT5}), and
using the invariance of the Haar measure under the shifts by elements of the group, we find
\begin{equation}
\begin{split}
J_{\zeta}\left((i_1i_2\ldots i_n)\right)=\left<p_n\left(y_1(.),y_2(.),\ldots\right),
\chi^{\pi^{\zeta}}(.)\right>_{G}
=\widehat{p}_n(\zeta),
\end{split}
\end{equation}
so equation (\Ref{PT7}) holds true for $s=(i_1i_2\ldots i_n)$.

Next, assume that $s\in S_n$ is given by
$$
s=(i_1i_2\ldots i_l)(j_1j_2\ldots j_k),\;\; k+l=n.
$$
Then (\ref{chchi5.1.3}) enables us to write
\begin{equation}\label{PT9}
\chi^{\varrho^{\zeta}}\left[\left((g_1,\ldots,g_n),(i_1i_2\ldots i_l)(j_1j_2\ldots j_k)\right)\right]=
\chi^{\pi^{\zeta}}(g_{i_l}\ldots g_{i_2}g_{i_1})\chi^{\pi^{\zeta}}(g_{j_k}\ldots g_{j_2}g_{j_1}).
\end{equation}
Denote by $c_1$ the conjugacy class of $g_{i_l}\ldots g_{i_1}g_{i_1}$ in $G$, and
by $c_2$ the conjugacy class of $g_{j_k}\ldots g_{j_2}g_{j_1}$ in $G$. Then the function
$\varrho: [G]\rightarrow \Y$ describing the conjugacy class of
$\left((g_1,\ldots,g_n), (i_1i_2\ldots i_l)(j_1j_2\ldots j_k)\right)$ in $S_n(G)$
can be represented as
$$
\varrho(c)=\left\{
  \begin{array}{ll}
    (l), & c=c_1, c_1\neq c_2, \\
    (k), & c=c_2,c_1\neq c_2, \\
    (l)\cup (k), & c=c_1, c_1=c_2, \\
    \emptyset, & \hbox{otherwise.}
  \end{array}
\right.
$$
This implies the following formula
\begin{equation}\label{PT10}
\begin{split}
&\Psi\left[\left((g_1,\ldots,g_n),(i_1i_2\ldots i_l)(j_1j_2\ldots j_k)\right)\right]\\
&=p_{(l)}\left(y_1(c_1),y_2(c_1),\ldots\right)p_{(k)}\left(y_1(c_2),y_2(c_2),\ldots\right).
\end{split}
\end{equation}
Inserting (\ref{PT9}) and (\ref{PT10}) into (\ref{PT5}) we find
\begin{equation}
\begin{split}
&J_{\zeta}\left((i_1i_2\ldots i_l)(j_1j_2\ldots j_k)\right)\\
&=\left<p_l\left(y_1(.),y_2(.),\ldots\right),
\chi^{\pi^{\zeta}}(.)\right>_{G}\left<p_k\left(y_1(.),y_2(.),\ldots\right),
\chi^{\pi^{\zeta}}(.)\right>_{G}\\
&=\widehat{p}_l(\zeta)\widehat{p}_k(\zeta),
\end{split}
\end{equation}
so (\ref{PT7}) holds true for $s=(i_1i_2\ldots i_l)(j_1j_2\ldots j_k)$ as well. The general case where
$s\in S_n$ is a product of an arbitrary number of cycles is considered in the same way.
\end{proof}
Taking into account  Proposition \ref{PropPT6} we see that equations (\ref{PT3}), (\ref{PT4}) give us
\begin{equation}
\begin{split}
&\left<\Psi,\chi^{\Lambda}\right>_{S_n(G)}=
\left<J_{\zeta_{\tau_1}},\chi^{\lambda^{\zeta_{\tau_1}}}\right>_{S_{|\lambda^{\zeta_{\tau_1}}|}}
\ldots\left<J_{\zeta_{\tau_r}},\chi^{\lambda^{\zeta_{\tau_r}}}\right>_{S_{|\lambda^{\zeta_{\tau_r}}|}}\\
&=\left<\widehat{p}_{\mu(.)}(\zeta_{\tau_1}),\chi^{\lambda^{\zeta_{\tau_1}}}(.)\right>_{S_{|\lambda^{\zeta_{\tau_1}}|}}
\ldots\left<\widehat{p}_{\mu(.)}(\zeta_{\tau_r}),\chi^{\lambda^{\zeta_{\tau_r}}}(.)\right>_{S_{|\lambda^{\zeta_{\tau_r}}|}}\\
&=\widehat{s}_{\lambda^{\zeta_{\tau_1}}}(\zeta_{\tau_1})\ldots\widehat{s}_{\lambda^{\zeta_{\tau_r}}}(\zeta_{\tau_r})
=\prod\limits_{\zeta\in\widehat{G}}\widehat{s}_{\Lambda(\zeta)}(\zeta).
\end{split}
\end{equation}
Theorem \ref{Theoremch1.3.2} is proved. \qed
%%%%%%%%%%%%%%%%%%%%%%%%%%%%%%%%%%%%%%%%%%%%%%%%%%%%%%%%%%%%%%%%%%%%%%%%%%%%%%%%%%
%%%%%%%%%%%%%%%%%%%%%%%%%%%%%%%%%%%%%%%%%%%%%%%%%%%%%%%%%%%%%%%%%%%%%%%%%%%%%%%%%%%%%%%%%%%%%%%%%
%%%%%%%%%%%%%%%%%%%%%%%%%%%%%%%%%%%%%%%%%%%%%%%%%%%%%%%%%%%%%%%%%%%%%%%%%%%%%%%%%%%%%%%%%%%%
%%%%%%%%%%%%%%%%%%%%%%%%%%%%%%%%%%%%%%%%%%%%%%%%%%%%%%%%%%%%%%%%%%%%%%%%%%%%%%%%%%%%%%%%%%%%%%%%%%%%%%%%%%%%%%%%
\section{Harmonic functions defined by the $z$-measures $M_z^{(n)}$}\label{SECTIONHF1}
\subsection{The branching graph $\Y(\widehat{G})$}
Set
\begin{equation}\label{4.1.2}
\Y(\widehat{G})=\bigsqcup\limits_{n=0}^{\infty}\Y_n(\widehat{G}),
\end{equation}
where $\Y_n(\widehat{G})$ is defined by (\ref{YnG}).
The set $\Y_0(\widehat{G})$ is, by definition, the empty set. Note that $\Y_1(\widehat{G})$ can be identified
with $\widehat{G}$. The canonical inclusion $i_{n,k}:\; S_k(G)\rightarrow S_n(G)$ is defined as
$i_{n,k}(g,s)=(\tilde{g},\tilde{s})$, where
$$
\tilde{g}=(g,e_{G},\ldots,e_{G})\in G^k\times G^{n-k}=G^n,
$$
and
$$
\tilde{s}=s(k+1)(k+2)\ldots(n)\in S_n.
$$
Under the inclusion $i_{n,k}$, $S_k(G)$ is a subgroup of $S_n(G)$.

Let us introduce the following notation. If $\Lambda\in\Y_n(\widehat{G})$ is obtained by adding a box to one of the Young diagrams from the family $M\in\Y_{n-1}(\widehat{G})$, we write
$M\nearrow\Lambda$, and say that the family $M$ is adjacent to the family $\Lambda$.

Assume that $M\in\Y_{n-1}(\widehat{G})$, $\Lambda\in\Y_n(\widehat{G})$, and $M\nearrow\Lambda$.
Then there exists $\zeta_{M,\Lambda}\in\widehat{G}$ such that the Young diagram $\Lambda(\zeta_{M,\Lambda})$ is obtained
from the Young diagram $M\left(\zeta_{M,\Lambda}\right)$ by adding a box. Thus $\zeta_{M,\Lambda}\in\widehat{G}$
is uniquely determined by $M,\Lambda$ such that $M\nearrow\Lambda$.
\begin{thm}\label{THEOREM4.5.1}
For $\Lambda\in\Y_n(\widehat{G})$, and the corresponding irreducible unitary representation $\pi^{\Lambda}$ of $S_n(G)$,
the restriction of $\pi^{\Lambda}$ to subgroup $S_{n-1}(G)$ has the following decomposition into irreducible unitary
representations
\begin{equation}\label{4.5.2}
\Res_{S_{n-1}(G)}^{S_n(G)}\pi^{\Lambda}\cong\underset{M\in\Y_{n-1}(\widehat{G}):\; M\nearrow\lambda}{\bigoplus}\dim\zeta_{M,\Lambda}\;\pi^{M}.
\end{equation}
\end{thm}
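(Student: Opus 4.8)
The plan is to realize the one-step restriction as a composition of two restrictions through the intermediate Young subgroup
$$
S_{n-1}(G)\subset S_{n-1}(G)\times S_1(G)\subset S_n(G),
$$
where $S_1(G)=G$ is the copy acting on the $n$-th coordinate and $S_{n-1}(G)$ is embedded via $i_{n,n-1}$ (so that the $n$-th colour is $e_G$ and $n$ is a fixed point). The inner restriction is elementary: for any irreducible $\pi^M\boxtimes\sigma$ of $S_{n-1}(G)\times S_1(G)$, restricting $\sigma$ from $S_1(G)=G$ to the trivial subgroup $\{e_G\}$ replaces it by $\dim\sigma$ copies of the trivial representation, so $\Res^{S_{n-1}(G)\times S_1(G)}_{S_{n-1}(G)}\left(\pi^M\boxtimes\pi^{\zeta}\right)=(\dim\zeta)\,\pi^M$. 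Hence everything reduces to the outer step, the branching to the Young subgroup, which I claim is the Pieri-type rule
$$
\Res^{S_n(G)}_{S_{n-1}(G)\times S_1(G)}\pi^{\Lambda}\cong\bigoplus_{M\in\Y_{n-1}(\widehat G):\,M\nearrow\Lambda}\pi^{M}\boxtimes\pi^{\zeta_{M,\Lambda}},
$$
each summand occurring once. Combining the two stages then yields (\ref{4.5.2}), the multiplicity $\dim\zeta_{M,\Lambda}$ arising entirely from trivializing $\pi^{\zeta_{M,\Lambda}}$ on the coordinate copy $S_1(G)=G$.

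To prove this Pieri rule I would pass to multiplicities and use Frobenius reciprocity, so that the multiplicity of $\pi^M\boxtimes\pi^{\zeta}$ equals $\left[\pi^{\Lambda}:\Ind^{S_n(G)}_{S_{n-1}(G)\times S_1(G)}\left(\pi^M\boxtimes\pi^{\zeta}\right)\right]$. Writing the induced representation (which decomposes discretely, the index $[S_n(G):S_{n-1}(G)\times S_1(G)]=n$ being finite) as $\sum_{\Lambda'}c_{\Lambda'}\chi^{\Lambda'}$ and pairing against the function $\Psi$ of Section \ref{SECTIONPROOFT1}, I would exploit the multiplicativity (\ref{PT1}) of $\Psi$ across the factors of the Young subgroup, together with Frobenius reciprocity for class functions, to factor
$$
\left<\Psi,\Ind^{S_n(G)}_{S_{n-1}(G)\times S_1(G)}\left(\chi^M\boxtimes\chi^{\zeta}\right)\right>_{S_n(G)}=\left<\Psi,\chi^M\right>_{S_{n-1}(G)}\cdot\left<\Psi,\chi^{\zeta}\right>_{S_1(G)}.
$$
By Theorem \ref{Theoremch1.3.2} the first factor is $\prod_{\eta\in\widehat G}\widehat s_{M(\eta)}(\eta)$, while the second, computed on $S_1(G)=G$ where $\Psi$ reduces to $p_1$ and $\chi^{\zeta}$ to $\chi^{\pi^{\zeta}}$, equals $\widehat p_1(\zeta)=\widehat s_{(1)}(\zeta)$. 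Applying Theorem \ref{Theoremch1.3.2} termwise to the left side, and using that the products $\prod_{\eta}\widehat s_{\Lambda'(\eta)}(\eta)$ form a basis while the $\widehat p_r(\zeta)$ may be treated as independent indeterminates (as $p_r$ ranges over central functions on $G$), I would match coefficients: the classical Pieri rule $\widehat s_{M(\zeta)}(\zeta)\,\widehat s_{(1)}(\zeta)=\sum_{\mu:\,M(\zeta)\nearrow\mu}\widehat s_{\mu}(\zeta)$ forces $c_{\Lambda'}=1$ exactly when $\Lambda'$ is obtained from $M$ by adjoining one box to $M(\zeta)$ (so $M\nearrow\Lambda'$ and $\zeta=\zeta_{M,\Lambda'}$), and $c_{\Lambda'}=0$ otherwise.

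The main obstacle is this middle step: converting induction from the Young subgroup into ordinary multiplication in the characteristic ring $\bigotimes_{\zeta}\widehat{\SYM}(\zeta)$. Two points need care. First, the multiplicativity (\ref{PT1}) of $\Psi$ across the factors of the Young subgroup is what turns the reciprocity pairing into a product of pairings; I would verify that the restriction of $\Psi$ to $S_{n-1}(G)\times S_1(G)$ genuinely splits as $\Psi^{(n-1)}\otimes\Psi^{(1)}$. Second, the passage from a symmetric-function identity to an identity of multiplicities relies on treating the $\widehat p_r(\zeta)$ as free indeterminates; this rests on the faithfulness built into the definition $\widehat{\SYM}(\zeta)=\C[\widehat p_r(\zeta),\,r\geq 1]$ and on $\widehat G$ being at most countable, so that finitely many coordinates suffice and enough central functions $p_r$ are available to separate them. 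Once the Pieri rule is established, the two elementary restriction steps assemble directly into (\ref{4.5.2}).
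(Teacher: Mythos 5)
Your proof is correct, and it takes a genuinely different route from the paper: the paper gives no argument of its own for Theorem \ref{THEOREM4.5.1} — its proof is a citation to Hora, Hirai, and Hirai \cite{HoraHiraiHiraiII}, Section 1.3, where the branching rule is obtained from the explicit realization of $\pi^{\Lambda}$ as a representation induced from the subgroup $H_n$. You instead make the statement an internal corollary of the paper's own machinery: you factor the restriction through the Young-type subgroup $S_{n-1}(G)\times S_1(G)$, reduce the outer (Pieri) step by Frobenius reciprocity to decomposing $\Ind^{S_n(G)}_{S_{n-1}(G)\times S_1(G)}\left(\pi^{M}\boxtimes\pi^{\zeta}\right)$, and identify the multiplicities by pairing characters with $\Psi$, using the factorization of the restriction of $\Psi$ (the analogue of (\ref{PT1})), Theorem \ref{Theoremch1.3.2}, the classical Pieri rule in $\widehat{\SYM}(\zeta)$, and the linear independence of the products $\prod_{\eta\in\widehat{G}}\widehat{s}_{\Lambda'(\eta)}(\eta)$; the multiplicity $\dim\zeta_{M,\Lambda}$ then falls out of the elementary inner restriction. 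All the steps you flagged do go through: the subgroup has finite index $n$, so the induced representation is finite dimensional and Frobenius reciprocity holds with the normalized Haar measures; the irreducibles of the product group are exactly the $\pi^{M}\boxtimes\pi^{\zeta}$ because the groups are compact; and the delicate passage from a numerical identity to an identity of coefficients is legitimate, since only finitely many $\eta\in\widehat{G}$ and only $r\leq n$ occur, and, the characters $\chi^{\pi^{\eta}}$ being orthonormal in $L^2(G)$, one may take the central functions $p_r$ to be finite linear combinations of them realizing arbitrary prescribed values of the relevant $\widehat{p}_r(\eta)$, so the identity holds as polynomials and coefficients can be matched. What your route buys is self-containedness — the branching rule becomes a consequence of Theorem \ref{Theoremch1.3.2}, which the paper does prove — at the cost of the indeterminacy argument just described; what the paper's citation buys is brevity and independence from the symmetric-function formalism, since \cite{HoraHiraiHiraiII} establishes the rule directly from the induced-module construction. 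As a sanity check, your decomposition is consistent with the dimension formula (\ref{6.1.2}).
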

\begin{proof} See  Hora, Hirai, Hirai \cite{HoraHiraiHiraiII}, Section 1.3.
\end{proof}
\begin{defn}We consider  $\Y(\widehat{G})$ defined by (\ref{4.1.2}) as a branching graph for the wreath
products $S_0(G)$, $S_1(G)$, $S_2(G)$, $\ldots$. The branching rule (\ref{4.5.2}) gives the edge structure of $\Y(\widehat{G})$. That is, $M\in\Y_{n-1}(\widehat{G})$ and $\Lambda\in\Y_n(\widehat{G})$ are joined by an edge if and only if $M\nearrow\Lambda$. The multiplicity of the corresponding edge is given by $\dim\zeta_{M,\Lambda}$.
\end{defn}
\begin{defn} A function $\varphi: \Y(\widehat{G})\rightarrow\C$ is said to be harmonic if
\begin{equation}\label{4.7.2}
\varphi(M)=\sum\limits_{\Lambda:\; \Lambda\searrow M}\dim\zeta_{\Lambda,M}\;\varphi(\Lambda),
\end{equation}
and $\varphi(\emptyset)=1$.
\end{defn}
\subsection{Dimensions of irreducible representations}
The dimensions of the irreducible representations of $S_n(G)$ are given by the following Proposition.
\begin{prop}Assume that $G$ is a compact group such that $\widehat{G}$ is at most countable set.
Let $\Lambda:\;\widehat{G}\longrightarrow\Y$ be the map that defines an irreducible representation of $S_n(G)$.
We have
\begin{equation}\label{6.1.2}
\DIM\Lambda:=\chi^{\Lambda}\left(e_{S_n(G)}\right)
=n!\prod\limits_{\zeta\in\widehat{G}}\left(\dim\zeta\right)^{|\Lambda(\zeta)|}\frac{\dim\Lambda(\zeta)}{|\Lambda(\zeta)|!},
\end{equation}
where $\chi^{\Lambda}$ is the character of the irreducible representation of $S_n(G)$ parameterized by $\Lambda$,
and $e_{S_n(G)}$ is the unit element of $S_n(G)$.
\end{prop}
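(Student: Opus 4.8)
The plan is to read off $\DIM\Lambda$ directly from the Hirai–Hirai–Hora construction of $\pi^{\Lambda}$ as an induced representation. By Theorem \ref{TheoremHHH3.8.1} we have $\pi^{\Lambda}=\Ind_{H_n}^{S_n(G)}\left(\varrho^{\eta}\otimes\xi^{\eta}\right)$, and since the dimension of an induced representation is the index of the subgroup times the dimension of the inducing representation,
\begin{equation}
\DIM\Lambda=\left[S_n(G):H_n\right]\cdot\dim\left(\varrho^{\eta}\otimes\xi^{\eta}\right).
\end{equation}
Thus the proof reduces to computing these two factors, which is essentially bookkeeping given the explicit descriptions of $H_n$, $\varrho^{\eta}$, and $\xi^{\eta}$.

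For the index, I would use that the normal subgroup $G^n=D_n(G)$ of $S_n(G)$ is contained in $H_n=G^n\rtimes S^{\eta}$, so that the left cosets of $H_n$ in $S_n(G)$ are in bijection with the left cosets of $S^{\eta}$ in $S_n$. Indeed, a coset $(d,s)H_n$ equals $G^n\times sS^{\eta}$ as a subset of $G^n\times S_n$, because $s$ acts by a permutation on $G^n$. Since $S^{\eta}\cong\prod_{\zeta\in\widehat{G}}S_{|\Lambda(\zeta)|}$ by (\ref{HHH3.4.1}), this gives the multinomial coefficient
\begin{equation}
\left[S_n(G):H_n\right]=\left[S_n:S^{\eta}\right]=\frac{n!}{\prod\limits_{\zeta\in\widehat{G}}|\Lambda(\zeta)|!}.
\end{equation}

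For the dimension of the inducing representation, I would exploit its tensor-product structure. The representation space of $\varrho^{\eta}$ is $\otimes_{\zeta}\left(V^{\zeta}\right)^{\otimes|\Lambda(\zeta)|}$, so $\dim\varrho^{\eta}=\prod_{\zeta}\left(\dim\zeta\right)^{|\Lambda(\zeta)|}$; and the representation space of $\xi^{\eta}$ is $\otimes_{\zeta}V^{\lambda^{\zeta}}$, so $\dim\xi^{\eta}=\prod_{\zeta}\dim\Lambda(\zeta)$, where $\dim\Lambda(\zeta)$ denotes the dimension of the symmetric-group irreducible representation $\pi^{\lambda^{\zeta}}$. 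Hence
\begin{equation}
\dim\left(\varrho^{\eta}\otimes\xi^{\eta}\right)=\prod\limits_{\zeta\in\widehat{G}}\left(\dim\zeta\right)^{|\Lambda(\zeta)|}\dim\Lambda(\zeta).
\end{equation}

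Multiplying the two factors and regrouping the product over $\zeta$ then yields formula (\ref{6.1.2}) at once. I do not expect a genuine obstacle here; the only point requiring a little care is the reduction of the index $[S_n(G):H_n]$ to the symmetric-group index $[S_n:S^{\eta}]$, which rests on the common normal factor $G^n$ lying inside $H_n$. Everything else is a direct substitution of the dimensions recorded in the construction preceding Theorem \ref{TheoremHHH3.8.1}.
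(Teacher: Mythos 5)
Your proof is correct, but it follows a genuinely different route from the paper's. The paper does not compute anything: it simply cites Macdonald (Appendix B, \S 9) for the case of finite $G$, cites Hora--Hirai--Hirai \cite{HoraHiraiHiraiII}, Section 1.2, for the case of compact non-finite $G$, and remarks that the two cases yield the same formula because $\DIM\Lambda$ counts paths from $\emptyset$ to $\Lambda$ in the branching graph $\Y(\widehat{G})$ --- an interpretation that resurfaces later when harmonic functions on $\Y(\widehat{G})$ are introduced. You instead derive the formula directly from Theorem \ref{TheoremHHH3.8.1}: writing $\pi^{\Lambda}=\Ind_{H_n}^{S_n(G)}\left(\varrho^{\eta}\otimes\xi^{\eta}\right)$, you compute $\left[S_n(G):H_n\right]=\left[S_n:S^{\eta}\right]=n!/\prod_{\zeta}|\Lambda(\zeta)|!$ (your reduction via the common normal factor $G^n$ is the right observation and is correct) and $\dim\left(\varrho^{\eta}\otimes\xi^{\eta}\right)=\prod_{\zeta}\left(\dim\zeta\right)^{|\Lambda(\zeta)|}\dim\Lambda(\zeta)$, then multiply. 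This buys a self-contained proof, modulo only the construction the paper already quotes from \cite{HiraiHiraiHoraI}, and it treats the finite and compact non-finite cases uniformly rather than by separate citations. The one point you use tacitly is that $\dim\Ind_{H}^{K}W=[K:H]\cdot\dim W$ for a closed subgroup $H$ of \emph{finite} index in a compact group $K$; since $H_n$ contains $G^n$ and $[S_n(G):H_n]=[S_n:S^{\eta}]<\infty$, this standard fact applies, so there is no gap --- but it deserves an explicit sentence, since for infinite-index subgroups of compact groups no such formula holds.
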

\begin{proof}In the case where $G$ is a finite group formula (\ref{6.1.2}) is well-known, see Macdonald \cite{Macdonald},
Appendix B, \S 9. In the case where $G$ is a compact non-finite group, formula (\ref{6.1.2}) appears
in Hora, Hirai, Hirai \cite{HoraHiraiHiraiII}, Section 1.2. The fact that for finite and for compact non-finite groups
$G$ the formula for $\DIM\Lambda$ has the same form follows from the interpretation of $\DIM\Lambda$
as the number of paths on the branching graph $\Y\left(\widehat{G}\right)$  from $\emptyset$ to $\Lambda$.
\end{proof}
%%%%%%%%%%%%%%%%%%%%%%%%%%%%%%%%%%%%%%%%%%%%%
%%%%%%%%%%%%%%%%%%%%%%%%%%%%%%%%%%%%%%%%%%%%
%%%%%%%%%%%%%%%%%%%%%%%%%%%%%%%%%%%%%%%%%%%%%%%%%
%%%%%%%%%%%%%%%%%%%%%%%%%%%%%%%%%%%%%%%%%%%%%%%%%
\subsection{Harmonic functions $\varphi_z$}
Define $\varphi_z:\; \Y(\widehat{G})\rightarrow\R_{\geq 0}$ by
\begin{equation}\label{6.5.2}
\varphi_z(\Lambda)=\frac{M_z^{(n)}(\Lambda)}{\DIM\Lambda},
\end{equation}
where $M_z^{(n)}$ is given by equation (\ref{13.2.2.2}).
\begin{thm}\label{THEOREM6.5.2}The function $\varphi_z$ is harmonic on $\Y(\widehat{G})$.
\end{thm}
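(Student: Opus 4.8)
The plan is to avoid the explicit formula (\ref{13.2.2.2}) altogether and instead deduce harmonicity from the fact that $\chi_z$ is a single function on $S_{\infty}(G)$, so that the family $\left(M_z^{(n)}\right)_n$ is automatically coherent with respect to the branching rule (\ref{4.5.2}). First I would dispose of the normalization: at the bottom vertex $\emptyset$ of the graph one has $M_z^{(0)}(\emptyset)=1$ and $\DIM\emptyset=1$, so $\varphi_z(\emptyset)=1$, and only the recursion (\ref{4.7.2}) remains to be verified.

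Fix $M\in\Y_{n-1}(\widehat{G})$. The key input is the consistency of the restrictions: under the canonical inclusions $S_{n-1}(G)\subset S_n(G)\subset S_{\infty}(G)$ the function $\chi_z|_{S_n(G)}$ restricted to $S_{n-1}(G)$ equals $\chi_z|_{S_{n-1}(G)}$. Starting from the expansion (\ref{13.1.2}) at level $n$ and applying Theorem \ref{THEOREM4.5.1}, which gives $\chi^{\Lambda}|_{S_{n-1}(G)}=\sum_{M:\,M\nearrow\Lambda}\dim\zeta_{M,\Lambda}\,\chi^{M}$, I would obtain
\[
\chi_z|_{S_{n-1}(G)}=\sum_{M\in\Y_{n-1}(\widehat{G})}\left(\;\sum_{\Lambda:\,M\nearrow\Lambda}\dim\zeta_{M,\Lambda}\,\frac{M_z^{(n)}(\Lambda)}{\DIM\Lambda}\right)\chi^{M}.
\]
Comparing with the expansion (\ref{13.1.2}) written directly at level $n-1$, namely $\chi_z|_{S_{n-1}(G)}=\sum_{M}\frac{M_z^{(n-1)}(M)}{\DIM M}\chi^{M}$, and using that the irreducible characters $\chi^{M}$ of the compact group $S_{n-1}(G)$ are orthonormal (hence linearly independent), equating the coefficient of each $\chi^{M}$ yields
\[
\frac{M_z^{(n-1)}(M)}{\DIM M}=\sum_{\Lambda:\,M\nearrow\Lambda}\dim\zeta_{M,\Lambda}\,\frac{M_z^{(n)}(\Lambda)}{\DIM\Lambda},
\]
which is precisely the harmonicity relation (\ref{4.7.2}) for $\varphi_z$.

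Two technical points deserve attention. Interchanging the order of summation over $\Lambda$ and $M$ above is legitimate because the double series converges absolutely: using $|\chi^{M}(x)|\le\DIM M$ together with the identity $\DIM\Lambda=\sum_{M:\,M\nearrow\Lambda}\dim\zeta_{M,\Lambda}\,\DIM M$ (restriction preserves dimension), the sum of absolute values is bounded by $\sum_{\Lambda}M_z^{(n)}(\Lambda)=1$. This boundedness, which rests on $M_z^{(n)}$ being a probability measure on $\Y_n(\widehat{G})$, is what makes the formal manipulation rigorous even when $\widehat{G}$ is infinite. The main (though mild) obstacle I anticipate is precisely the consistency step: one must confirm that the embedding $i_{n,n-1}$ defining the edges of $\Y(\widehat{G})$ is the same embedding under which $\chi_z$ is defined via (\ref{12.4.2}), so that the two restrictions of $\chi_z$ agree on the nose; once this is checked the rest is purely formal and, notably, requires neither (\ref{13.2.2.2}) nor the dimension formula (\ref{6.1.2}).
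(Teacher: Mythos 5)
Your proof is correct, but it is genuinely different from the one in the paper. The paper proves Theorem \ref{THEOREM6.5.2} by direct computation from the explicit formula: it rewrites $\varphi_z(\Lambda)$ in the factorized form (\ref{6.5.3.1}) as a product over $\zeta\in\widehat{G}$ of normalized $z$-measures $\mathfrak{m}_{\alpha(\zeta)}^{(|\Lambda(\zeta)|)}$ for the symmetric groups, then verifies the recursion (\ref{6.5.3}) using three ingredients: the known harmonicity of $\lambda\mapsto \mathfrak{m}_{\alpha}^{(|\lambda|)}(\lambda)/\dim\lambda$ on the Young graph (cited from Borodin--Olshanski), the Pochhammer recursion $(t)_{k+1}=(t)_k(t+k)$, and the Parseval identity (\ref{Parseval}), which is exactly what makes the sum over $\zeta_{M,\Lambda}\in\widehat{G}$ produce the factor $I+n-1$. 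Your argument instead deduces coherence of the family $\left(M_z^{(n)}\right)_n$ structurally, from the fact that all levels are expansions of the single central function $\chi_z$ on $S_{\infty}(G)$, combined with the branching rule of Theorem \ref{THEOREM4.5.1} and orthonormality of the irreducible characters of the compact group $S_{n-1}(G)$; this is the standard character-to-harmonic-function correspondence of asymptotic representation theory, and your treatment of the two analytic points (pointwise absolute and uniform convergence of (\ref{13.1.2}) via $|\chi^{\Lambda}(x)|\le\DIM\Lambda$ and $\sum_{\Lambda}M_z^{(n)}(\Lambda)=1$, and the interchange of sums via the dimension identity $\DIM\Lambda=\sum_{M:\,M\nearrow\Lambda}\dim\zeta_{M,\Lambda}\DIM M$, which is just (\ref{4.5.2}) evaluated at the identity) is adequate. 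What each approach buys: yours is softer and more general --- it uses neither Theorem \ref{Theorem13.2.2} nor the dimension formula (\ref{6.1.2}), and would apply verbatim to the coefficient system of any character of $S_{\infty}(G)$; the paper's computation, though it consumes more inputs, produces the factorized expression (\ref{6.5.3.1}) as a by-product, which is then reused in the proof of Theorem \ref{THEOREMSPECTRAL}, and it doubles as a consistency check of the explicit formula (\ref{13.2.2.2}) against the graph structure, making visible exactly where the Parseval identity enters.
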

\begin{proof} We need to check that
\begin{equation}\label{6.5.3}
\sum\limits_{\Lambda:\; \Lambda\searrow M}
\dim(\zeta_{M,\Lambda})\;\varphi_z(\Lambda)=\varphi_z(M)
\end{equation}
is satisfied for all $M\in\Y_{n-1}(\widehat{G})$. We can write $\varphi_z(\Lambda)$ explicitly as
\begin{equation}\label{6.5.3.1}
\varphi_z(\Lambda)=\frac{1}{(I)_n}
\prod\limits_{\zeta\in\widehat{G}}\frac{(\alpha(\zeta)
\overline{\alpha(\zeta)})_{|\Lambda(\zeta)|}}{\left(\dim\zeta\right)^{|\Lambda(\zeta)|}}
\frac{\mathfrak{m}_{\alpha(\zeta)}^{(|\Lambda(\zeta)|)}(\Lambda(\zeta))}{\dim\Lambda(\zeta)},
\end{equation}
as it follows from (\ref{6.2.1}), (\ref{6.1.2}), and (\ref{6.5.2}). Let $M:\; \widehat{G}\longrightarrow \Y$
be a fixed element of $\Y_{n-1}(\widehat{G})$. Each map $\Lambda$, $\Lambda\in\Y_n(\widehat{G})$, such that $\Lambda\searrow M$
can be defined by \\
(a) $\Lambda(\zeta)=M(\zeta)$, $\zeta\in\widehat{G}$, $\zeta\neq \zeta_{M,\Lambda}$;\\
and by\\
(b) $\Lambda(\zeta_{M,\Lambda})$ is such that the Young diagram $\Lambda(\zeta_{M,\Lambda})$ is obtained from the Young diagram  $M(\zeta_{M,\Lambda})$ by adding one box.\\
Now we have
\begin{equation}\label{6.5.4}
\begin{split}
&\sum\limits_{\Lambda:\;\Lambda\searrow M}\dim(\zeta_{M,\Lambda})\varphi_z(\Lambda)\\
&=\frac{1}{(I)_n}
\sum\limits_{\zeta_{M,\Lambda}\in\widehat{G}}\;\;
\sum\limits_{\Lambda(\zeta_{M,\Lambda})\searrow\; M(\zeta_{M,\Lambda})}
\frac{\mathfrak{m}_{\alpha(\zeta_{M,\Lambda})}^{(|\Lambda(\zeta_{M,\Lambda})|)}(\Lambda(\zeta_{M,\Lambda}))}{\dim
\Lambda(\zeta_{M,\Lambda})}
\left(\prod\limits_{\zeta\in\widehat{G}}\frac{1}{(\dim\zeta)^{|M(\zeta)|}}\right)\\
&\times\underset{\zeta\neq\zeta_{M,\Lambda}}{\prod\limits_{\zeta\in\widehat{G}}}
\frac{\mathfrak{m}_{\alpha(\zeta)}^{|M(\zeta)|)}(M(\zeta))}{\dim
M(\zeta)}
\underset{\zeta\neq\zeta_{M,\Lambda}}{\prod\limits_{\zeta\in\widehat{G}}}(\alpha(\zeta)\overline{\alpha(\zeta)})_{|M(\zeta)|}.
\end{split}
\end{equation}
Since
$$
\hat{\varphi}_{\alpha}(\lambda)=\frac{\mathfrak{m}_{\alpha}^{(|\lambda|)}(\lambda)}{\dim\lambda}
$$
is a harmonic function on $\Y$ (see, for example, Borodin and Olshanski \cite{BorodinOlshanskiMarkov}), we have
\begin{equation}\label{6.5.5}
\sum\limits_{\Lambda(\zeta_{M,\Lambda})\searrow\; M(\zeta_{M,\Lambda})}\frac{\mathfrak{m}_{\alpha(\zeta_{M,\Lambda})}^{(|\Lambda(\zeta_{M,\Lambda})|)}(\Lambda(\zeta_{M,\Lambda}))}{\dim
\Lambda(\zeta_{M,\Lambda})}
=\frac{\mathfrak{m}_{\alpha(\zeta_{M,\Lambda})}^{(|M(\zeta_{M,\Lambda})|)}(M(\zeta_{M,\Lambda}))}{\dim
M(\zeta_{M,\Lambda})}.
\end{equation}
In addition, note that
$$
\left(\alpha(\zeta_{M,\Lambda})\overline{\alpha(\zeta_{M,\Lambda})}\right)_{|\Lambda(\zeta_{M,\Lambda})|}
=\left(\alpha(\zeta_{M,\Lambda})\overline{\alpha(\zeta_{M,\Lambda})}\right)_{|M(\zeta_{M,\Lambda})|}
\left(\alpha(\zeta_{M,\Lambda})\overline{\alpha(\zeta_{M,\Lambda})}+|M(\zeta_{M,\Lambda})|\right).
$$
The Parseval identity (equation (\ref{Parseval})) implies
\begin{equation}\label{6.5.6}
\sum\limits_{\zeta_{M,\Lambda}\in\widehat{G}}\left(\alpha(\zeta_{M,\Lambda})\overline{\alpha(\zeta_{M,\Lambda})}+|M(\zeta_{M,\Lambda})|\right)
=I+n-1.
\end{equation}
It follows from (\ref{6.5.3.1}), (\ref{6.5.4}), (\ref{6.5.5}) and (\ref{6.5.6}) that equation (\ref{6.5.3}) holds true.
%%%%%%%%%%%%%%%%%%%%%%%%%%%%%%%%%%%%%%%%%%%%%%%%%%%%%%%%%%%%%%%%%%%%%%%%%%%%%%%%%%%%%%%%%%%%%%%%%%%%%%%%%%%%%%%%%%%%%%%
\end{proof}
%%%%%%%%%%%%%%%%%%%%%%%%%%%%%%%%%%%%%%%%%%%%%%%%%%%%%%%%%%%%%%%%%%%%%%%%%%%%%%%%%%%%%%%%%%%%%%%%%%%%%%%%%%%
%%%%%%%%%%%%%%%%%%%%%%%%%%%%%%%%%%%%%%%%%%%%%%%%%%%%%%%%%%%%%%%%%%%%%%%%%%%%%%%%%%%%%%%%%%%%%%%%%%%%%%%%%%%
%%%%%%%%%%%%%%%%%%%%%%%%%%%%%%%%%%%%%%%%%%%%%%%%%%%%%%%%%%%%%%%%%%%%%%%%%%%%%%%%%%%%%%%%%%%%%%%%%%%%%%%%%%%%
\section{The spectral $z$-measures}\label{SECTIONSPECTRAL}
The fact that the $z$-measures $M_z^{(n)}$ can be understood in terms of
harmonic functions $\varphi_z$ on $\Y(G)$ enables us to obtain an integral representation
for $M_z^{(n)}$. This representation will follow from Theorem \ref{THEOREM7.1.1} below.
\begin{thm}\label{THEOREM7.1.1}
Any harmonic function on $\Y(\widehat{G})$ admits the following representation
\begin{equation}\label{7.1.2}
\varphi(\Lambda)=\int\limits_{\triangle}\mathbb{K}(\Lambda,\omega)dP(\omega).
\end{equation}
Here,
the probability measure $P$ on $\triangle$ is uniquely determined by $\varphi$,
and
\begin{equation}\label{7.1.3}
\begin{split}
&\triangle=\biggl\{(\alpha,\beta,\delta)|\;
\alpha=\left(\alpha_{\zeta,i}\right)_{\zeta\in\widehat{G},\; i\in\mathbb{N}},\;\;
\beta=\left(\beta_{\zeta,i}\right)_{\zeta\in\widehat{G},\; i\in\mathbb{N}},\;\;
\delta=(\delta_{\zeta})_{\zeta\in\widehat{G}};\\
&\alpha_{\zeta,1}\geq\alpha_{\zeta,2}\geq \ldots\geq 0;\;\;
\beta_{\zeta,1}\geq\beta_{\zeta,2}\geq\ldots\geq 0;\;\;\delta_{\zeta}\geq 0;\\
&\sum\limits_{i=1}^{\infty}\left(\alpha_{\zeta,i}+\beta_{\zeta,i}\right)\leq\delta_{\zeta},\;
\forall\zeta\in\widehat{G};\;\;\sum\limits_{\zeta\in\widehat{G}}\delta_{\zeta}=1\biggr\}
\end{split}
\end{equation}
is the generalized Thoma set. The kernel $\mathbb{K}(\Lambda,\omega)$ is defined by
\begin{equation}\label{7.1.4}
\mathbb{K}(\Lambda,\omega)=\prod\limits_{\zeta\in\widehat{G}}
\frac{1}{(\dim\zeta)^{|\Lambda(\zeta)|}}s^{\zeta}_{\Lambda(\zeta)}(\omega),
\end{equation}
 where
\begin{equation}\label{7.1.5}
s^{\zeta}_{\Lambda(\zeta)}(\omega)=\sum\limits_{\varrho\in\Y_{\left|\Lambda(\zeta)\right|}}
\frac{1}{z_{\varrho}}\chi^{\Lambda(\zeta)}_{\varrho}p_{\varrho}^{\zeta}(\omega),
\end{equation}
The supersymmetric power sums $p_{\varrho}^{\zeta}$ in equation (\ref{7.1.5}) are defined by
\begin{equation}\label{7.1.6}
p_{\varrho}^{\zeta}(\omega)=p_{\varrho_1}^{\zeta}(\omega)\ldots p_{\varrho_l}^{\zeta}(\omega),
\;\;\;
\varrho=\left(\varrho_1\geq\ldots\geq\varrho_l>0\right)\in\Y,
\end{equation}
and by
\begin{equation}\label{7.1.7}
p_k^{\zeta}(\omega)=p_k^{\zeta}(\alpha,\beta,\delta)
=\left\{
  \begin{array}{ll}
    \sum\limits_{i=1}^{\infty}\left(\alpha_{\zeta,i}^k+(-1)^{k-1}\beta_{\zeta,i}^k\right), & k\geq 2, \\
    \delta_{\zeta}, &  k=1.
  \end{array}
\right.
\end{equation}
\end{thm}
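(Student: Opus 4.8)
The plan is to prove Theorem \ref{THEOREM7.1.1} by reducing the description of harmonic functions on the branching graph $\Y(\widehat{G})$ to the classification of the characters of $S_\infty(G)$, for which the analogue of Thoma's theorem due to Hora and Hirai is available. The first step I would take is to make precise the correspondence between nonnegative normalized harmonic functions on $\Y(\widehat{G})$ and normalized characters of $S_\infty(G)$. Given a character $\chi$ of $S_\infty(G)$, one expands each restriction $\chi|_{S_n(G)}$ into irreducible characters; comparing with \ref{13.1.2} and \ref{6.5.2}, the coefficients define a function $\varphi$ on $\Y(\widehat{G})$ through $\chi|_{S_n(G)}=\sum_{\Lambda\in\Y_n(\widehat{G})}\varphi(\Lambda)\chi^{\Lambda}$. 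The branching rule of Theorem \ref{THEOREM4.5.1} shows that the compatibility of the restrictions $\chi|_{S_n(G)}$ under $\Res^{S_n(G)}_{S_{n-1}(G)}$ is precisely the harmonicity relation \ref{4.7.2}, while $\chi(e)=1$ matches $\varphi(\emptyset)=1$. Thus $\varphi\leftrightarrow\chi$ is a bijection, and the problem becomes the integral decomposition of an arbitrary character.

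The second step is to invoke the Hora--Hirai analogue of Thoma's theorem (\cite{HoraHirai}, Theorem 3.4), which asserts that the extreme characters of $S_\infty(G)$ are parameterized by the points $\omega\in\triangle$ of the generalized Thoma set through an explicit kernel $f_\omega$, and that every character admits a unique representation $\chi=\int_{\triangle}f_\omega\,dP(\omega)$ with $P$ a probability measure on $\triangle$. Under the bijection of the first step, each extreme character $f_\omega$ corresponds to an extreme harmonic function $\Lambda\mapsto\mathbb{K}(\Lambda,\omega)$, namely the coefficient of $\chi^{\Lambda}$ in the expansion of $f_\omega|_{S_n(G)}$. Interchanging the sum over $\Lambda$ with the integral over $\triangle$, which is justified by the nonnegativity of the summands via Tonelli's theorem, then yields $\varphi(\Lambda)=\int_{\triangle}\mathbb{K}(\Lambda,\omega)\,dP(\omega)$, with the uniqueness of $P$ inherited directly from the uniqueness in the character decomposition.

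The third and most substantial step is to identify the kernel $\mathbb{K}(\Lambda,\omega)$ explicitly as in \ref{7.1.4}. Since the irreducible characters $\chi^{\Lambda}$ are orthonormal with respect to $\mu_{S_n(G)}$, one has $\mathbb{K}(\Lambda,\omega)=\left<f_\omega,\chi^{\Lambda}\right>_{S_n(G)}$. The character $f_\omega$ is a central function whose value on a conjugacy class of type $\varrho$ is built from the supersymmetric power sums $p^{\zeta}_k(\omega)$ of \ref{7.1.6}--\ref{7.1.7}. This is exactly the setting of Theorem \ref{Theoremch1.3.2}: choosing the central functions $p_1,p_2,\dots$ on $G$ so that $\widehat{p}_r(\zeta)=p^{\zeta}_r(\omega)$ for all $r$ and $\zeta$, the function $\Psi$ of \ref{PSI} becomes $f_\omega$, and formula \ref{ch1.3.4} gives $\left<f_\omega,\chi^{\Lambda}\right>_{S_n(G)}=\prod_{\zeta\in\widehat{G}}\widehat{s}_{\Lambda(\zeta)}(\zeta)=\prod_{\zeta\in\widehat{G}}s^{\zeta}_{\Lambda(\zeta)}(\omega)$, the Schur function \ref{7.1.5} evaluated at the power sums. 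The normalization factors $(\dim\zeta)^{-|\Lambda(\zeta)|}$ appearing in \ref{7.1.4} arise from the dimension formula \ref{6.1.2} when passing between the kernel $f_\omega$ written on conjugacy classes and its expansion into the normalized irreducible characters $\chi^{\Lambda}/\DIM\Lambda$.

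I expect the main obstacle to be the careful bookkeeping in this third step: one must match the Hora--Hirai kernel $f_\omega$, whose precise normalization on the colored conjugacy classes involves both the power sums $p^{\zeta}_k(\omega)$ and the dimensions $\dim\zeta$, against the central-function inputs of Theorem \ref{Theoremch1.3.2}, and verify that the $(\dim\zeta)^{-|\Lambda(\zeta)|}$ factors emerge exactly as stated. A secondary technical point is confirming that the bijection of the first step and the interchange of summation and integration respect nonnegativity throughout, so that the simplex structure of the set of harmonic functions transports the uniqueness of $P$ from the character picture to the statement of Theorem \ref{THEOREM7.1.1}.
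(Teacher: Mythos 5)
Your proposal should first be measured against what the paper actually does: the paper does not prove Theorem \ref{THEOREM7.1.1} at all --- its proof is the single line ``See Hora and Hirai \cite{HoraHirai}, Theorem 3.1'', i.e., the statement is imported verbatim as Hora--Hirai's classification of (nonnegative, normalized) harmonic functions on the branching graph $\Y(\widehat{G})$. What you propose is a genuine derivation instead of a citation: transport the character-level Thoma theorem of the same source (\cite{HoraHirai}, Theorem 3.4, which the paper quotes as equation (\ref{ZcharacterRepresentation})) through the Vershik--Kerov dictionary between characters of $S_{\infty}(G)$ and nonnegative normalized harmonic functions, and then identify the kernel via Theorem \ref{Theoremch1.3.2}. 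Your first two steps are correct as stated: by Theorem \ref{THEOREM4.5.1}, consistency of the expansions $\chi\vert_{S_n(G)}=\sum_{\Lambda}\varphi(\Lambda)\chi^{\Lambda}$ under restriction is exactly the harmonicity relation (\ref{4.7.2}), nonnegativity makes each finite level positive-definite so the correspondence is a bijection, and uniqueness of $P$ transfers; the interchange of $\sum_{\Lambda}$ and $\int_{\triangle}$ is routine. Two caveats: the theorem (and your bijection) only makes sense for nonnegative harmonic functions, although the paper's definition allows complex values; and your argument is not independent of \cite{HoraHirai} --- the deep input is still the same classification, only in its character formulation, so what you are really doing is reconstructing the equivalence of Hora--Hirai's Theorems 3.1 and 3.4 by means of the paper's own machinery. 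That reconstruction has value (it ties the two quoted results together), but it does not remove the external input.

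The one concrete error is in your third step. You propose to take central functions $p_r$ on $G$ with $\widehat{p}_r(\zeta)=p_r^{\zeta}(\omega)$ and to recover the factors $(\dim\zeta)^{-|\Lambda(\zeta)|}$ of (\ref{7.1.4}) afterwards from the dimension formula (\ref{6.1.2}). Both halves of this are off. In the dictionary, $\mathbb{K}(\Lambda,\omega)$ is the coefficient of the \emph{unnormalized} character $\chi^{\Lambda}$ in $f_{\omega}\vert_{S_n(G)}$, so (\ref{6.1.2}) never enters; had you divided by $\DIM\Lambda$, you would pick up the factors $|\Lambda(\zeta)|!/\left(n!\,\dim\Lambda(\zeta)\right)$ as well, not just powers of $\dim\zeta$, and the bookkeeping would not close. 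The dimension factors are in fact already inside the Hora--Hirai kernel: a cycle of length $r$ and color $g$ contributes the factor $\sum_{\zeta}(\dim\zeta)^{-r}p_r^{\zeta}(\omega)\chi^{\pi^{\zeta}}(g)$ to $f_{\omega}$, so the correct choice in Theorem \ref{Theoremch1.3.2} is $\widehat{p}_r(\zeta)=(\dim\zeta)^{-r}p_r^{\zeta}(\omega)$, not $p_r^{\zeta}(\omega)$. Such central functions exist as the $L^{2}$-convergent sums $p_r=\sum_{\zeta}(\dim\zeta)^{-r}p_r^{\zeta}(\omega)\chi^{\pi^{\zeta}}$ (note $|p_r^{\zeta}(\omega)|\le\delta_{\zeta}$ and $\sum_{\zeta}\delta_{\zeta}=1$), and the proof of Theorem \ref{Theoremch1.3.2} uses only integrability of the $p_r$, not continuity, so it applies. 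Then formula (\ref{ch1.3.4}) combined with the homogeneity of the Schur function of degree $|\Lambda(\zeta)|$ gives $\left<f_{\omega},\chi^{\Lambda}\right>_{S_n(G)}=\prod_{\zeta\in\widehat{G}}(\dim\zeta)^{-|\Lambda(\zeta)|}s^{\zeta}_{\Lambda(\zeta)}(\omega)$, which is exactly (\ref{7.1.4}). With this correction your plan goes through.
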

\begin{proof}See Hora and Hirai \cite{HoraHirai}, Theorem 3.1.
\end{proof}
\begin{cor}For each $n=1,2,\ldots $ we have
\begin{equation}\label{7.2.2}
\varphi_z(\Lambda)=\frac{M_z^{(n)}(\Lambda)}{\DIM\Lambda}
=\int\limits_{\triangle}\mathbb{K}(\Lambda,\omega)dP_{z}(\omega), \;\Lambda\in\Y_n(G),
\end{equation}
where $\left(M_z^{(n)}\right)_{n=1}^{\infty}$ are the $z$-measures for wreath products $\left(S_n\left(G\right)\right)_{n=1}^{\infty}$
defined by equation (\ref{13.1.2}), and given explicitly by Theorem \ref{Theorem13.2.2}.
In equation (\ref{7.2.2}), $P_z$ is a probability measure on $\triangle$ corresponding to $\left(M_z^{(n)}\right)_{n=1}^{\infty}$.
\end{cor}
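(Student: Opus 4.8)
The plan is to obtain the statement as an immediate consequence of Theorems \ref{THEOREM6.5.2} and \ref{THEOREM7.1.1}, so there is essentially no new computation to perform; the entire content sits in those two results. First I would stress that $\varphi_z$ defined by (\ref{6.5.2}) is a single function on the whole graded graph $\Y(\widehat{G})=\bigsqcup_{n\geq 0}\Y_n(\widehat{G})$, not a separate function at each level $n$. The consistency of the $z$-measures $M_z^{(n)}$, equivalently the harmonicity established in Theorem \ref{THEOREM6.5.2}, is exactly what glues the values $M_z^{(n)}(\Lambda)/\DIM\Lambda$ together into one harmonic function on the branching graph $\Y(\widehat{G})$.

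Second, I would apply Theorem \ref{THEOREM7.1.1} with $\varphi=\varphi_z$. Since $\varphi_z$ is harmonic, that theorem produces a probability measure $P_z$ on the generalized Thoma set $\triangle$ with
\begin{equation*}
\varphi_z(\Lambda)=\int\limits_{\triangle}\mathbb{K}(\Lambda,\omega)\,dP_z(\omega)
\end{equation*}
for every $\Lambda\in\Y(\widehat{G})$. Specializing to $\Lambda\in\Y_n(\widehat{G})$ for a fixed $n$ yields precisely (\ref{7.2.2}); and because the identity holds for all $\Lambda$ simultaneously, a single measure $P_z$ represents $\varphi_z$ at every level $n$ at once, which is the point of the corollary.

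Finally, the uniqueness clause of Theorem \ref{THEOREM7.1.1} guarantees that $P_z$ is well-defined: it is the unique probability measure on $\triangle$ whose mixture of the kernels $\mathbb{K}(\Lambda,\cdot)$ reproduces $\varphi_z$, and hence encodes the whole family $\left(M_z^{(n)}\right)_{n=1}^{\infty}$. I do not anticipate any genuine obstacle here. The only substantive point to make explicit is that harmonicity (Theorem \ref{THEOREM6.5.2}) is what allows $\varphi_z$ to be viewed as a bona fide function on $\Y(\widehat{G})$, so that the boundary representation of Theorem \ref{THEOREM7.1.1} applies; everything else is a direct quotation of the two cited theorems.
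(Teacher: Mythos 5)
Your proposal is correct and is exactly the paper's (implicit) argument: the corollary is stated without proof precisely because it follows by combining Theorem \ref{THEOREM6.5.2} (harmonicity of $\varphi_z$ on $\Y(\widehat{G})$) with the boundary representation and uniqueness clause of Theorem \ref{THEOREM7.1.1}, then restricting to $\Lambda\in\Y_n(\widehat{G})$. Your additional remark that harmonicity is what glues the level-$n$ data $M_z^{(n)}(\Lambda)/\DIM\Lambda$ into a single function represented by one measure $P_z$ is exactly the intended content.
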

From equations (\ref{7.2.2}) and (\ref{13.1.2}) we obtain formula
(\ref{ZcharacterRepresentation}), which is an integral representation of the character
$\chi_z$ of the generalized regular representation $\left(T_z,L^2\left(\mathfrak{S}_G,
\mu_{\mathfrak{S}_G}^{\Ewens}\right)\right)$ of $S_{\infty}(G)\times S_{\infty}(G)$.
Equation (\ref{ZcharacterRepresentation}) can be understood as a spectral decomposition
of $\chi_z$.
Equation (\ref{7.1.3}) gives the explicit expression for $\triangle$. The formula for the kernel
$f_{\omega}(\varrho)$ in (\ref{ZcharacterRepresentation}) is known;
see Hora and Hirai \cite{HoraHirai}, Theorem 3.14. The problem we address in the following is to describe the measure $P_z$.

Let $\mathfrak{m}^{(n)}_{z}$ be the $z$-measure on $\Y_n$.  It is known that
the $z$-measures $\mathfrak{m}^{(n)}_{z}$ form a coherent system of probability
measures on the Young graph $\Y$, see, for example, Borodin and Olshanski \cite{BorodinOlshanskiMarkov}. This gives the following integral representation
\begin{equation}\label{13.2.2.new}
\frac{\mathfrak{m}^{(n)}_{z}(\lambda)}{\dim\lambda}
=\int\limits_{\Omega_{0}}\overset{\circ}{s}_{\lambda}(\omega)d\overset{\circ}{P}_z(\omega),
\end{equation}
where
\begin{equation}
\Omega_{0}=\left\{\omega=(\alpha,\beta);\;
\begin{array}{c}
  \alpha=(\alpha_1\geq\alpha_2\geq\ldots\geq 0) \\
  \beta=(\beta_1\geq\beta_2\geq\ldots\geq 0)
\end{array},
\;
\sum\limits_{i=1}^{\infty}(\alpha_i+\beta_i)=1\right\}.
\end{equation}
Here
\begin{equation}
\overset{\circ}{s}_{\lambda}(\omega)=\sum\limits_{\varrho\in\Y_n}
\frac{1}{z_{\varrho}}\chi^{\lambda}_{\varrho}\;\overset{\circ}{p}_{\varrho}(\omega),
\end{equation}
and the supersymmetric power sums $\overset{\circ}{p}_{\varrho}$  are defined by
\begin{equation}
\overset{\circ}{p}_{\varrho}(\omega)=\overset{\circ}{p}_{\varrho_1}(\omega)\ldots \overset{\circ}{p}_{\varrho_l}(\omega),
\;\;\;
\varrho=\left(\varrho_1\geq\ldots\geq\varrho_l>0\right)\in\Y,
\end{equation}
and by
\begin{equation}
\overset{\circ}{p}_k(\omega)=\overset{\circ}{p}_k(\alpha,\beta)
=\left\{
  \begin{array}{ll}
    \sum\limits_{i=1}^{\infty}\left(\alpha_{i}^k+(-1)^{k-1}\beta_{i}^k\right), & k\geq 2, \\
    1, &  k=1.
  \end{array}
\right.
\end{equation}
The measure $\overset{\circ}{P}_z$ is a probability measure on $\Omega_0$ called the spectral $z$-measure for $\left(\mathfrak{m}_z^{(n)}\right)_{n=1}^{\infty}$ and is associated with the generalized regular representation of the infinite bi-symmetric group
$S(\infty)\times S(\infty)$. If we consider $\left(\Omega_0,\overset{\circ}{P}_z\right)$
as a probability space, then $\alpha_i$, $\beta_i$ are random variables whose distribution is determined by $\overset{\circ}{P}_z$. The random variables
$\alpha_i$, $\beta_i$ were studied in detail by Borodin and Olshanski,
and the distribution of $\alpha_i$, $\beta_i$ is described explicitly in
Refs. \cite{Borodin1,Borodin2,BorodinOlshanskiLetters}.

Now, consider the probability space $(\triangle, P_z)$, where $\triangle$ is the generalized Thoma set defined by (\ref{7.1.3}), and $P_z$ is the probability measure
in the spectral decomposition of $\chi_z$, equation (\ref{ZcharacterRepresentation}).
Denote by $\tilde{\alpha}_{\zeta,i}$, $\tilde{\beta}_{\zeta,i}$, and $\tilde{\delta}_{\zeta}$ the coordinates of $\triangle$. These coordinates can be
viewed as random variables. The problem is to describe the distribution of $\tilde{\alpha}_{\zeta,i}$, $\tilde{\beta}_{\zeta,i}$, and $\tilde{\delta}_{\zeta}$,
and thus to describe $P_z$. In Theorem \ref{THEOREMSPECTRAL} we express
$\tilde{\alpha}_{\zeta,i}$, $\tilde{\beta}_{\zeta,i}$, and $\tilde{\delta}_{\zeta}$
in terms of random variables with known distribution.
\begin{thm}\label{THEOREMSPECTRAL}
Set
\begin{equation}
\nabla_{\widehat{G}}=\left\{\delta:\;\widehat{G}\longrightarrow [0,1],\;\;\sum\limits_{\zeta\in\widehat{G}}\delta(\zeta)=1\right\}.
\end{equation}
Assume that $\tau(\zeta)=\alpha(\zeta)\overline{\alpha}(\zeta),\;\zeta\in \widehat{G}$  (where $\alpha(\zeta)$ is defined by equation (\ref{13.2.2.3})) is such that on $\nabla_{\widehat{G}}$  there exists a probability distribution $D(\tau)$ with the density
$$
\frac{\Gamma(\sum\limits_{\zeta\in\widehat{G}}\tau(\zeta))}{\prod\limits_{\zeta\in\widehat{G}}\Gamma(\tau(\zeta))}
\prod\limits_{\zeta\in\widehat{G}}\left(\delta(\zeta)\right)^{\tau(\zeta)-1}
$$
with respect to the Lebesgue measure on $\nabla_{\widehat{G}}$.
In addition, assume that for each $\zeta$, $\zeta\in\widehat{G}$, the joint distribution of the random variables
$$
\alpha_{\zeta,1}\geq\alpha_{\zeta,2}\geq\ldots\geq 0,\;\;\beta_{\zeta,1}\geq\beta_{\zeta,2}\geq\ldots\geq 0
$$
is given by $\overset{\circ}{P}_{\alpha(\zeta)}$. Furthermore, assume that the joint distribution of
random variables $\left(\delta_\zeta\right)_{\zeta\in\widehat{G}}$ is given by $D(\tau)$.
Set
$$
\tilde{\alpha}_{\zeta}=\delta_{\zeta}\alpha_{\zeta},\;\;\tilde{\beta}_{\zeta}=\delta_{\zeta}\beta_{\zeta},\;\;\tilde{\delta}_{\zeta}=
\delta_{\zeta};\;\;\forall\zeta\in\widehat{G}.
$$
Then the joint distribution  of $(\tilde{\alpha}_{\zeta})_{\zeta\in\widehat{G}}$, $(\tilde{\beta}_{\zeta})_{\zeta\in\widehat{G}}$,
and $(\tilde{\delta}_{\zeta})_{\zeta\in\widehat{G}}$ is given by the spectral $z$-measure $P_z$.
\end{thm}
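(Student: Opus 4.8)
The plan is to exploit the uniqueness clause of Theorem \ref{THEOREM7.1.1}: the probability measure representing a given harmonic function via the kernel $\mathbb{K}$ is unique. Writing $\widetilde{P}_z$ for the law of the random variables $(\tilde{\alpha}_{\zeta},\tilde{\beta}_{\zeta},\tilde{\delta}_{\zeta})$ constructed in the statement, it therefore suffices to verify that
\begin{equation}\label{goal}
\varphi_z(\Lambda)=\int\limits_{\triangle}\mathbb{K}(\Lambda,\omega)\,d\widetilde{P}_z(\omega),\qquad \Lambda\in\Y_n(\widehat{G}),
\end{equation}
for every $n$; then $\widetilde{P}_z=P_z$ and the theorem follows from Corollary after Theorem \ref{THEOREM7.1.1} together with the uniqueness in \eqref{7.1.2}. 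First I would check that $\widetilde{P}_z$ is supported on $\triangle$: since each $(\alpha_\zeta,\beta_\zeta)$ lies in $\Omega_0$, one has $\sum_i(\tilde{\alpha}_{\zeta,i}+\tilde{\beta}_{\zeta,i})=\delta_\zeta\sum_i(\alpha_{\zeta,i}+\beta_{\zeta,i})=\delta_\zeta=\tilde{\delta}_\zeta$, while $\sum_\zeta\tilde{\delta}_\zeta=\sum_\zeta\delta_\zeta=1$, so the defining constraints of $\triangle$ in \eqref{7.1.3} hold (with equality in the middle one).

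The computational core is a homogeneity reduction of the kernel. Evaluating the supersymmetric power sums \eqref{7.1.7} at the scaled point $\tilde{\omega}$, I would observe that for every $k\geq 1$ and every $\zeta$,
\begin{equation}\label{homog}
p_k^{\zeta}(\tilde{\omega})=\delta_\zeta^{\,k}\,\overset{\circ}{p}_k(\alpha_\zeta,\beta_\zeta),
\end{equation}
the case $k=1$ using $\overset{\circ}{p}_1=1$ on $\Omega_0$ and $p_1^{\zeta}(\tilde{\omega})=\tilde{\delta}_\zeta=\delta_\zeta$, and the cases $k\geq 2$ following by pulling $\delta_\zeta^{\,k}$ out of the sum. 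Because the Schur function $s^{\zeta}_{\Lambda(\zeta)}$ in \eqref{7.1.5} is isobaric of weighted degree $|\Lambda(\zeta)|$ in the power sums (each monomial $p_{\varrho}^{\zeta}$ with $\varrho\in\Y_{|\Lambda(\zeta)|}$ has total degree $|\Lambda(\zeta)|$), the uniform scaling \eqref{homog} yields $s^{\zeta}_{\Lambda(\zeta)}(\tilde{\omega})=\delta_\zeta^{\,|\Lambda(\zeta)|}\,\overset{\circ}{s}_{\Lambda(\zeta)}(\alpha_\zeta,\beta_\zeta)$, so that by \eqref{7.1.4}
\begin{equation}\label{kernelred}
\mathbb{K}(\Lambda,\tilde{\omega})=\prod\limits_{\zeta\in\widehat{G}}\frac{\delta_\zeta^{\,|\Lambda(\zeta)|}}{(\dim\zeta)^{|\Lambda(\zeta)|}}\,\overset{\circ}{s}_{\Lambda(\zeta)}(\alpha_\zeta,\beta_\zeta).
\end{equation}
Only finitely many factors differ from $1$ (those with $\Lambda(\zeta)\neq\emptyset$), so no convergence issue arises in the integrand.

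Next I would take the expectation in \eqref{kernelred}, using that the families $(\alpha_\zeta,\beta_\zeta)_\zeta$ are mutually independent and independent of $(\delta_\zeta)_\zeta$. Conditioning on $(\delta_\zeta)$ and integrating each factor against $\overset{\circ}{P}_{\alpha(\zeta)}$, the integral representation \eqref{13.2.2.new} for the symmetric-group spectral measures gives $\E[\overset{\circ}{s}_{\Lambda(\zeta)}(\alpha_\zeta,\beta_\zeta)]=\mathfrak{m}^{(|\Lambda(\zeta)|)}_{\alpha(\zeta)}(\Lambda(\zeta))/\dim\Lambda(\zeta)$. It then remains to average $\prod_\zeta\delta_\zeta^{\,|\Lambda(\zeta)|}$ against $D(\tau)$ with $\tau(\zeta)=\alpha(\zeta)\overline{\alpha(\zeta)}$. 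The Dirichlet mixed-moment formula yields
\begin{equation}\label{dir}
\E_{D(\tau)}\Bigl[\prod\limits_{\zeta}\delta_\zeta^{\,|\Lambda(\zeta)|}\Bigr]=\frac{\prod_{\zeta}\bigl(\alpha(\zeta)\overline{\alpha(\zeta)}\bigr)_{|\Lambda(\zeta)|}}{\bigl(\sum_\zeta\alpha(\zeta)\overline{\alpha(\zeta)}\bigr)_n},
\end{equation}
and here the Parseval identity \eqref{Parseval} identifies $\sum_\zeta\alpha(\zeta)\overline{\alpha(\zeta)}=I$, turning the denominator into $(I)_n$. Assembling these three computations reproduces precisely the expression \eqref{6.5.3.1} for $\varphi_z(\Lambda)$, which establishes \eqref{goal} and hence, by uniqueness, that $\widetilde{P}_z=P_z$.

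The step I expect to require the most care is the moment computation \eqref{dir} when $\widehat{G}$ is infinite: the law $D(\tau)$ then lives on an infinite-dimensional simplex, and its existence is a standing hypothesis of the theorem, so I would justify \eqref{dir} by reducing to the finitely-supported marginals in the coordinates $\zeta$ with $\Lambda(\zeta)\neq\emptyset$ (which suffices since $|\Lambda(\zeta)|=0$ for all but finitely many $\zeta$), rather than by invoking the infinite-dimensional density directly. The homogeneity reduction \eqref{homog}--\eqref{kernelred} and the appeal to Parseval are then routine; the only further point worth stating explicitly is the legitimacy of interchanging expectation with the finite product over those $\zeta$, which is immediate by independence.
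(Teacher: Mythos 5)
Your proposal is correct and takes essentially the same route as the paper's own proof: both arguments verify that the law of $(\tilde{\alpha}_{\zeta},\tilde{\beta}_{\zeta},\tilde{\delta}_{\zeta})$ satisfies the integral representation (\ref{7.2.2}) by combining the homogeneity of the Schur functions under scaling by $\delta_{\zeta}$, the symmetric-group spectral representation (\ref{13.2.2.new}), and the Dirichlet moment identity (\ref{7.7.1}) (where the Parseval identity (\ref{Parseval}) produces $(I)_n$), and then conclude via the uniqueness of the representing measure in Theorem \ref{THEOREM7.1.1}. Your explicit checks of the support condition on $\triangle$ and of the reduction of the Dirichlet moments to finite-dimensional marginals are minor refinements of, not departures from, the paper's argument.
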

\begin{proof}
Set
$$
\overset{\circ}{\varphi}_{\alpha(\zeta)}(\Lambda(\zeta))=\frac{\mathfrak{m}_{\alpha(\zeta)}(\Lambda(\zeta))}{\dim\Lambda(\zeta)}.
$$
The harmonic function $\varphi_z$ (defined by equation (\ref{6.5.2})) can be written as
\begin{equation}
\varphi_z(\Lambda)=\frac{1}{(I)_n}
\prod\limits_{\zeta\in\widehat{G}}
\frac{(\alpha(\zeta)\overline{\alpha(\zeta)})_{|\Lambda(\zeta)|}}{(\dim\zeta)^{|\Lambda(\zeta)|}}
\overset{\circ}{\varphi}_{\alpha(\zeta)}(\Lambda(\zeta)),
\end{equation}
as it follows from (\ref{6.5.3.1}). Using (\ref{13.2.2.new}) we find
\begin{equation}\label{7.4.3}
\varphi_z(\Lambda)=
\frac{1}{(I)_n}
\prod\limits_{\zeta\in\widehat{G}}\left[\int\limits_{\Omega_0(\zeta)}
\frac{(\alpha(\zeta)\overline{\alpha(\zeta)})_{|\Lambda(\zeta)|}}{(\dim\zeta)^{|\Lambda(\zeta)|}}
\;\overset{\circ}{s}_{\Lambda(\zeta)}(\omega_{\zeta})d\overset{\circ}{P}_{\alpha(\zeta)}(\omega_{\zeta})\right],
\end{equation}
where
\begin{equation}
\Omega_{0}(\zeta)=\left\{\omega_{\zeta}=(\alpha_{\zeta},\beta_{\zeta});\;
\begin{array}{c}
  \alpha_{\zeta}=(\alpha_{\zeta,1}\geq\alpha_{\zeta,2}\geq\ldots\geq 0) \\
  \beta_{\zeta}=(\beta_{\zeta,1}\geq\beta_{\zeta,2}\geq\ldots\geq 0)
\end{array},
\;
\sum\limits_{i=1}^{\infty}(\alpha_{\zeta,i}+\beta_{\zeta,i})=1\right\}.
\nonumber
\end{equation}
Under the assumptions in the statement of Theorem \ref{THEOREMSPECTRAL} we can write
\begin{equation}\label{7.7.1}
\frac{1}{(I)_n}
\prod\limits_{\zeta\in\widehat{G}}
(\alpha(\zeta)\overline{\alpha(\zeta)})_{|\Lambda(\zeta)|}
=\underset{\nabla_{\widehat{G}}}{\int}\prod\limits_{\zeta\in\widehat{G}}\left(\delta(\zeta)\right)^{|\Lambda(\zeta)|}
D(\tau)\left(\prod\limits_{\zeta\in\widehat{G}}d\delta(\zeta)\right),
\end{equation}
where $\tau(\zeta)=\alpha(\zeta)\overline{\alpha(\zeta)}$, $\zeta\in\widehat{G}$. Equations (\ref{7.1.4}), (\ref{7.2.2}), (\ref{7.4.3}), and (\ref{7.7.1}) together with the fact that the Schur functions  are homogeneous give us the following condition
\begin{equation}\label{7.8.1}
\begin{split}
&\int\limits_{\nabla}\left[\prod\limits_{\zeta\in\widehat{G}}\;\;\int\limits_{\Omega_0(\zeta)}
\;\overset{\circ}{s}_{\Lambda(\zeta)}(\delta(\zeta)\omega_{\zeta})d\overset{\circ}{P}_{\alpha(\zeta)}(\omega_{\zeta})\right]D(\tau)\left(\prod\limits_{\zeta\in\widehat{G}}d\delta(\zeta)\right)\\
&=\int\limits_{\triangle}\left(\prod\limits_{\zeta\in\widehat{G}}\overset{\circ}{s}_{\Lambda(\zeta)}
(\tilde{\omega}_{\zeta})\right)dP_z(\tilde{\omega}_{\zeta}).
\end{split}
\end{equation}
Now, the statement of Theorem \ref{THEOREMSPECTRAL} follows from equation (\ref{7.8.1}), and from the fact that
$P_z$ is a unique probability measure on $\triangle$, for which equation (\ref{7.2.2}) is satisfied.
\end{proof}


\begin{thebibliography}{99}
\bibitem{Borodin1}
 Borodin, A. M. Characters of symmetric groups, and correlation functions of point processes. (Russian) Funktsional. Anal. i Prilozhen. 34 (2000), no. 1, 12–-28, 96; translation in Funct. Anal. Appl. 34 (2000), no. 1, 10–-23.
 \bibitem{Borodin2}
  Borodin, A. M. Harmonic analysis on the infinite symmetric group, and the Whittaker kernel. (Russian) Algebra i Analiz 12 (2000), no. 5, 28–-63; translation in St. Petersburg Math. J. 12 (2001), no. 5, 733
  -–759
\bibitem{BorodinOlshanskiLetters} Borodin, A.; Olshanski, G. Point processes and the infinite symmetric group. Math. Res. Lett. 5 (1998), 799--816.
\bibitem{BorodinOlshanskiRSK} Borodin, A.; Olshanski, G. z-measures on partitions, Robinson-Schensted-Knuth correspondence, and $\beta=2$ random matrix ensembles. Random matrix models and their applications, 71–-94, Math. Sci. Res. Inst. Publ., 40, Cambridge Univ. Press, Cambridge, 2001.
\bibitem{BorodinOlshanskiKernel} Borodin, A.; Olshanski, G. Distributions on partitions, point processes, and the hypergeometric kernel. Comm. Math. Phys. 211 (2000), no. 2, 335–-358.
%\bibitem{BorodinOlshanskiErgodic} Borodin, A.; Olshanski, G. Infinite random matrices and ergodic measures. Comm. Math. Phys. 223 (2001), no. 1, 87–123
\bibitem{BorodinOlshanskiUnitary} Borodin, A.; Olshanski, G. Harmonic analysis on the infinite-dimensional unitary group and determinantal point processes. Ann. of Math. (2) 161 (2005), no. 3, 1319–-1422.
\bibitem{BorodinOlshanskiMarkov}Borodin, A.; Olshanski, G. Markov processes on partitions. Probab. Theory Related Fields 135 (2006), no. 1, 84–-152.
\bibitem{BorodinOlshanskiBook}Borodin, A.; Olshanski, G. Representations of the infinite symmetric group. Cambridge Studies in Advanced Mathematics, 160. Cambridge University Press, Cambridge, 2017.
%\bibitem{BorodinStrahov}Borodin, A.; Strahov, E. Correlation kernels for discrete symplectic and orthogonal ensembles. Comm. Math. Phys. 286 (2009), no. 3, 933–-977.
\bibitem{BorodinOlshanskiStrahov} Borodin, A.; Olshanski, G.; Strahov, E. Giambelli compatible point processes. Adv. in Appl. Math. 37 (2006), no. 2, 209–-248.
%\bibitem{Boyer}Boyer, R. Character theory of infinite wreath products. Int. J. Math. Math. Sci. 2005, no. 9, 1365–-1379
%\bibitem{BufetovGorin}Bufetov, A.; Gorin, V. Stochastic monotonicity in Young graph and Thoma theorem. Int. Math. Res. Not. IMRN 2015, no. 23, 12920–-12940.
%\bibitem{CuencaGorin} Cuenca, C.; Gorin, V. q-deformed character theory for infinite-dimensional symplectic and orthogonal groups. Selecta Math. (N.S.) 26 (2020), no. 3, Paper No. 40, 55 pp.
 \bibitem{CuencaOlshanski1}
 Cuenca, C.; Olshanski, G. Infinite-dimensional groups over finite fields and Hall-Littlewood symmetric functions. Adv. Math. 395 (2022), Paper No. 108087.
 %\bibitem{Gorin} Gorin, V. The q-Gelfand-Tsetlin graph, Gibbs measures and q-Toeplitz matrices. Adv. Math. 229 (2012), no. 1, 201-–266.
\bibitem{GorinKerovVershik} Gorin, V.; Kerov, S.; Vershik, A. Finite traces and representations of the group of infinite matrices over a finite field. Adv. Math. 254 (2014), 331–-395.
 %\bibitem{GorinOlshanski} Gorin, V.; Olshanski, G. A quantization of the harmonic analysis on the infinite-dimensional unitary group. J. Funct. Anal. 270 (2016), no. 1, 375–-418.
\bibitem{HiraiHiraiHoraI}
 Hirai, T.; Hirai, E.; Hora, A. Limits of characters of wreath products $\mathfrak{S}_n(T)$ of a compact group $T$ with the symmetric groups and characters of $\mathfrak{S}_{\infty}(T)$. I. Nagoya Math. J. 193 (2009), 1–-93.
 \bibitem{HoraLectures}
 Hora, A.  Lectures on "Introduction to Asymptotic Theory for Representations and Characters of Symmetric Groups" given at Wroclaw University in April -- June, 2007. Available online
 https://www.math.sci.hokudai.ac.jp/~hora/Wroclaw-LectureNote-ver27feb08.pdf.
\bibitem{HoraHiraiHiraiII} Hora, A.; Hirai, T.; Hirai, E. Limits of characters of wreath products $\mathfrak{S}_n(T)$ of a compact group $T$ with the symmetric groups and characters of
$\mathfrak{S}_{\infty}(T)$. II. From a viewpoint of probability theory. J. Math. Soc. Japan 60 (2008), no. 4, 1187–-1217.
\bibitem{HoraHirai}
 Hora, A.; Hirai, T. Harmonic functions on the branching graph associated with the infinite wreath product of a compact group. Kyoto J. Math. 54 (2014), no. 4, 775–-817.

%\bibitem{KerovJack} Kerov, S. V. Anisotropic Young diagrams and Jack symmetric functions. Functional analysis and its applications 34, no. 1 (2000), 45--51.

%\bibitem{KerovOkounkovOlshanski} Kerov, S.; Okounkov, A.; Olshanski, G. The boundary of the Young graph with Jack edge multiplicities.
Internat. Math. Res. Notices , no. 4, (1998), 173–-199.
 \bibitem{KerovDissertation} Kerov, S. V. Asymptotic representation theory of the symmetric group and its applications in analysis. Translated from the Russian manuscript by N. V. Tsilevich. With a foreword by A. Vershik and comments by G. Olshanski. Translations of Mathematical Monographs, 219. American Mathematical Society, Providence, RI, 2003.

\bibitem{KerovOlshanskiVershikAnnouncement}
Kerov, S.; Olshanski, G.; Vershik, A. Harmonic analysis on the infinite symmetric group. A deformation of the regular representation. C. R. Acad. Sci. Paris Sér. I Math. 316 (1993), no. 8, 773–-778.
\bibitem{KerovOlshanskiVershik}
Kerov, S.; Olshanski, G.; Vershik, A. Harmonic analysis on the infinite symmetric group. Invent. Math. 158 (2004), no. 3, 551–-642.
\bibitem{Kingman1} Kingman, J. F. C. Random partitions in population genetics. Proc. R. Soc. London A 361, (1978), 1--20.
\bibitem{Kingman2}Kingman, J. F. C. The representation of partition structures. J. London Math. Soc. 18 (1978), 374--380.
\bibitem{Macdonald} Macdonald, I. Symmetric Functions and Hall Polynomials. Oxford Mathematical Monographs. Oxford University Press, second edition, 1995.
%\bibitem{OkounkovThomaTheorem}  Okounkov, A. Yu. Thoma's theorem and representations of an infinite bisymmetric group. (Russian) Funktsional. Anal. i Prilozhen. 28 (1994), no. 2, 31–-40, 95; translation in Funct. Anal. Appl. 28 (1994), no. 2, 100–-107.
%\bibitem{Okounkov} Okounkov, A.: $SL(2)$ and $z$-measures. In: Random matrix models and their applications. Math. Sci. Res. Inst. Publ., vol. 40, pp. 407–420. Cambridge Univ. Press, Cambridge (2001) MR1842795.
\bibitem{OkounkovSchur}Okounkov, A.  Infinite wedge and random partitions.
Selecta Math. (N.S.) 7 (2001) 57--81.
%\bibitem{Olshanski} Olshanski, G. Unitary representations of infinite-dimensional pairs (G,K) and the formalism of R. Howe. Representation of Lie groups and related topics, 269–463, Adv. Stud. Contemp. Math., 7, Gordon and Breach, New York, 1990.
 \bibitem{OlshanskiPointProcesses}Olshanski, G. Point processes related to the infinite symmetric group. The orbit method in geometry and physics (Marseille, 2000), 349–-393, Progr. Math., 213, Birkhäuser Boston, Boston, MA, 2003.
 \bibitem{OlshanskiUnitary} Olshanski, G. The problem of harmonic analysis on the infinite-dimensional unitary group. J. Funct. Anal. 205 (2003), no. 2, 464–-524.
 \bibitem{OlshanskiNato} Olshanski, G. An introduction to harmonic analysis on the infinite symmetric group. Asymptotic combinatorics with applications to mathematical physics (St. Petersburg, 2001), 127–-160, Lecture Notes in Math., 1815, Springer, Berlin, 2003.
 %\bibitem{OlshanskiRandomPermutations}  Olshanski, G. Random permutations and related topics. The Oxford handbook of random matrix theory, 510–533, Oxford Univ. Press, Oxford, 2011.
 %\bibitem{StrahovMatrixKernels}  Strahov, E. Matrix kernels for measures on partitions. J. Stat. Phys. 133 (2008), no. 5, 899–-919.
 %\bibitem{StrahovSH} Strahov, E. Z-measures on partitions related to the infinite Gelfand pair $(S(2\infty),H(\infty))$. J. Algebra 323 (2010), no. 2, 349–-370.
%\bibitem{StrahovPfaffian} Strahov, E. The z-measures on partitions, Pfaffian point processes, and the matrix hypergeometric kernel. Adv. Math. 224 (2010), no. 1, 130–-168.
\bibitem{StrahovMPS}
 Strahov, E. Multiple partition structures and harmonic functions on branching graphs.
Advances in Applied Mathematics 153 (2024), Article no. 102617.
 \bibitem{StrahovIsr25}
 Strahov, E. Generalized regular representations of big wreath products. Israel. J. Math. (2025)
 (in press).
 \bibitem{StrahovRefinement}
 Strahov, E. A refinement of the Ewens sampling formula. arXiv:2403.05077.
 \bibitem{Tavare} Tavar$\acute{\mbox{e}}$, S.  The magical Ewens sampling formula.
Bull. London Math. Soc. 53 (2021), 1563--1582.
 \bibitem{Thoma}
Thoma, E. Die unzerlegbaren, positiv-definiten Klassenfunktionen der abzählbar unendlichen, symmetrischen Gruppe. (German) Math. Z. 85 (1964), 40–-61.
\bibitem{VershikKerov1}
 Vershik, A. M.; Kerov, S. V. Asymptotic theory of the characters of a symmetric group. (Russian) Funktsional. Anal. i Prilozhen. 15 (1981), no. 4, 15–-27.
%\bibitem{VershikTsilevich} Vershik, A. M.; Tsilevich, N. V.; The Schur-Weyl graph and Thoma's theorem. Funct. Anal. Appl. 55 (2021), no. 3, 198–-209.
\end{thebibliography}
\end{document}